\documentclass[10pt]{article}

\usepackage{amsmath,amsthm}
\usepackage{amssymb}
\usepackage{color}
\usepackage{breqn}
\usepackage{enumerate}
\usepackage{graphicx}
\usepackage{bbm}
\usepackage[affil-it]{authblk}
\usepackage{tabu}

\newtheorem{theorem}{Theorem}[section]
\newtheorem{corollary}[theorem]{Corollary}
\newtheorem{lemma}[theorem]{Lemma}
\newtheorem{proposition}[theorem]{Proposition}
\newtheorem{question}[theorem]{Question}

\newtheorem{conjecture}[theorem]{Conjecture}

\newtheorem{observation}[theorem]{Observation}
\theoremstyle{definition}

\addtolength{\oddsidemargin}{-.875in}
\addtolength{\evensidemargin}{-.875in}
\addtolength{\textwidth}{1.75in}
\addtolength{\topmargin}{-.875in}
\addtolength{\textheight}{1.75in}
	
\makeatletter
\def\blfootnote{\gdef\@thefnmark{}\@footnotetext}
\makeatother	
	
\begin{document}

\date{}
\title{Small Sets with Large Difference Sets}
\author{Luka Mili\'{c}evi\'{c}
\thanks{E-mail address: \texttt{lm497@cam.ac.uk}}}
\affil{Department of Pure Mathematics and Mathematical Statistics\\Wilberforce
Road\\Cambridge CB3 0WB\\UK}

\maketitle

\blfootnote{\textup{2010} \textit{Mathematics Subject Classification}: 11B13; 11P99}

\abstract{For every $\epsilon > 0$ and $k \in \mathbb{N}$, Haight constructed a set $A \subset \mathbb{Z}_N$ ($\mathbb{Z}_N$ stands for the integers modulo $N$) for a suitable $N$, such that $A-A = \mathbb{Z}_N$ and $|kA| < \epsilon N$. Recently, Nathanson posed the problem of constructing sets $A \subset \mathbb{Z}_N$ for given polynomials $p$ and $q$, such that $p(A) = \mathbb{Z}_N$ and $|q(A)| < \epsilon N$, where $p(A)$ is the set $\{p(a_1, a_2, \dots, a_n)\colon a_1, a_2, \dots, a_n \in A\}$, when $p$ has $n$ variables. In this paper, we give a partial answer to Nathanson's question. For every $k \in \mathbb{N}$ and $\epsilon > 0$, we find a set $A \subset \mathbb{Z}_N$ for suitable $N$, such that $A- A = \mathbb{Z}_N$, but $|A^2 + kA| < \epsilon N$, where $A^2 + kA = \{a_1a_2 + b_1 + b_2 + \dots + b_k\hspace{2pt}\colon\vspace{2pt} a_1, a_2,b_1, \dots, b_k \in A\}$. We also extend this result to  construct, for every $k \in \mathbb{N}$ and $\epsilon > 0$, a set $A \subset \mathbb{Z}_N$ for suitable $N$, such that $A- A = \mathbb{Z}_N$, but $|3A^2 + kA| < \epsilon N$, where $3A^2 + kA = \{a_1a_2 + a_3a_4 + a_5a_6 + b_1 + b_2 + \dots + b_k\hspace{2pt}\colon\vspace{2pt} a_1, \dots, a_6,b_1, \dots, b_k \in A\}$.}

\section{Introduction}
The problem of comparing different expressions involving the same subset $A$ of an abelian group $G$ (e.g. $A+A$ and $A-A$) is one of the central topics in additive combinatorics. For example, one of the starting points in the study of this field is the Pl\"{u}nnecke-Ruzsa inequality that bounds $|kA - lA|$ in terms of $|A|$ and $|A+A|$.

\begin{theorem}(Pl\"{u}nnecke-Ruzsa inequality,~\cite{Plunnecke},~\cite{RuzsaIneq}) Let $A$ be a subset of an abelian group. Then, for any $k,l \geq 1$ we have
$$|kA - lA| |A|^{k+l - 1} \leq |A+A|^{k+l}.$$\end{theorem}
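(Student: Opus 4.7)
The plan is to follow Petridis's short 2011 approach, which reduces the Plünnecke-Ruzsa inequality to two ingredients: Ruzsa's triangle inequality and a combinatorial lemma due to Petridis that controls iterated sum-sets. Setting $K = |A+A|/|A|$, the stated inequality is equivalent to $|kA - lA| \leq K^{k+l} |A|$, so I would aim to prove this cleaner bound and then multiply through by $|A|^{k+l-1}$ at the end.

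For the first ingredient, I would establish Ruzsa's triangle inequality
\[
|Y| \cdot |V - W| \leq |Y + V| \cdot |Y + W|
\]
for any finite sets $Y, V, W$ in an abelian group, via a direct injection: for each $d \in V - W$ fix a representation $d = v_d - w_d$, and send $(y, d) \mapsto (y + v_d, y + w_d)$; subtracting the two coordinates recovers $d$, which then recovers $v_d$ and hence $y$.

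The second ingredient is Petridis's lemma: choose a nonempty $X \subseteq A$ minimizing the ratio $|X + A|/|X|$ and call this minimum $K'$ (so $K' \leq K$); then for every finite set $C$,
\[
|X + A + C| \leq K' \cdot |X + C|.
\]
I would prove this by induction on $|C|$. The base case $|C| = 1$ is immediate. For the inductive step, given $C = C' \cup \{c\}$ with $c \notin C'$, I would partition $X + A + C$ into the part already contained in $X + A + C'$ and the genuinely new contribution coming from $c$, and bound the new part by applying the minimality of $X$ to a carefully chosen subset $X' \subseteq X$ consisting of those $x \in X$ whose translates escape $X + A + C' - c$. This inductive decomposition is the delicate heart of the argument and is the step I expect to be the main obstacle.

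Granted the lemma, a straightforward induction on $j$ with $C = (j-1) A$ produces $|X + jA| \leq K'^{\,j} |X|$ for all $j \geq 0$. Applying this with $j = k$ and $j = l$ and combining with Ruzsa's triangle inequality gives
\[
|X| \cdot |kA - lA| \leq |X + kA| \cdot |X + lA| \leq K'^{\,k+l} |X|^{2},
\]
so $|kA - lA| \leq K'^{\,k+l} |X| \leq K^{k+l} |A|$. Multiplying both sides by $|A|^{k+l-1}$ converts the right-hand side into $|A+A|^{k+l}$, which is precisely the desired inequality.
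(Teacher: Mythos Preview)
The paper does not give its own proof of this theorem: it is stated purely as background, with citations to Pl\"unnecke and Ruzsa, and no argument is supplied. So there is nothing in the paper to compare your proposal against.

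That said, your proposal is the correct modern route. The Petridis lemma as you state it (choose $X\subseteq A$ minimizing $|X+A|/|X|$, then $|X+A+C|\le K'|X+C|$ for all finite $C$) together with Ruzsa's triangle inequality does yield $|kA-lA|\le K'^{\,k+l}|X|\le K^{k+l}|A|$, which is exactly the claimed bound after clearing denominators. Your sketch of the inductive step is slightly imprecise in the description of the auxiliary subset: the set that actually drives the argument is $Z=\{x\in X:\{x\}+A+c\subseteq X+A+C'\}$, and one uses both the minimality bound $|Z+A|\ge K'|Z|$ and the containment $\{x\in X:x+c\in X+C'\}\subseteq Z$ to close the induction. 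With that correction the argument goes through as you outline.
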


To illustrate the difficulties in determining the right bounds for such inequalities, we note that even for the comparison of $|A+A|$ and $|A-A|$ the right exponents are not known. In fact, the best known lower bounds for $|A+A|$ in terms of $|A-A|$ have not changed for more than 40 years.

\begin{theorem}[Freiman, Pigaev; Ruzsa, \cite{FreimanBounds}, \cite{RuzsaBounds}] Let $A$ be a subset of an abelian group. Then $|A-A|^{3/4} \leq |A+A|$. \end{theorem}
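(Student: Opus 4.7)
The plan is to derive the bound from the Pl\"{u}nnecke-Ruzsa inequality applied in the simplest case $k=l=1$, combined with the elementary observation that the difference set $A-A$ is parametrised by pairs in $A \times A$.

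First I would invoke Theorem 1.1 with $k = l = 1$, which gives directly
\[ |A-A| \cdot |A| \leq |A+A|^2. \]
The second ingredient is the trivial counting bound $|A-A| \leq |A|^2$, valid because every element of $A-A$ is of the form $a - b$ for some $(a,b) \in A \times A$. Rearranging, this reads $|A| \geq |A-A|^{1/2}$.

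Substituting this lower bound on $|A|$ into the Pl\"{u}nnecke-Ruzsa estimate yields
\[ |A-A|^{3/2} = |A-A| \cdot |A-A|^{1/2} \leq |A-A| \cdot |A| \leq |A+A|^2, \]
and taking fourth roots gives $|A-A|^{3/4} \leq |A+A|$, as required.

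There is no substantial obstacle here: the argument is essentially two lines once Pl\"{u}nnecke-Ruzsa is in hand. The only mildly subtle point is recognising \emph{why} the trivial bound $|A-A| \leq |A|^2$ is the right auxiliary estimate — it is precisely what rescues the argument in the regime where $|A|$ is much smaller than $|A-A|$, i.e.\ when Pl\"{u}nnecke-Ruzsa with $k=l=1$ by itself would be too weak. Everything else is algebraic rearrangement.
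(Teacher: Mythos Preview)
Your argument is correct apart from a verbal slip: from $|A-A|^{3/2} \leq |A+A|^2$ you want to take \emph{square} roots, not fourth roots, to reach $|A-A|^{3/4} \leq |A+A|$. The mathematics is fine.

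Note, however, that the paper does not actually supply a proof of this theorem; it is quoted as a known result with references to Freiman--Pigaev and Ruzsa. So there is no ``paper's own proof'' to compare against. It is worth remarking that the original arguments of Freiman--Pigaev and Ruzsa were direct and did not proceed via the Pl\"{u}nnecke--Ruzsa inequality (indeed, Ruzsa's clean formulation of Theorem~1.1 came later). Your route is therefore not the historical one, but since the paper lists Theorem~1.1 before Theorem~1.2 and treats both as background, deducing the latter from the former is entirely legitimate and arguably the cleanest way to see the bound today.
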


In the opposite direction, the best known lower bound is given by the following result.

\begin{theorem}[Hennecart, Robert, Yudin, \cite{LowerBounds}] There exist arbitrarily large sets $A \subset \mathbb{Z}$ such that $|A+A| \leq |A-A| ^ {\alpha+ o(1)}$, where $\alpha\colon= \log(2)/\log(1 +\sqrt{2}) \approx 0.7864$. \end{theorem}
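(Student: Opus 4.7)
The plan is to reduce the problem to constructing a single ``base'' set $A_0$ with a favourable ratio $\log|A_0+A_0|/\log|A_0-A_0|$, and then amplify by a dilated direct sum. Suppose a finite set $A_0 \subset \mathbb{Z}$ satisfies $|A_0+A_0|=s$ and $|A_0-A_0|=d$. Taking $M$ larger than $2(\max A_0 - \min A_0)$ and setting
\[
B_k = \{a_0 + M a_1 + M^2 a_2 + \dots + M^{k-1} a_{k-1} : a_0,\dots,a_{k-1} \in A_0\},
\]
each element of $B_k \pm B_k$ has a unique base-$M$ expansion with digits in $A_0 \pm A_0$, so $|B_k+B_k|=s^k$ and $|B_k-B_k|=d^k$. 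Consequently $|B_k+B_k| = |B_k-B_k|^{\log s/\log d}$, and letting $k \to \infty$ produces arbitrarily large sets realising precisely this exponent.

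It therefore suffices to construct, for every $\varepsilon>0$, a base set $A_0$ with $\log|A_0+A_0|/\log|A_0-A_0| < \alpha + \varepsilon$, where $\alpha = \log 2 / \log(1+\sqrt{2})$. The appearance of the silver ratio $1+\sqrt{2}$ suggests building $A_0$ by an iterative doubling modelled on the Pell recurrence $u_{n+1} = 2u_n + u_{n-1}$, whose dominant root is exactly $1+\sqrt{2}$. Starting from a small seed and inductively setting $A^{(n)} = A^{(n-1)} \cup (A^{(n-1)}+t_n)$, one would pick each shift $t_n$ so that (i) the three blocks $A^{(n-1)}+A^{(n-1)}$, $A^{(n-1)}+A^{(n-1)}+t_n$ and $A^{(n-1)}+A^{(n-1)}+2t_n$ overlap heavily enough to keep $|A^{(n)}+A^{(n)}|$ close to $2\,|A^{(n-1)}+A^{(n-1)}|$, while (ii) the three blocks $A^{(n-1)}-A^{(n-1)}$ and $(A^{(n-1)}-A^{(n-1)})\pm t_n$ are nearly disjoint, forcing $|A^{(n)}-A^{(n)}|$ to follow (asymptotically) $2\,|A^{(n-1)}-A^{(n-1)}| + |A^{(n-2)}-A^{(n-2)}|$. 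Solving these recurrences yields $|A^{(n)}+A^{(n)}| \sim C\cdot 2^n$ and $|A^{(n)}-A^{(n)}| \sim C'\cdot (1+\sqrt{2})^n$, hence the required ratio in the limit.

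The hard part will be the simultaneous calibration of the shifts $t_n$ across every scale of the iteration. The two demands point in opposite directions: sum-set overlap requires $t_n$ to land in a small prescribed set of ``aligning'' values, while difference-set disjointness forbids $t_n$ from a larger set of ``colliding'' values. One must carry through the induction not just the quantities $|A^{(n)} \pm A^{(n)}|$ but finer structural information (for example, multiplicity profiles of sums and differences and the specific geometry of the overlaps) in order to produce a valid $t_n$ at each stage, presumably by a counting or pigeonhole argument showing that aligning choices outnumber colliding ones. Pinning down the exact asymptotic constants $2$ and $1+\sqrt{2}$, as opposed to a merely suboptimal ratio, is where the construction must be executed most carefully.
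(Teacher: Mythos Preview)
The paper does not prove this theorem: it is quoted from \cite{LowerBounds} as background in the introduction, with no argument given. There is therefore no ``paper's own proof'' to compare against.

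On the merits of your sketch: the tensor-power amplification in your first paragraph is correct and standard, so the whole task does reduce to a single base set with ratio $\log s/\log d$ as close to $\alpha$ as desired. Your second and third paragraphs, however, are a plan rather than a proof. The crucial step you flag --- choosing shifts $t_n$ so that sumset blocks overlap almost completely while difference-set blocks stay almost disjoint --- is exactly the substance of the Hennecart--Robert--Yudin construction, and you have not carried it out. In particular, your heuristic that the difference counts should obey the Pell-type recurrence $d_n \approx 2d_{n-1}+d_{n-2}$ is not justified: you would need to explain what structural invariant propagates through the induction and why a shift $t_n$ satisfying both constraints exists at every stage. As written, the proposal identifies the right target numerics but contains no mechanism that forces them.
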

 
In 1973, Haight~\cite{Haight} found for each $k$ and $\epsilon > 0$, an integer $q$ and a set $A \subset \mathbb{Z}_q$ such that $A-A = \mathbb{Z}_q$ and $|kA| \leq \epsilon q$. Recently, Ruzsa~\cite{Ruzsa1} gave a similar construction, and observed that Haight's work even gives a constant $\alpha_k >0 $ for each $k$ with the property that there are arbitrarily large $q$ with sets $A \subset \mathbb{Z}_q$ such that $A-A = \mathbb{Z}_q$ and $|kA| \leq q^{1-\alpha_k}$. The ideas in both constructions are relatively similar, but Ruzsa's argument is cosiderably more concise.\\

In~\cite{Nathanson}, Nathanson applied Ruzsa's method to construct sets $A\subset R$ with $A- A = R$, but $kA$ small, for rings $R$ that are more general than $\mathbb{Z}_q$. In the same paper, he posed the following more general question. Given a polynomial $F(x_1, x_2, \dots, x_n)$ with coefficients in $\mathbb{Z}$, and a set $A \subset \mathbb{Z}_N$, write $F(A) = \{F(a_1, a_2, \dots, a_n) \colon a_1, \dots, a_n \in A\}$. His question can be stated as: given two polynomials $F, G$ over $\mathbb{Z}$ and $\epsilon > 0$, does there exist arbitrarily large $N$ and a set $A \subset \mathbb{Z}_N$ such that $F(A) = \mathbb{Z}_N$, but $|G(A)| < \epsilon N$?\footnote{Actually, Nathanson poses this question for more general rings $R$, but for $R = \mathbb{Z}$, the formulation we give here is a natural one.}\\

Let us now state the main result of this paper, which answers the first interesting cases of Nathanson's question. Once again we recall the notation
\[A^2 + kA = \{a_1a_2 + a'_1 + a'_2 + \dots + a'_k\hspace{2pt}\colon\vspace{2pt} a_1, a_2,a'_1, \dots, a'_k \in A\},\]
and more generally,
\[lA^2 + kA = \{a_1a_2 + \dots + a_{2l-1}a_{2l} +  a'_1 + a'_2 + \dots + a'_k\hspace{2pt}\colon\vspace{2pt} a_1, a_2, \dots, a_{2l}, a'_1, \dots, a'_k \in A\}.\]

\begin{theorem} Given $k \in \mathbb{N}_0$ and any $\epsilon > 0$, there is a natural number $q$ and a set $A \subset \mathbb{Z}_q$ such that
\[A - A = \mathbb{Z}_q,\text{ but }|A^2 + kA| \leq \epsilon q.\]\end{theorem}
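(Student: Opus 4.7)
The plan is to extend the Haight--Ruzsa construction by introducing a nilpotent multiplicative factor in $\mathbb{Z}_q$ that absorbs the quadratic term $a_1 a_2$. First, I would invoke Ruzsa's construction (as referenced in~\cite{Ruzsa1}) to obtain a set $B \subset \mathbb{Z}_M$ with $B - B = \mathbb{Z}_M$ and $|jB| \leq \delta M$, where $j$ is a fixed integer depending on $k$ and $\delta > 0$ is an arbitrarily small prescribed quantity. This controls the purely additive part of the problem.

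Next I would take $q = M p^2$ for a large prime $p$ with $\gcd(M, p) = 1$ and identify $\mathbb{Z}_q \cong \mathbb{Z}_M \times \mathbb{Z}_{p^2}$ via the Chinese Remainder Theorem. The crucial algebraic observation is that $p \mathbb{Z}_{p^2}$ is a nilpotent ideal of $\mathbb{Z}_{p^2}$, with $(p \mathbb{Z}_{p^2})^2 = \{0\}$. The candidate set is $A = A_1 \cup A_2$ with $A_1 = B \times p\mathbb{Z}_{p^2}$ (inheriting the additive structure of $B$ on the first coordinate while lying in the nilpotent ideal on the second) and $A_2 = \{0\} \times S$ for a small set $S \subset \mathbb{Z}_{p^2}$ chosen so that $A - A$ reaches every coset of $p\mathbb{Z}_{p^2}$ in the second coordinate. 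The nilpotency guarantees $A_1 \cdot A_1 \subseteq \mathbb{Z}_M \times \{0\}$, $A_1 \cdot A_2 \subseteq \{0\} \times p\mathbb{Z}_{p^2}$, and $A_2 \cdot A_2 \subseteq \{0\} \times S^2$; combined with the fact that $kA$ has first coordinate in some $jB$ and controlled second coordinate, one can estimate $A^2 + kA$ piece by piece, each piece bounded by a factor of $|jB| \leq \delta M$ or by $p^2$, yielding $|A^2 + kA| \leq \epsilon q$ for a suitable choice of parameters.

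The main obstacle is that the first-coordinate contribution coming from $A_1^2 + kA$ is $B \cdot B + j B$, and while Ruzsa's construction controls the iterated sumset $|jB|$, it does not directly bound the product set $|B \cdot B|$. One way to overcome this is to choose $B$ with additional multiplicative structure so that $B \cdot B \subseteq j' B$ for a small $j'$ (reducing the product to a sumset bound controlled by Ruzsa), or to iteratively apply the nilpotent construction on the $\mathbb{Z}_M$ factor itself. A second, related obstacle is the choice of $A_2$: it must be rich enough that $A - A = \mathbb{Z}_q$ holds (requiring $S \cup (-S)$ to cover the cosets of $p\mathbb{Z}_{p^2}$), yet small and structured enough that $A_2 \cdot A_2 + kA$ does not inflate beyond $\epsilon q$. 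Balancing these constraints and doing the precise bookkeeping for all cross-terms in $A^2 + kA$ is the delicate technical core of the proof.
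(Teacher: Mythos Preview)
Your approach has a genuine circularity that the proposed fixes do not resolve. The nilpotent ideal in $\mathbb{Z}_{p^2}$ does kill products in the second coordinate, but it does nothing for the first coordinate: the $\mathbb{Z}_M$-projection of $A_1^2 + kA$ contains $B \cdot B + jB$, and you need this to be $o(M)$. That is precisely the original problem, transported to modulus $M$. Your first suggested fix, arranging $B \cdot B \subseteq j'B$, is not a property Ruzsa's construction enjoys, and producing a set $B$ with $B-B = \mathbb{Z}_M$, small $jB$, \emph{and} $B\cdot B$ contained in a short sumset is at least as hard as the theorem you are trying to prove. Your second suggested fix, iterating the nilpotent construction on $\mathbb{Z}_M$, is an infinite regress: each iteration leaves an uncontrolled $B'\cdot B'$ on a smaller base, and if you push everything into a nilpotent ideal (say taking $A$ inside $p_1\cdots p_n\,\mathbb{Z}_{p_1^2\cdots p_n^2}$) then $A-A$ lies in a proper subgroup and can never equal $\mathbb{Z}_q$.

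The paper's route is quite different and avoids this circularity. Rather than trying to make products vanish via a nilpotent factor, it works over a squarefree modulus (a product of distinct primes) and represents $A$ as $\{\varphi(x)\} \cup \{\varphi(x)+x\}$ for a single map $\varphi$. The problem then reduces, via a Ruzsa-style separation-of-functions argument, to making finitely many concrete quadratic expressions in maps $\alpha_i(x_i)$ take few values. Single-variable expressions are handled by a pigeonhole argument on polynomial values; the genuinely new ingredient is for mixed terms like $\alpha(x)\beta(y) + (\text{linear})$, where the paper uses an ``identification of coordinates'' trick: set one map to zero on each of two prime coordinates, then use approximate homomorphisms $\operatorname{mod}_{p,q}$ between the coordinates to force the remaining linear contributions to lie near a diagonal. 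This is where the real work is, and it has no analogue in your outline.
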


In fact we prove rather more.

\begin{theorem}\label{mainResult}For $l \in \{1,2,3\}$, any $k \in \mathbb{N}_0$ and any $\epsilon > 0$, there is a natural number $q$ and a set $A \subset \mathbb{Z}_q$ such that
\[A-A = \mathbb{Z}_q,\text{ but }|lA^2 + kA| < \epsilon q.\]
Moreover, we can take $q$ to be a product of distinct primes, and we can take the smallest prime dividing $q$ to be arbitrarily large.
\end{theorem}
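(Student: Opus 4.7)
The plan is a Chinese Remainder Theorem (CRT) construction in the Haight-Ruzsa style. Take $q = p_1 p_2 \cdots p_n$ for distinct primes $p_1 < \cdots < p_n$ above any prescribed threshold (which handles the ``smallest prime arbitrarily large'' clause), and identify $\mathbb{Z}_q$ with $\prod_i \mathbb{Z}_{p_i}$. The cleanest incarnation is to let $A$ correspond under this ring isomorphism to a Cartesian product $\prod_i B_i$ of local pieces $B_i \subset \mathbb{Z}_{p_i}$. Since CRT respects both addition and multiplication, one then has
\[
A - A \;=\; \prod_i (B_i - B_i), \qquad lA^2 + kA \;=\; \prod_i (lB_i^2 + kB_i)
\]
(as subsets of $\prod_i \mathbb{Z}_{p_i}$), so the global problem reduces to a local one: produce $B_i \subset \mathbb{Z}_{p_i}$ with $B_i - B_i = \mathbb{Z}_{p_i}$ and $|lB_i^2 + kB_i|/p_i \leq 1 - \delta_i$, where the $\delta_i > 0$ are such that $\prod_i(1-\delta_i) < \epsilon$. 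Any such collection immediately yields $A - A = \mathbb{Z}_q$ and $|lA^2 + kA| < \epsilon q$.

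The core of the theorem therefore lies in the local construction. The condition $B - B = \mathbb{Z}_p$ forces $|B| \geq \sqrt{p}$, so the natural starting point is a near-Sidon set of size $\Theta(\sqrt{p})$; what is new here, beyond the purely additive Haight setting, is the need to confine the product set $B^2$ as well. A natural recipe is to take $B$ inside a multiplicatively structured template---a shift of a small multiplicative subgroup, a short geometric progression, or a union of a bounded number of such cosets---so that $B^2$ inherits the same confinement, and then to add a small additive completion to $B$ so that $B - B = \mathbb{Z}_p$ without inflating $lB^2 + kB$ past the allowed density. The number of products one can tolerate in $lB^2$ will clearly depend on how much additive room the template leaves, and the restriction to $l \in \{1,2,3\}$ should be dictated by how many such product terms can be absorbed before the sumset saturates.

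The main obstacle is precisely this balancing act: the additive spread that $B - B = \mathbb{Z}_p$ demands and the multiplicative concentration that keeps $lB^2$ confined pull against one another, exactly in the regime where sum-product phenomena assert themselves. For $l \in \{1,2,3\}$ I expect one can still tip the balance by a careful, possibly case-dependent, choice of template (plausibly requiring finer control on the primes $p_i$, e.g.\ choosing them so that $p_i-1$ has the right divisors to host a useable multiplicative subgroup); for $l \geq 4$ sum-product style lower bounds should force $lB^2$ to exhaust $\mathbb{Z}_p$ whenever $|B| \geq \sqrt{p}$, which presumably accounts for the theorem's restriction. Once the local sets $B_i$ have been constructed with the required bounds, choosing $n$ sufficiently large that $\prod_i(1-\delta_i) < \epsilon$ and forming $A$ as above delivers the theorem by the CRT identities. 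If a pure CRT-product proves too rigid, I would fall back on an enriched template in which $A$ is a union of several CRT-products indexed by subsets of primes, still analysable coordinate by coordinate, at the cost of more intricate bookkeeping.
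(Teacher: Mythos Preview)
Your plan has a genuine gap at the local step, and it is not a technicality but the entire difficulty. You propose to find, for each large prime $p$, a set $B \subset \mathbb{Z}_p$ with $B - B = \mathbb{Z}_p$ and $|lB^2 + kB| \leq (1-\delta)p$, and you gesture at ``near-Sidon sets inside multiplicatively structured templates,'' but you give no construction and no argument. The tension you correctly identify --- $B - B = \mathbb{Z}_p$ forces $|B| \geq \sqrt{p}$, while multiplicative concentration demands structure that sum-product phenomena resist --- is precisely the obstacle, and nothing in your sketch overcomes it. Concretely: a subset of a small multiplicative coset has its \emph{difference} set trapped in a small set too, so it cannot cover $\mathbb{Z}_p$; taking larger cosets or adding an ``additive completion'' of size $\Theta(\sqrt{p})$ will, by sum-product, typically make $B^2$ of size $\Theta(p)$ already. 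Whether a local $B$ with these properties exists for every $k$ and $l\in\{1,2,3\}$ is itself a nontrivial problem, and you have not solved it.

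The paper avoids this problem altogether. Its $A$ is \emph{not} a Cartesian product: it takes $A = \{\varphi(x)\}\cup\{\varphi(x)+x\}$ for a map $\varphi$ built recursively by Ruzsa's separation-of-functions technique (Proposition~\ref{RuzsaArg}), where the $i$-th CRT component $\varphi_i(x)$ is allowed to depend on the residues of $x$ modulo all earlier factors. This converts the problem not into a single local-set question but into a finite list of polynomial ``expressions'' in several \emph{independent} functions $\alpha_j$ (one per variable), each over a modulus of one's choosing. These expressions are then dispatched one by one --- by affine substitutions, by an ``identification of coordinates'' trick that uses approximate homomorphisms between different $\mathbb{Z}_{p_i}$, by Weyl-type equidistribution, and (for $l=3$) by a probabilistic argument. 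None of these ingredients amounts to, or implies, a solution of your local problem in a single $\mathbb{Z}_p$; your fallback of ``a union of CRT products'' still requires each block to satisfy $B_i-B_i=\mathbb{Z}_{p_i}$ and so runs into the same wall.
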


We shall discuss each of the cases $l=1,2,3$ separately. Note also an interesting phenomenon in the opposite direction. Namely, if we are not allowed freedom in the choice of the modulus, a statement like the theorem above cannot hold. The reason is that, by the result of Glibichuk and Rudnev (Lemma 1 in ~\cite{Glib}) whenever $A \subset \mathbb{F}_p$ for a prime $p$, is a set of size at least $|A| > \sqrt{p}$, then $10A^2 = \mathbb{F}_p$ (and $A-A = \mathbb{F}_p$ certainly implies $|A| > \sqrt{p}$). Hence, unlike the linear case, already for quadratic expressions we have strong obstructions. 

In fact, this problem is comparable in spirit to sum-product phenomenon, which can be stated as the following theorem.

\begin{theorem} (Bourgain, Katz, Tao~\cite{BKT}, Sum-product estimate.) Let $\delta > 0$ be given. Then there is $\epsilon > 0$ such that whenever $A \subset \mathbb{Z}_q$ for a prime $q$ satisfies
\[q^{\delta} < |A| < q^{1-\delta},\]
then one has
\[\max\{|A^2|, |2A|\} \geq |A|^{1 + \epsilon}.\]
\end{theorem}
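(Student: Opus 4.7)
The plan is to argue by contradiction: suppose $|A+A|, |A\cdot A| \leq |A|^{1+\epsilon}$ for a small $\epsilon$ to be chosen in terms of $\delta$, and derive a contradiction with the size assumption $q^\delta < |A| < q^{1-\delta}$. The strategy is to leverage the fact that $\mathbb{F}_q$ is a prime field, hence has no proper subfields, so any subset behaving ``like a subfield'' must be either of bounded size or essentially all of $\mathbb{F}_q$.

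The first step is to promote control of single sumset/product-set into control of all iterated sums and products. Applying the Pl\"unnecke--Ruzsa inequality (Theorem 1.1 of the excerpt) gives $|kA-lA|\leq |A|^{1+O(\epsilon)}$ for each fixed $k,l$, and applying it inside the multiplicative group $\mathbb{F}_q^*$ (after discarding $0$ from $A$, which is harmless) gives $|A^{(k)}/A^{(l)}|\leq |A|^{1+O(\epsilon)}$, where $A^{(k)}$ denotes the $k$-fold product set. The second step is a Balog--Szemer\'edi--Gowers / Katz--Tao refinement: pass to a large subset $A'\subseteq A$ with $|A'|\geq |A|^{1-O(\epsilon)}$ whose sum and product sets remain polynomially controlled simultaneously. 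Iterating this refinement a bounded number of times, one produces a set $B$ of size $|A|^{1-O(\epsilon)}$ such that every ``short polynomial expression'' in elements of $B$, e.g.\ $(B-B)(B-B)+(B-B)(B-B)$, still has size at most $|A|^{1+O(\epsilon)}$. This is the precise sense in which $B$ approximates a subfield.

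The third and decisive step is to show that no such approximate subfield can exist inside $\mathbb{F}_q$ at the scale $q^\delta < |B| < q^{1-\delta}$. Here one uses the primality of $q$ crucially: the only genuine subfields are $\{0\}$ and $\mathbb{F}_q$. Quantitatively, the argument proceeds by means of a finite-field Szemer\'edi--Trotter type incidence bound (or equivalently, a direct Bourgain--Katz--Tao style combinatorial lemma) showing that if $B$ were approximately closed under both operations then the number of incidences between a Cartesian product of shifts/dilates of $B$ and a family of lines over $\mathbb{F}_q$ would exceed the allowed bound, forcing $|B|=q^{1-o(1)}$ or $|B|=q^{o(1)}$.

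The main obstacle is the third step, that is, establishing the incidence bound (or equivalently, the non-existence of approximate subfields of intermediate size) in the prime field setting, which is essentially the hardest input of the theorem. The secondary difficulty is bookkeeping: one must choose $\epsilon$ small enough in terms of $\delta$ and the absolute constants from Pl\"unnecke--Ruzsa, Balog--Szemer\'edi--Gowers, and the incidence bound, so that the inequalities $|A|^{1-O(\epsilon)} > q^{\delta/2}$ and $|A|^{1+O(\epsilon)} < q^{1-\delta/2}$ survive through the whole chain and deliver the desired contradiction with the hypothesis $q^\delta < |A| < q^{1-\delta}$.
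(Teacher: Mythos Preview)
This theorem is not proved in the paper at all: it is quoted as a background result, attributed to Bourgain, Katz and Tao~\cite{BKT}, and used only to motivate the contrast between prime and composite moduli. There is therefore no ``paper's own proof'' to compare your proposal against. Your outline is a reasonable high-level summary of the original Bourgain--Katz--Tao argument (small doubling and small multiplicative doubling, Pl\"unnecke--Ruzsa and Balog--Szemer\'edi--Gowers to pass to an approximate subring, then a Szemer\'edi--Trotter type incidence bound in $\mathbb{F}_q$ ruling out approximate subfields of intermediate size), but since the present paper simply cites the result, there is nothing further to assess here.
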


This was further generalized to arbitrary modulus $q$.

\begin{theorem}\label{bgsthm} (Bourgain~\cite{Bcomp}, Sum-product estimate for composite moduli.) Given $q, q'$ such that $q' | q$, write $\pi_{q'}$ for the natural projection from $\mathbb{Z}_q \to \mathbb{Z}_{q'}$.\\
Let $\delta > 0$ be given. We then have $\epsilon, \eta > 0$ such that the following holds. Whenever $A \subset \mathbb{Z}_q$ satisfies
\[|A| \leq q^{1-\delta}\]
and,
\[|\pi_{q'}(A)| \geq {q'}^{\delta}\text{ for all }q'|q,\text{ with }q' \geq q^{\eta},\]
then
\[\max\{|A^2|, |2A|\} \geq |A|^{1 + \epsilon}.\] 
\end{theorem}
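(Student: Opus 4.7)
The plan is to adapt the Bourgain--Katz--Tao prime-modulus sum--product proof to $\mathbb{Z}_q$, with the projection hypothesis playing the role of blocking the obstructions that are peculiar to composite moduli. Suppose for contradiction that $K := \max\{|A^2|, |2A|\} < |A|^{1+\epsilon}$ for small $\epsilon$. I would first invoke Pl\"unnecke--Ruzsa (applied both additively and, in its multiplicative form, multiplicatively) to show that every polynomial expression in the elements of $A$ of bounded complexity yields a set of size $|A|^{1+O(\epsilon)}$. Then I would apply Balog--Szemer\'edi--Gowers to pass to a large subset $A' \subseteq A$ with strong simultaneous additive and multiplicative structure: $|A'+A'|, |A'\cdot A'| \leq |A|^{1+O(\epsilon)}$.

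The next step is to extract a non-trivial growth statement modulo some prime factor of $q$ and lift it back. Fix a prime divisor $p$ of $q$ with $p \ge q^\eta$, and look at the image $B := \pi_p(A')$ in $\mathbb{F}_p$. The hypothesis guarantees $|B| \ge p^\delta$, while the inherited sumset/productset bounds give $|B+B|, |B\cdot B| \le |B|^{1+O(\epsilon)}$. If additionally $|B| < p^{1-\delta'}$ for an appropriate $\delta'$, the prime sum--product estimate contradicts these bounds directly. If instead $|B|$ is too large in $\mathbb{F}_p$, one removes $p$ from the modulus and recurses on $q/p$; the projection hypothesis is stable under this reduction and the recursion terminates when only prime factors below $q^\eta$ remain.

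The main obstacle is handling prime-power divisors $p^a \parallel q$ with $a \ge 2$. Within $\mathbb{Z}_{p^a}$ the chain of ideals $p\mathbb{Z}_{p^a} \supset p^2 \mathbb{Z}_{p^a} \supset \cdots$ provides a family of ``approximate subrings'' in which $A$ could in principle be concentrated, destroying any hope of growth. The resolution, following Bourgain, is a multi-scale analysis: for every $j \le a$, the hypothesis controls $|\pi_{p^j}(A)|$, so one can study the filtration layer by layer using Fourier-analytic and character-sum estimates on $\mathbb{Z}_{p^a}$, and propagate a growth statement from the residue field $\mathbb{F}_p$ up through the lifts. The parameters $\epsilon$ and $\eta$ must then be chosen small enough that the accumulated losses across all scales, all prime factors, and the Pl\"unnecke/BSG reductions remain strictly smaller than the growth gained at each step, which is the quantitatively delicate heart of the argument.
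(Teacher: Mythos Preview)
The paper does not prove this theorem. Theorem~\ref{bgsthm} is stated purely as background, attributed to Bourgain~\cite{Bcomp}, and is used only to motivate the remark that sum--product phenomena persist in composite moduli and hence that results like Theorem~\ref{mainResult} cannot be pushed arbitrarily far. There is no proof in the paper to compare your proposal against.

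That said, your sketch is roughly the shape of Bourgain's actual argument: Pl\"unnecke--Ruzsa and Balog--Szemer\'edi--Gowers to pass to a set with simultaneous small doubling in both operations, projection to prime-power factors, and a multi-scale analysis along the ideal filtration in each $\mathbb{Z}_{p^a}$ to rule out concentration near an approximate subring. The place where your outline is thinnest is exactly the place where the real work lies: the multi-scale step is not a matter of ``propagating growth from $\mathbb{F}_p$ up through the lifts'' via character sums alone, but requires a careful inductive argument on the number of prime factors and on the $p$-adic levels, with the projection hypothesis invoked at \emph{every} intermediate divisor $q' \ge q^\eta$ (not just the primes). Your recursion ``remove $p$ and pass to $q/p$'' also needs justification that the projection hypothesis for divisors of $q/p$ is inherited, which is not automatic since the threshold $q^\eta$ changes. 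If you want to turn this into an actual proof you should consult~\cite{Bcomp} directly; the paper under review offers no further details.
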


Hence, the sum-product phenomenon still holds in general $\mathbb{Z}_N$, even when N is composite, and given the similarity of our problem, it could well be that the result of Glibichuk and Rudnev stated above holds in the more general setting as well. (Note that if $A-A = \mathbb{Z}_q$, then it satisfies the technical condition in  Theorem~\ref{bgsthm}.) 

\begin{conjecture} There is $l$ such that whenever $A \subset \mathbb{Z}_q$ and $A-A = \mathbb{Z}_q$, then we have $lA^2 + lA = \mathbb{Z}_q$.\end{conjecture}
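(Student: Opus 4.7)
My plan is to combine two ingredients already in the literature: Bourgain's sum-product estimate for composite moduli (Theorem~\ref{bgsthm}) and the Glibichuk--Rudnev argument from the prime case. The hypothesis $A - A = \mathbb{Z}_q$ forces $|\pi_{q'}(A)| \geq \sqrt{q'}$ for every divisor $q' \mid q$, so the technical condition of Theorem~\ref{bgsthm} is automatic. I would first dispose of the dense range $|A| \geq q/100$, where a direct pigeonhole argument already gives $3A = \mathbb{Z}_q$. In the sparser range $|A| \leq q^{1-\delta}$, I would apply sum-product expansion iteratively: at each stage either $|A^2|$ or $|2A|$ grows by a factor $|A|^\eta$, so after $O(\log \log q)$ iterations one obtains a set $B$ of density at least $q^{1-\delta'}$ built from a bounded-complexity combination of the original $A$. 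A further constant number of additions of $A$ should then cover $\mathbb{Z}_q$.

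The central structural obstacle is that such an iteration produces combinatorially complex nested expressions like $((A^2)+A) \cdot (A + A) + \ldots$, not the rigid form $lA^2 + lA$ with a universal $l$. A cleaner Fourier-analytic route would be to count representations directly: writing $r(t)$ for the number of ways to represent $t$ as $\sum_{i=1}^l a_i a'_i + \sum_{j=1}^l b_j$ with entries in $A$, and summing over additive characters $\chi$ of $\mathbb{Z}_q$,
\[r(t) = \frac{1}{q} \sum_{\chi} \chi(-t) \Bigl(\sum_{a,a' \in A} \chi(a a')\Bigr)^l \Bigl(\sum_{b \in A} \chi(b)\Bigr)^l,\]
so it would suffice to show that the $\chi$-trivial term $|A|^{3l}/q$ dominates. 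A back-of-envelope count shows that if one could guarantee $|\sum_b \chi(b)| \leq |A|^{1-\alpha}$ and $|\sum_{a,a'}\chi(aa')| \leq |A|^{2-\beta}$ uniformly for all nontrivial $\chi$, then choosing $l > 1/((\alpha+\beta)\delta_0)$ (where $|A| \geq q^{\delta_0}$) would close the argument. For primitive $\chi$ on a prime modulus, standard bilinear exponential sum estimates for $\sum_{a,a'} \chi(aa')$ give precisely the required cancellation.

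The hard part will be controlling non-primitive characters in the composite case. If $\chi$ factors through a quotient $\mathbb{Z}_{q/d}$, then both $\sum_{a,a'} \chi(aa')$ and $\sum_b \chi(b)$ depend only on $\pi_{q/d}(A)$, and one has no cancellation a priori beyond the bound $|\pi_{q/d}(A)| \geq \sqrt{q/d}$. Exploiting this mixed-scale structure is precisely what Bourgain's proof of Theorem~\ref{bgsthm} achieves, but encoding the output as a fixed-depth expression $lA^2 + lA$ with $l$ independent of the prime factorisation of $q$ seems to require genuinely new ideas, which is presumably why the statement is posed here only as a conjecture.
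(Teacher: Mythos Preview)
The statement you are addressing is labelled a \emph{Conjecture} in the paper, and the paper offers no proof of it; on the contrary, the paper's main results (Theorem~\ref{mainResult}) go in the opposite direction, constructing for each $l\le 3$ moduli $q$ and sets $A\subset\mathbb{Z}_q$ with $A-A=\mathbb{Z}_q$ but $|lA^2+kA|<\epsilon q$. So there is no ``paper's own proof'' against which to compare your attempt.

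Your write-up is not a proof either, and to your credit you say so explicitly in the final paragraph. The sketch is a reasonable survey of the natural attack lines and their obstructions. A few remarks on the details. First, the claim that $|A|\ge q/100$ implies $3A=\mathbb{Z}_q$ needs care for composite $q$: Kneser-type arguments give $3A=\mathbb{Z}_q$ provided $A$ is not trapped in a coset of a proper subgroup, but the hypothesis $A-A=\mathbb{Z}_q$ does rule that out, so this step can be made rigorous. Second, the iterated sum-product route fails for exactly the reason you identify: the output of Theorem~\ref{bgsthm} is a disjunction (either $|A^2|$ or $|2A|$ is large), and chasing the iteration produces nested expressions whose shape depends on $q$, not a uniform $lA^2+lA$. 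Third, in the character-sum approach, the real enemy is not just non-primitive characters but the fact that for a character factoring through $\mathbb{Z}_{q/d}$ one may have $|\pi_{q/d}(A)|$ as small as $\sqrt{q/d}$, at which point the bilinear sum $\sum_{a,a'}\chi(aa')$ need not exhibit any cancellation whatsoever beyond the trivial bound. This is precisely the regime in which Bourgain's machinery operates by a delicate multi-scale induction, and there is no known way to package its output into a fixed-depth polynomial expression with modulus-independent parameters. Your diagnosis that ``genuinely new ideas'' are needed matches the paper's stance in posing this as an open conjecture.
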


\subsection{Acknowledgements}
I would like to thank Trinity College Cambridge and the Department of Pure Mathematics and Mathematical Statistics of Cambridge University for their generous support, and Imre Leader for encouragement and helpful discussions concerning this paper. 

\section{Overview of the Construction}

\indent We begin the paper by reviewing Ruzsa's construction and generalizing its main ideas slightly to the context of polynomial expressions in $A$. As it turns out, to be able to construct a set $A$ such that $A-A = \mathbb{Z}_q$, but $|lA^2 + kA| = o(q)$, it will suffice to consider expressions which are sums of terms of the form $\alpha_i(x_i) + cx_i$, $(\alpha_i(x_i) + cx_i)(\alpha_i(x_i) + c'x_i)$ and $(\alpha_i(x_i) + cx_i)(\alpha_j(x_j) + c'x_j)$, with $c,c' \in \{0,1\}$ and then choose the maps so that the number of values attained by each expression is small. For example, one of the expressions that we have to consider already for the case $l=1$ is $\alpha_1(x_1) \alpha_2(x_2) + \alpha_1(x_1) + x_1 + \alpha(x_3)$. This discussion takes place in Section 3 and the rest of the paper is devoted to constructions of maps for various expressions.\\
 
In Section 4, we construct sets $A$ such that $A-A = \mathbb{Z}_q$ but $A^2 + kA$ is small. In this construction, we come to a basic version of one of the main ideas, which we call the \emph{identification of coordinates}. Very roughly, if $q$ is a product of distinct prime $p_1p_2 \dots p_n$, using approximate homomorphisms between $\mathbb{Z}_{p_i}$ and $\mathbb{Z}_{p_j}$, we can essentially treat $\mathbb{Z}_q$ as a vector space of dimension $n$. Then, altough we might not ensure that each coordinate attains few values, we can ensure that their sum attains few values.\\

In Section 5, we construct sets $A$ such that $A-A = \mathbb{Z}_q$ but $2A^2 + kA$ is small. There, we improve our results for the expression that involve a single variable using a variant of Weyl's equidistribution theorem for polynomials. Using this result, the identification of coordinates is developped further and we conclude this section with the strongest form of identification of coordinates.\\

The final part of the construction, finding sets $A$ with $3A^2 + kA$ small, is carried out in Section 6. There, we also touch upon some limitations of the usual approach and therefore develop different ideas to treat some of the remaining expressions. Namely, for certain choices of coefficients, in the expression
$$(\alpha(x) + c_1 x)(\beta(y) + c_2y) + (\alpha(x) + c_3 x)(\beta(y) + c_4y) + (\alpha(x) + c_5x)(\gamma(z) + c_6z)$$
the identification of coordinates cannot work. For this expression, we give a different, probabilistic argument.\\

The final section is devoted to some open problems and questions that naturally arise, including the motivation for some of these. We have tried to organize the paper so that methods used naturally develop from the case $A^2 + kA$ to the case $3A^2 + kA$, highlighting the new difficulties that arise and why the earlier arguments  are not powerful enough for the later expressions.

\section{Overview of Ruzsa's argument and Initial Steps} We now briefly discuss Ruzsa's construction of sets $A \subset \mathbb{Z}_q$ such that $A-A = \mathbb{Z}_q$, but $|kA| = o(q)$. His ideas will be important for the later constructions given in this paper.\\
Let us first analyse the requirement that $A-A = \mathbb{Z}_q$. Given any $x \in \mathbb{Z}_q$, we thus have $y \in A$ such that $y + x \in A$. If we write $\varphi(x)$ for such a $y$, this yields a map $\varphi: \mathbb{Z}_q \to \mathbb{Z}_q$ with the property that all $\varphi(x)$ and $\varphi(x) + x$ are contained in $A$. Removing all other elements from $A$ does not change the equality $A-A = \mathbb{Z}_q$, and it can only make $kA$ smaller, so Ruzsa's starting point is to consider a set $A$ of the form
$$\{\varphi(x): x \in \mathbb{Z}_q\} \cup \{\varphi(x) + x: x \in \mathbb{Z}_q\},$$
where $\varphi$ is map from $\mathbb{Z}_q$ to itself. We shall do the same in this paper as well, and throughout the paper we will devote ourselves to finding suitable modulus $q$ and maps on $\mathbb{Z}_q$.\\

Thus, we have to understand how to find a suitable $q$ and a map $\varphi$ which then give rise to the desired set $A$. Let us now examine the elements of $kA$. These are sums $a_1 + a_2 + \dots + a_k$, where $a_i \in A$. But each element of $A$ is either $\varphi(x)$ or $\varphi(x) + x$ for some $x \in \mathbb{Z}_q$. Hence, elements of $kA$ are of the form
$$\sum_{i \in I} \varphi(x_i) + \sum_{i \notin I} (\varphi(x_i) + x_i)$$
for a subset $I \subset[k]$ and $x_1, x_2, \dots, x_k$. Immediately we see that the number of different expressions here is bounded in terms of $k$ (in fact, it equals $2^k$). Further, we consider which of the $x_i$ are equal, grouping the corresponding terms $\varphi(x_i)$ and $\varphi(x_i) + x_i$ together, and renaming the variables along the path to $y_1, y_2, \dots, y_s$. Hence, every element of $kA$ is of the form
\begin{dmath}\label{yexpressions}\sum_{i = 1}^s (a_i \varphi(y_i) + b_i y_i),\end{dmath}
where $s \leq k$, $k \geq a_i \geq b_i \geq 0$ and all $y_1, \dots, y_s$ are different. Once again, treating $y_i$ as formal variables, the number of expressions we wrote is bounded in terms of $k$. The plan now is to make sure that each such expression attains a small number of values, so that in total only at most $\epsilon q$ values attained.\\

Ruzsa's main idea in the costruction is the \emph{separation of functions}, which we now discuss. In all these expressions we have the same map $\varphi$ occuring. However, we can turn the problem of constructing a single function $\varphi$ that works for all expressions into a much easier problem of constructing a function for each expression separately. We first list all the expressions of the form~(\ref{yexpressions}), sorted in the asscending order by the number of variables appearing. Thus, our list start from expressions of the form $a \varphi(y) + b$. Next, we split $q$ as a product of coprime numbers $q = q_1 q_2 \dots q_r$, with one $q_i$ for each expression so that by Chinese Remainder Theorem we have $\mathbb{Z}_q = \mathbb{Z}_{q_1} \oplus \mathbb{Z}_{q_2} \oplus \dots \oplus \mathbb{Z}_{q_r}$.

We promise that however we choose an expression and values of $y_i$, we get at least one zero coordinate (which need not depend on the expression) and we call this \textbf{ZCP} (Zero Coordinate Promise). If $i$-th expression has only one variable appearing, thus it is of the form $a\varphi(y) + b y$, we can easily ensure \textbf{ZCP} by setting the $i$-th component of the function as $\varphi_i (y) = - b a^{-1} y_i$. Now, take any expression    
$$\sum_{i = 1}^s (a_i \varphi(y_i) + b_i y_i),$$
and assume that for every such expression with fewer than $s$ variables \textbf{ZCP} holds. Let $q'$ be the product of $q_i$ for the expressions with fewer than $s$ variables. Note that, if we are given $y_1, y_2, \dots, y_s$, and if any two among them have the same value in $\mathbb{Z}_{q'}$, by induction hypothesis, \textbf{ZCP} already holds. Hence, we may assume not only that $y_1, y_2, \dots, y_s$ are different, but that they are different modulo $q'$. Write $y'_i$ for the residue of $y_i$ mod $q'$. Then, looking at $j^{\text{th}}$ coordinate, we have to define $\varphi_j$ such that 
$$\sum_{i = 1}^s (a_i \varphi_j(y'_i, (y_i)_j) + b_i (y_i)_j)$$
equals zero for all choices of $y_1, \dots, y_s$ such that $y'_i$ are different. But, we can rewriting $\varphi_j(y'_i, (y_i)_j)$ as $\varphi_{j, y'_i}((y_i)_j)$ already tells us that we are actually looking for a new function for each variable! Hence, our goal is to find $s$ functions $\varphi_{j, y'_1}, \dots \varphi_{j, y'_s}$ such that the expression is once again zero. But linear maps once again work.

We start our own work in this paper by slightly generalizing Ruzsa's idea to polynomial setting. In what follows, by an \emph{$i$-degree term} we think of a product of $i$ terms of the from $\alpha_j(x_j)$ or $(\alpha_j(x_j) + x_j)$, the only rule being that indices of the map and variable to which it is applied (and which is possibly added) coincide. For example, $(\alpha_1(x_1) + x_1) \alpha_2(x_2) ^ 2$ and $\alpha_1(x_1) (\alpha_2(x_2) + x_2) (\alpha_3 (x_3) + x_3)$ are both $3$-degree terms, but $\alpha_1(x_2) \alpha_2(x_3) \alpha_3(x_1)$ is not, since the indices are not valid.

\begin{proposition}\label{RuzsaArg} Let $k$ be given, and let $a_1, a_2, \dots, a_k \in \mathbb{N}$. Suppose that for every $\epsilon > 0$ and every formal expression $E$ in functions $\alpha_i$ and variables $x_i$ of the form
$$\text{sum of }a_k \text{of } k\text{-degree terms} + \text{sum of }a_{k-1} \text{of } (k-1)\text{-degree terms}  + \dots + \text{sum of }a_1 \text{of } 1\text{-degree terms} ,$$ 
we can find a modulus $q$, which is a product of arbitrarily large distinct primes, and functions $\theta_i: \mathbb{Z}_q \to \mathbb{Z}_q$, so that the $E$ takes at most $\epsilon q$ values in $\mathbb{Z}_q$, when the functions $\theta_i$ are substituted in $E$. Then, for every $\epsilon > 0$, there is a modulus $Q$, product of arbitrarily large distinct primes, and a set $A \subset \mathbb{Z}_Q$ such that $A-A = \mathbb{Z}_Q$ and 
$$|a_k A^k + a_{k-1} A^{k-1} + \dots + a_1 A| \leq \epsilon Q.$$\end{proposition}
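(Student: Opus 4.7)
My plan is to follow Ruzsa's construction strategy, extended to polynomial expressions. The three key ingredients are: the parametric description $A = \{\varphi(x) : x \in \mathbb{Z}_Q\} \cup \{\varphi(x) + x : x \in \mathbb{Z}_Q\}$, which automatically yields $A - A = \mathbb{Z}_Q$; an expansion of the iterated sumset into a finite list of formal expressions; and a separation-of-functions trick via the Chinese Remainder Theorem. Every element of $a_k A^k + \cdots + a_1 A$ is a sum of $a_l$ products of $l$ elements of $A$ for $l = 1, \dots, k$; substituting each factor by $\varphi(x_r)$ or $\varphi(x_r) + x_r$ and then grouping factors that share the same input variable gives a finite list $E_1, \dots, E_N$ of formal expressions of exactly the shape allowed by the hypothesis, where $N$ depends only on $k$ and $a_1, \dots, a_k$.

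Next, I apply the hypothesis to each $E_j$ with tolerance $\epsilon/N$, obtaining a modulus $q_j$ (a product of arbitrarily large distinct primes) and functions $\theta^{(j)}$ on $\mathbb{Z}_{q_j}$ under which $E_j$ takes at most $(\epsilon/N)\, q_j$ values. Choosing the primes dividing $q_j$ to be larger than all previously used primes keeps $q_1, \dots, q_N$ pairwise coprime; I then set $Q = q_1 \cdots q_N$ and identify $\mathbb{Z}_Q \cong \bigoplus_{j=1}^N \mathbb{Z}_{q_j}$. I order the $E_j$ by ascending number of variables and build $\varphi$ coordinate by coordinate: its $j$-th component $\varphi_j : \mathbb{Z}_Q \to \mathbb{Z}_{q_j}$ is allowed to depend on both the projection $y' \in \mathbb{Z}_{q_1 \cdots q_{j-1}}$ and on the $j$-th component $y^{(j)} \in \mathbb{Z}_{q_j}$, so that writing $\varphi_j(y) = \varphi_{j, y'}(y^{(j)})$ amounts to selecting one function on $\mathbb{Z}_{q_j}$ per residue $y'$. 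This is the separation-of-functions trick: even though $\varphi$ is a single global map, its action in coordinate $j$ splits into independent functions parametrized by the earlier residues.

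When I evaluate $E_j$ on inputs $y_1, \dots, y_s$, two cases arise. If two of the residues $y'_i$ coincide, merging the corresponding variables identifies the value with one attained by a simpler expression in the list, which has already been handled by induction on the number of variables. If the $y'_i$ are all distinct, the functions $\varphi_{j, y'_i}$ can be assigned independently per slot to realize the $\theta^{(j)}_i$'s provided by the hypothesis, forcing the $j$-th coordinate of $E_j(y_1, \dots, y_s)$ into a set of size at most $(\epsilon/N)\, q_j$. The image of $E_j$ in $\mathbb{Z}_Q$ therefore has size at most $(\epsilon/N)\, Q$, and summing over $j$ yields $|a_k A^k + \cdots + a_1 A| \leq \epsilon Q$. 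I expect the main obstacle to be this separation-of-functions step, specifically verifying that the $\theta^{(j)}_i$'s produced by the hypothesis can be installed consistently as the per-residue functions $\varphi_{j, y'_i}$ across all possible evaluations of $E_j$; in Ruzsa's purely linear setting this is transparent because linear maps absorb the residue dependence effortlessly, but in the polynomial formulation one has to check that the matching between residues $y'_i$ and slots $i$ carried out when the variables are distinct modulo $q_1 \cdots q_{j-1}$ really delivers what the hypothesis needs.
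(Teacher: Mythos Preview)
Your overall strategy matches the paper's: parametrize $A$ via a single map $\varphi$, list finitely many formal expressions, order them by number of variables, and use the Chinese Remainder Theorem together with the separation-of-functions trick. However, the obstacle you yourself flag in the last paragraph is a real gap, and the paper resolves it by a further blow-up of the modulus that your proposal omits.

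Concretely: you assign \emph{one} modulus $q_j$ per expression $E_j$, apply the hypothesis once to obtain a fixed tuple $(\theta^{(j)}_1,\dots,\theta^{(j)}_s)$, and then wish to set $\varphi_{j,y'_i}:=\theta^{(j)}_i$. But $\varphi_{j,y'}$ must be a single function depending only on the residue $y'$, while the slot index $i$ that a residue occupies depends on the entire tuple $(y'_1,\dots,y'_s)$. The same residue $y'$ will appear in slot~$1$ for one tuple and in slot~$2$ for another, so you would be forced to have $\varphi_{j,y'}=\theta^{(j)}_1$ and $\varphi_{j,y'}=\theta^{(j)}_2$ simultaneously. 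Unless the $\theta^{(j)}_i$ happen to coincide (which the hypothesis does not promise), this is inconsistent. In Ruzsa's linear setting the problem does not arise because the same linear map works in every slot; in the polynomial setting it genuinely bites.

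The paper's fix is to enlarge the coordinate attached to $E_j$: rather than a single $q_j$, one takes $R_j=\prod_C r_C$, one factor $r_C$ for \emph{each} ordered tuple $C$ of $s$ distinct residues modulo the previously-constructed modulus $Q_s$. On the coordinate $r_C$ one defines $\varphi$ to be $\theta^{(C)}_i$ when the input has residue equal to the $i$-th entry of $C$, and zero otherwise. Now the residue-to-slot matching is frozen by $C$, so there is no conflict. The price is that the hypothesis must be invoked with a much smaller tolerance (roughly $\epsilon/Q_s^{s}$ per sub-coordinate), and the induction is organized by number of variables (building $Q_1,Q_2,\dots$) rather than expression-by-expression. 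Once you insert this extra layer of product, your argument becomes the paper's.
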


\begin{proof} We proceed as in the Ruzsa's construction (except that we do not insist of only having zero value in a coordinate, small number of values suffices). 
As before, we sort the expressions by the number of variables appearing, and process them in groups of those having the same number of a variables. We now turn to details.\\

Let $N = a_1 + a_2 + \dots + a_k$. Let $E_1, E_2, \dots, E_r$ be all the expressions in variables $y_1, y_2, \dots, y_N$ of the following form. Each expression is a sum of $a_k$ terms, each being a product of $k$ short terms $\varphi(y_i)$ or $\varphi(y_i) + y_i$, followed by $a_{k-1}$ terms which are products of $k-1$ short terms, etc. with a final contribution of $a_1$ terms, each being $\varphi(y_i)$ or $\varphi(y_i) + y_i$. As in the discussion before, these are all expressions that naturally arise from $a_k A^k + \dots + a_1 A$, when $A$ is defined as $\{\varphi(x)\colon x \in \mathbb{Z}_Q\} \cup \{\varphi(x) + x\colon x \in \mathbb{Z}_Q\}$. Comparing these expressions with the expressions in the assumptions of this proposition, we have that here only a single formal function appears, while in the other expressions we have a separate function for each variable. Let $m_0 = 0, m_1, m_2, \dots, m_N = r$ be indices such that if $m_i < j \leq m_{i+1}$, then the number of different variables among $(y_t)_{t=1}^N$ appearing in the expression $E_j$ is exactly $i+1$.\\  

Fix an increasing sequence $0 < \epsilon_1 < \epsilon_2 < \dots < \epsilon_N = \epsilon$. We inductively construct moduli $Q_1, Q_2, \dots, Q_N$ and functions $\varphi_i: Q_i \to Q_i$ such that for every $i \leq N$ we have that union of all images of expressions $E_1, E_2, \dots, E_{m_i}$ (that is all expressions futuring at most $i$ variables) takes at most $\epsilon_i Q_i$ values (when $\varphi_i$ is substituted in the expressions).\\

\indent\textbf{Base case: $i = 1$.} By the assumption, for every expression $E_i$ that has only one variable, we have moduli $q_i$ with arbitrarily large distinct prime factors, and a map $\theta^{(1)}_i$, such that $E_i$ takes only at most $\epsilon_1  q_i/ {m_1}$ values. Thus, w.l.o.g. $q_1, q_2, \dots, q_{m_1}$ are all coprime, with distinct arbitrarily large prime factors. We set $Q_1 = q_1 q_2 \dots q_{m_1}$ and identify $\mathbb{Z}_{Q_1}$ with $\mathbb{Z}_{q_1} \oplus \mathbb{Z}_{q_2} \oplus \dots \oplus \mathbb{Z}_{q_{m_1}}$, and we define $\varphi_1$ coordinate-wise as $\varphi_{1, i} (x) := \theta^{(1)}_i(x_i)$, where $x_i$ is $i$-th coordinate of $x$. Note that union of all values attained by these $m_1$ expressions with this definition of $Q_1$ and $\varphi_1$ has size bounded by 
$$\sum_{i = 1}^{m_1} |\operatorname{Im} E_i| \leq \sum _{i=1}^{m_1} \frac{\epsilon_1 q_i}{m_1} \frac{Q_1}{q_i} = \epsilon_1 Q_1,$$
as desired. (Here we write $\operatorname{Im} E_i$ for the resulting image of the expression $E_i$, and we have a trivial bound for it -- the expression may only take at most $\epsilon_1q_i/m_1$ values on the $i^{\text{th}}$ coordinate.)\\

\textbf{Inductive step.} Suppose now that we have found $\varphi_s: \mathbb{Z}_{Q_s} \to \mathbb{Z}_{Q_s}$  such that in total all expressions with at most $s$ variables have a small image $V_s$, i.e. only at most $\epsilon_s Q_s$ values are attained. We shall construct $Q_{s+1}$ as a product $Q_s R_{m_s + 1} R_{m_s + 2} \dots R_{m_{s+1}}$, where $R_i$ is an auxiliary modulus for the expression $E_i$, with the property that either $E_i$ takes one of the small number of values on $\mathbb{Z}_{Q_s}$ or a value in another small set in $\mathbb{Z}_{R_i}$. Here we use Ruzsa's \emph{separation of functions} idea.\\
Fix an expression $E_i$ with exactly $s + 1$ variables. If we take values of these variables restricted to $\mathbb{Z}_{Q_s}$, and it happens so that at least two such values coincide, then using the map $\varphi_s$ the value of the expression $E_i$ (also restricted to $\mathbb{Z}_{Q_s}$) is actually a value of one of the expressions we already considered, with at most $s$ variables, so it lies in the small set $V_s$. Hence, we only need to consider the choices of $y_1, y_2, \dots, y_{s+1}$ (w.l.o.g. these are the variables that appear) which differ in $\mathbb{Z}_{Q_s}$. We split the expression $E_i$ further into cases on $y_i$ mod $Q_s$, thus into further $L \leq Q_s^{s+1}$ cases. Pick an arbitrary choice $C$ of $s+1$ distinct values in $Q_s$. Look back at $E_i$ and change every appearance of $\varphi(y_t)$ by $\alpha_t(y_t)$. By assumptions, we have a choice of an integer $r_C$ with arbitrarily large distinct prime factors and maps $\theta^{(C)}_t$ such that the modified $E_i$ takes only at most $(\epsilon_{s+1} - \epsilon_s)r_C / ((m_{s+1} - m_s) Q_s^{s+1})$ values in $\mathbb{Z}_{r_C}$. Finally, define $R_i$ as the product of all these $r_C$, and $(\varphi_{s+1})_i(x)$ as follows: for every $C$, take $(\varphi_{s+1})_i(x)$ at the coordinate corresponding to $r_C$ to be zero if $x$ modulo $\mathbb{Z}_{Q_s}$ is not in $C$, otherwise, if it is the $j$-th residue, set $(\varphi_{s+1})_i(x) := \theta^{(C)}_j(x')$, where $x'$ is the coordinate of $x$ corresponding to $r_C$. It remains to check the size of images.\\
For every expression and every choice of values of $y_1, y_2, \dots, y_N$, we either end up in $A_s \times \mathbb{Z}_{R_{m_s + 1}} \times \mathbb{Z}_{R_{m_s + 2}} \times \dots \times \mathbb{Z}_{R_{m_{s + 1}}}$, which has size at most $\epsilon_s Q_{s+1}$, or one of the coordinates is in a fixed subset of $\mathbb{Z}_{R_t}$ of size at most $(\epsilon_{s+1} - \epsilon_s)R_t/(m_{s+1} - m_s)$. Summing everything together, the image has at most $\epsilon_{s+1} Q_{s+1}$ values as desired.\end{proof} 

The rest of the paper is therefore devoted to finding moduli $q$ and maps $\alpha_i\colon \mathbb{Z}_q \to \mathbb{Z}_q$ under which the expressions like $(\alpha_1(x_1) + x_1) (\alpha_2(x_2) + x_2) + \alpha_3(x_3)^2$ do not take too many values. Along the way, we also discuss related problems and questions.\\

\noindent \textbf{Notation.} Throughout the paper, greek letters $\alpha, \beta$ and $\gamma$ will be used for the maps appearing in the expressions. The following functions will be frequently used in our construction. For a prime $p$, we use the standard projection homomorphism $\pi_p \colon \mathbb{Z} \to \mathbb{Z}_p$, which sends integer $x$ to $x + p\mathbb{Z}$. Next, we define $\iota_p\colon\mathbb{Z}_{p} \to \mathbb{Z}$ by sending $x \in \mathbb{Z}_p$ to the integer $\iota_p(x) \in \{0, 1, \dots, p-1\} \subset \mathbb{Z}$ such that $\pi_p \circ\iota_p(x) = x$. For two primes $p$ and $q$, we also define the map $\operatorname{mod}_{p,q}\colon \mathbb{Z}_{p}\to \mathbb{Z}_{q}$ given by $\operatorname{mod}_{p,q} = \pi_q \circ \iota_p$. Finally, in any abelian group $Z$, and functions $f, g\colon S \to G$, from a set $S$ to $Z$, we write $f \overset{M}{=} g$ to mean that $\{f(s)-g(s)\colon s \in S\}$ is a set of size at most $M$. In particular, $f \overset{O(1)}{=} g$ means that $\{f(s)-g(s)\colon s \in S\}$ has a bounded size as $S$ grows. 

\section{Sets $A$ with small $A^2 + kA$}

The main result of this section is the case $l=1$ of the Theorem~\ref{mainResult}.

\begin{theorem}\label{1square} For any $k \in \mathbb{N}_0$ and any $\epsilon > 0$, there is a natural number $q$, which is a product of distinct, arbitrarily large primes, and a set $A \subset \mathbb{Z}_q$ such that $A-A = \mathbb{Z}_q$, while $|A^2 + kA| < \epsilon q$. \end{theorem}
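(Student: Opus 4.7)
The plan is to invoke Proposition~\ref{RuzsaArg} with the degree pattern $a_2 = 1$, $a_1 = k$, and all other $a_i = 0$. This reduces the task to the following statement: for every formal expression $E$ built from a single product $T_1 T_2$ (each $T_i$ of the form $\alpha_{u_i}(x_{u_i})$ or $\alpha_{u_i}(x_{u_i}) + x_{u_i}$) together with at most $k$ linear terms $\alpha_v(x_v) + c\, x_v$ with $c \in \{0,1\}$, and for every $\eta > 0$, one exhibits a modulus $q$ that is a product of distinct, arbitrarily large primes together with maps $\theta_i \colon \mathbb{Z}_q \to \mathbb{Z}_q$ under which $E$ takes at most $\eta q$ values.

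I would first dispatch the easy sub-case $u_1 = u_2$: the quadratic then uses only a single variable, and setting the associated $\theta$ to be identically zero kills $T_1T_2$ entirely. The residual purely linear expression is then handled by the classical separation-of-functions mechanism already embedded in the proof of Proposition~\ref{RuzsaArg} — iterate on the number of distinct variables, killing each 1-degree term by an affine choice of the relevant $\theta_j$.

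The main case is $u_1 \neq u_2$. Expanding yields
\[T_1 T_2 = \alpha_{u_1}(x_{u_1})\alpha_{u_2}(x_{u_2}) + c_1 x_{u_1}\alpha_{u_2}(x_{u_2}) + c_2 x_{u_2}\alpha_{u_1}(x_{u_1}) + c_1 c_2\, x_{u_1} x_{u_2},\]
and one must contend with a genuine bilinear piece $x_{u_1}x_{u_2}$ (when $c_1 = c_2 = 1$), which no one-variable function $\theta_i$ can absorb directly. Here I would deploy the \emph{identification of coordinates} idea sketched in the overview. Take $q = p_1 p_2 \cdots p_n$ for $n$ large, distinct primes, identify $\mathbb{Z}_q \cong \bigoplus_{i=1}^n \mathbb{Z}_{p_i}$, and build each $\theta_i$ coordinate-by-coordinate using the approximate-homomorphism maps $\operatorname{mod}_{p,p'}$ to transport information between distinct prime components. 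The design should be such that although no individual $\mathbb{Z}_{p_i}$-coordinate of $E$ is controlled, the aggregate in $\mathbb{Z}_q$ is confined to a short list of cosets of a large subgroup, because the inter-coordinate dependencies in the product $\theta_{u_1}(x_{u_1})\theta_{u_2}(x_{u_2})$ can be made to telescope against the linear terms.

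The main obstacle is precisely this free bilinear term. Unlike purely linear expressions, where an affine $\theta$ matches any coefficient, the genuine product $x_{u_1}x_{u_2}$ cannot be annihilated; the best one can hope for is that the cross-coordinate couplings correlate the outputs tightly. Because $\operatorname{mod}_{p,p'}$ is only approximately a homomorphism, each telescoping step introduces an $O(1)$ error; totalled over the $n$ coordinates this gives an image of size $O(n)$ in $\mathbb{Z}_q$, which is negligible against $q = \prod p_i$ once the primes are chosen sufficiently large, yielding the required $\eta q$ bound. Feeding the per-expression data back into Proposition~\ref{RuzsaArg} assembles the set $A \subset \mathbb{Z}_Q$ satisfying $A - A = \mathbb{Z}_Q$ and $|A^2 + kA| < \epsilon Q$, as required.
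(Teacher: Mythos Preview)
Your single-variable case has a genuine gap. Setting $\theta_u \equiv 0$ does \emph{not} kill $T_1T_2$ in general: if $T_1 = T_2 = \alpha_u(x_u)+x_u$ then $T_1T_2 = (\alpha_u(x_u)+x_u)^2$ becomes $x_u^2$, not zero. Even in the sub-cases where the product does vanish, you have now spent your only degree of freedom in $\theta_u$, so any linear term $\alpha_u(x_u)+x_u$ collapses to the surjective map $x_u$ and the image is all of $\mathbb{Z}_q$. The paper's argument here is different in kind: for a single variable one treats the whole expression as a polynomial $c_d(x)\alpha(x)^d+\dots+c_0(x)$ in the unknown $\alpha(x)$, observes that over $\mathbb{Z}_p$ one can always choose $\alpha(x)$ to avoid any prescribed set of size $<p/d$ (Lemma~\ref{singleVarLemma}), and then takes a product of many primes so that the missing proportions multiply down below $\epsilon$ (Corollary~\ref{singleVarExpression}).

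Your two-variable case worries about a problem that is not there. The ``free bilinear term'' $x_{u_1}x_{u_2}$ disappears under the substitution $\alpha_i' := \alpha_i + c_i\,\mathrm{id}$, which rewrites each $T_i$ simply as $\alpha_i'(x_i)$; the quadratic part is then exactly $\alpha_1'(x_1)\alpha_2'(x_2)$ and the linear part becomes $\lambda_i\alpha_i'(x_i)+\mu_i x_i$ with $\lambda_i\neq 0$ whenever it is nonzero. With this reduction the paper needs only \emph{two} primes $p\le q$: set $(\alpha_1')_p=0$ and $(\alpha_2')_q=0$ so the product vanishes identically, then use the remaining components $(\alpha_1')_q,(\alpha_2')_p$ together with $\operatorname{mod}_{p,q}$ and $\operatorname{mod}_{q,p}$ to force the two coordinates of the residual linear expression to sum (approximately) to zero (Proposition~\ref{basicdiffmoduliadd}). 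No $n$-fold telescoping is required, and the image lands in $O(q)$ values inside $\mathbb{Z}_{pq}$.
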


\begin{proof} We start from the Proposition~\ref{RuzsaArg}. To be able to construct $A \subset \mathbb{Z}_q$ with full difference set, but small $A^2 + kA$, we need to handle the expressions that are sums of the \emph{quadratic part} which is a product of two terms of the form $\alpha_i(x_i) + x_i$ or $\alpha_i(x_i)$, and a \emph{linear part} which is itself a sum of $k$ summands, each being of the form $\alpha_i(x_i) + x_i$ or $\alpha_i(x_i)$. Note that for the terms in the linear part whose variables do not appear in the quadratic part, we can define the corresponding maps $\alpha_i$ to be affine so that the variables involved cancel out. Therefore, w.l.o.g. we only consider expressions whose variables appear already in the quadratic part. Note also that for the quadratic part we have two cases: either only one variable, w.l.o.g. $x_1$, appears, or exactly two variables, w.l.o.g. $x_1$ and $x_2$, appear. We treat these cases separately.\\

\noindent\textbf{Case 1: only one variable in the quadratic part.} Thus, our goal now is to show that if we are given a quadratic expression featuring only one variable, we can find a modulus and function, so that the expression takes a small number of values. In fact, here we do more and prove the claim for expressions of arbitrary degree.

\begin{lemma}\label{singleVarLemma}Let $d \in \mathbb{N}$ be given, and let $p > d$ be a prime. Then, given any maps $c_0, c_1, \dots, c_d\colon \mathbb{Z}_p \to \mathbb{Z}_p$ and any set $F \subset \mathbb{Z}_p$ of size less than $p/d$, we can find another map $\alpha\colon\mathbb{Z}_p \to \mathbb{Z}_p$ such that the expression
$$c_d(x) \alpha(x)^d + \dots c_1(x) \alpha(x) + c_0(x)$$
does not take a value in $F$ for any $x$ that has at least one of $c_1(x), c_2(x), \dots, c_d(x)$ non-zero.
\end{lemma}

\begin{proof} Suppose that for some $x$, we have that for every choice of $v = \alpha(x)$ we have $c_d(x) v^d + \dots c_1(x) v + c_0(x) \in F$. By the pigeonhole principle, some value $f \in F$ is hit at least $d+1$ times. Thus, the polynomial
$$c_d(x) v^d + \dots c_1(x) v + c_0(x) - f$$
has at least $d+1$ zeros, making it a zero polynomial. Hence $c_1(x), c_2(x), \dots, c_d(x)$ are simultaneously zero, proving the lemma.\end{proof}

\begin{corollary}\label{singleVarExpression} Let $E$ be an arbitrary $\mathbb{Z}$-linear combination of terms of the form $\alpha(x)^i x^j$, where at least one of such terms with $i > 0$ appears. Given any $\epsilon > 0$, we can find a modulus $q$, which is a product of distinct arbirtrarily large primes, and a map $\alpha\colon \mathbb{Z}_q \to \mathbb{Z}_q$ such that under $\alpha$ the expression $E$ takes at most $\epsilon q$ values in $\mathbb{Z}_q$.\end{corollary}
\begin{proof} Rewrite $E$ by grouping together a $\mathbb{Z}$-linear combination of $x^j$ that appear next to each $\alpha(x)^i$. Thus, we can write $E$ as $\alpha(x)^d f_d(x) + \dots + \alpha(x) f_1(x) + f_0(x)$, where each $f_i(x)$ is a polynomial in $x$ over $\mathbb{Z}$, and at least one of $f_1, f_2, \dots, f_d$ is not a zero polynomial. Let $D = \max \deg f_i$. Pick distinct arbitrarily large primes $p_1, p_2, \dots, p_t$, all w.l.o.g. larger than $2 d (D+1)$ and absolute values of coefficients of $f_1, f_2, \dots, f_d$ (so that non-zero polynomials do not become zero modulo $p_i$). By the Lemma~\ref{singleVarLemma}, we may find a map $\alpha_i\colon\mathbb{Z}_{p_i} \to \mathbb{Z}_{p_i}$ for each $i$ such that the image of $E$ has size at most $(1-1/d)p_i + 1$, when the variable $x$ ranges over values such that polynomials $f_1, f_2, \dots, f_d$ are not simlutaneously zero. But there are at most $D$ values of $x$ such that $f_1(x) = \dots = f_d(x) = 0$, so we conclude that modulo each $p_i$, the expression $E$ may take at most $(1-1/d)p_i + D + 1 \leq (1-1/2d) p_i$ values. Finally, set $q = p_1 p_2 \dots p_t$ and take $\alpha\colon \mathbb{Z}_q \to \mathbb{Z}_q$ to be $\alpha = (\alpha_1, \alpha_2, \dots, \alpha_t)$, where we as usual identify $\mathbb{Z}_q$ with $\mathbb{Z}_{p_1} \oplus \mathbb{Z}_{p_2} \oplus \dots \oplus \mathbb{Z}_{p_t}$. Hence, modulo $q$, the expression takes at most $(1-1/2d)^t q$ values. Taking $t$ large enough so that $(1-1/2d)^t < \epsilon$ proves the corollary.\end{proof}

The case 1 now follows by applying Corollary~\ref{singleVarExpression}.\\

\noindent \textbf{Case 2: the quadratic part has two variables.} The quadratic part must look like a product of two terms, each being either $\alpha_i(x_i) + x_i$ or $\alpha_i(x_i)$. By suitably renaming the variables, and adding $x_i$ to $\alpha_i(x_i)$ if necessary, w.l.o.g. we only need to consider the case when the quadratic part is $\alpha_1(x_1) \alpha_2(x_2)$, and the whole expression is
$$\alpha_1(x_1) \alpha_2(x_2) + L_1(x_1) + L_2(x_2)$$
where each $L_i(x_i)$ is a $\mathbb{Z}$-linear combination of $\alpha_i(x_i)$ and $x_i$. Note also that if $L_i(x_i)$ is nonzero, then $\alpha_i(x_i)$ appears with a nonzero coefficient.\\

We have now come to an important point in this paper, and one of the key ideas, which we shall now explain. We have to construct $q$ and maps $\alpha_1, \alpha_2\colon\mathbb{Z}_q \to \mathbb{Z}_q$ such that $\alpha_1(x_1) \alpha_2(x_2) + L_1(x_1) + L_2(x_2)$ takes $o(q)$ values. Suppose for a moment that the linear terms $L_i$ are both zero. Then, we have an easy way to make $\alpha_1(x_1) \alpha_2(x_2)$ constant, by setting one of the $\alpha_i$ to be zero. However, such an approach cannot work in the case when $L_1, L_2$ are not zero, as it would force one of the $L_i$ to be an affine map, which is surjective. As a way to overcome this, we can use both $\alpha_1 = 0$ and $\alpha_2 = 0$ to get additional freedom. Thus, we set $q = q_1 q_2$, where $q_1, q_2$ are coprime products of distinct primes, identify $\mathbb{Z}_q$ with $\mathbb{Z}_{q_1} \oplus \mathbb{Z}_{q_2}$, and set $\alpha_1$ to be zero on the first coordinate, and $\alpha_2$ to be zero on the second coordinate. Hence if $L_1(x_1) = \lambda_1 \alpha_1(x_1) + \mu_1 x_1$ and $L_2(x_2) = \lambda_2 \alpha_2(x_2) + \mu_2 x_2$, then the expression becomes
\begin{dmath}\label{twoCoordsExpression}\left(\mu_1 (x_1)_1 + \lambda_2 (\alpha_2)_1(x_2) + \mu_2 (x_2)_1, \lambda_1 (\alpha_1)_2(x_1) + \mu_1 (x_1)_2 + \mu_2 (x_2)_2\right).\end{dmath}
We now want to find $(\alpha_1)_2$ and $(\alpha_2)_1$ so that the expression~(\ref{twoCoordsExpression}) does not take too many values in $\mathbb{Z}_{q_1} \oplus \mathbb{Z}_{q_2}$. Suppose for a moment that instead of coprime $q_1$ and $q_2$ we actually had $q_1 = q_2$. Then, we could have simply taken
$$(\alpha_1)_2(x_1) := - \lambda_1^{-1} \mu_1 ((x_1)_1 + (x_1)_2)$$
and 
$$(\alpha_2)_1(x_2) := - \lambda_2^{-1} \mu_2 ((x_2)_1 + (x_2)_2),$$
which ensures that every value taken by the expression is of the form $(v, -v)$ and hence it is in small subset $\{(x,y): x+y = 0\}$ of $\mathbb{Z}_{q_1} \oplus \mathbb{Z}_{q_1}$. It turns out that we can use the same approach even if $q_1 \not= q_2$. We shall refer to this idea as the \emph{identification of coordinates}, which will appear at other places in this paper as well. The following proposition and its proof formalize this discussion. We slightly change the notation to make the reading easier.

\begin{proposition} (Basic identification of coordinates.) Let $\lambda_0, \lambda_1, \lambda_2, \mu_1, \mu_2 \in \mathbb{Z}$ be given and let $p \leq q$ be primes greater than $|\lambda_1|, |\lambda_2|$. Suppose that if $\lambda_1 = 0$ then $\mu_1 = 0$ and if $\lambda_2 = 0$ then $\mu_2 = 0$. Then we have $\alpha, \beta\colon \mathbb{Z}_p \oplus \mathbb{Z}_q \to \mathbb{Z}_p \oplus \mathbb{Z}_q$ such that
$$f\colon (x,y) \mapsto \lambda_0 \alpha(x) \beta(y) + \lambda_1 \alpha(x) + \mu_1 x + \lambda_2 \beta(y) + \mu_2 y$$
takes at most $O(q)$ values, when $x, y$ range over all pairs of values in $\mathbb{Z}_p \oplus \mathbb{Z}_q$.\label{basicdiffmoduliadd}\end{proposition} 

Recall the definition of map $\iota_p$ as the natural embedding of $\mathbb{Z}_p$ into $\mathbb{Z}$, the natural projection $\pi_p \colon \mathbb{Z} \to \mathbb{Z}_p$, and finally, the composition $\operatorname{mod}_{p,q}\colon \mathbb{Z}_p \to \mathbb{Z}_q$, given by $\operatorname{mod}_{p,q} = \pi_q \circ \iota_p$. Before proceeding with the proof, it is useful to note some easy properties of the maps $\iota_p$ and $\operatorname{mod}_{p,q}$.

\begin{lemma} \label{modProps} Let $p, p', p_1, p_2, p_3$ be primes. Then
\begin{itemize}
\item[](1) Given $z \in \mathbb{Z}$, we have $p | \iota_p(\pi_p(z)) - z$. Also, $\iota_p(\pi_p(z)) \leq z$, when $z \geq 0$.
\item[](2) Given $x, y\in \mathbb{Z}_{p}$, we have $\iota_p(x) + \iota_p(y) - \iota_p(x+y) \in \{0, p\}.$
\item[](3) Given $x, y\in \mathbb{Z}_{p}$, we have 
$$\operatorname{mod}_{p,p'}(x) + \operatorname{mod}_{p,p'}(y) - \operatorname{mod}_{p,p'}(x+y) \in \{0, \pi_{p'}(p)\} \subset \mathbb{Z}_{p'}.$$
\item[](4) Provided that $p_3 < (t+1) p_2$, we have
$$\operatorname{mod}_{p_2, p_1} \circ \operatorname{mod}_{p_3, p_2}(x) -\operatorname{mod}_{p_3, p_1}(x) \in\{-t\pi_{p_1}(p_2), -(t-1)\pi_{p_1}(p_2), \dots, 0\} \subset \mathbb{Z}_{p_1}.$$
\end{itemize}\end{lemma}

\begin{proof} \textbf{(1)} Applying $\pi_p$, we have $\pi_p(\iota_p(\pi_p(z)) - z) = \pi_p \circ \iota_p(\pi_p(z)) - \pi_p(z) = 0$, thus $p | \iota_p(\pi_p(z)) - z$. If $z \geq 0$, then $\iota_p(\pi_p(z)) - z \leq p-1$, so the claim follows.\\

\textbf{(2)} Let $x' = \iota_p(x), y' = \iota_p(y) \in \mathbb{Z}$. Note that $\pi_p(x' + y') = x+y$ and $x' + y' \in \{0, 1, \dots, 2p-2\}$. From definition, $\pi_p(\iota_p(x+y)) = x+y$ and $\iota_p(x+y) \in \{0, 1, \dots, p-1\}$. Hence, if we set $v = \iota_p(x) + \iota_p(y) - \iota_p(x+y)$, we have $p | v$ and $v \in \{-(p-1), -(p-2), \dots, 2p-2\}$, so $v \in \{0, p\}$.\\
 
\textbf{(3)} The statement follows by applying $\pi_{p'}$ to $\iota_p(x) + \iota_p(y) - \iota_p(x+y) \in \{0, p\}$, noting that $\pi_{p'}$ is an additive homomorphism and recalling that $\operatorname{mod}_{p, p'} = \pi_{p'} \circ \iota_p$.\\  
%Hence, if $x' + y' \leq p-1$, then 
%\begin{equation*}\begin{split}
%\operatorname{mod}_{p,p'}(x+y) &= \pi_{p'} (\iota_p(x+y)) = \pi_{p'} (x' + y') = \pi_{p'} (\iota_p(x) + \iota_p(y)) = \pi_{p'}(\iota_p(x)) + \pi_{p'}(\iota_p(y))\\
%&= \operatorname{mod}_{p,p'}(x) + \operatorname{mod}_{p,p'}(y),\end{split}\end{equation*}
%using the fact that $\pi_{p'}$ is an additive homomorphism. Otherwise, if $x' + y' \geq p$, then
%\begin{equation*}\begin{split}
%\operatorname{mod}_{p,p'}(x+y) &= \pi_{p'} (\iota_p(x+y) - p) = \pi_{p'} (x' + y' - p) = \pi_{p'} (\iota_p(x) + \iota_p(y) - p)\\
%&= \pi_{p'}(\iota_p(x)) + \pi_{p'}(\iota_p(y)) - \pi_{p'}(p) = \operatorname{mod}_{p,p'}(x) + \operatorname{mod}_{p,p'}(y) - \pi_{p'}(p).\end{split}\end{equation*} 

\textbf{(4)} From the definition, we have 
$$\operatorname{mod}_{p_2, p_1} \circ \operatorname{mod}_{p_3, p_2}(x) -\operatorname{mod}_{p_3, p_1}(x) = \pi_{p_1}(\iota_{p_2}(\pi_{p_2}(\iota_{p_3}(x)))) - \pi_{p_1}(\iota_{p_3}(x)) = \pi_{p_1}(\iota_{p_2}(\pi_{p_2}(\iota_{p_3}(x))) - \iota_{p_3}(x)).$$
Write $v = \iota_{p_2}(\pi_{p_2}(\iota_{p_3}(x))) - \iota_{p_3}(x)$. Using the previous work, we know that $p_2|v$, $v \geq -(p_3 - 1)$ and $v \leq 0$, since $\iota_{p_3}(x) \geq 0$. So $v \in \{-tp_2, -(t-1)p_2, \dots, 0\}$, and the claim follows after applying $\pi_{p_1}$.\end{proof}
%Let $x \in \mathbb{Z}_{p_3}$ be arbitrary, and let $x' = \iota_{p_3}(x)$. Set further $y = \pi_{p_2}(x'), y' = \iota_{p_2}(y), z = \pi_{p_1}(y')$ and $z' = \pi_{p_1}(x')$. Thus
%$$z = \operatorname{mod}_{p_2, p_1} \circ \operatorname{mod}_{p_3, p_2}(x), z' = \operatorname{mod}_{p_3,p_1}(x).$$
%Note that $y'$ is such that $\pi_{p_2}(y') = y$ and $y' \in \{0, 1, \dots, p_2 - 1\}$. Hence, $\pi_{p_2}(x') = \pi_{p_2}(y')$, implying that $p_2$ divides $x' - y'$. Since $p_3 < 2p_2$, we must have $y' = x'$ or $y' = x' - p_2$. Finally,
%$$z-z' = \pi_{p_1}(y') - \pi_{p_1}(x') = \pi_{p_1}(y'-x') \in \{0, -\pi_{p_1}(p_2)\},$$ 
%completing the proof of the lemma.\end{proof}

\begin{proof}[Proof of Proposition~\ref{basicdiffmoduliadd}] Observe immediately that if $\lambda_0 = 0$, we can ensure that $\lambda_1 \alpha(x) + \mu_1 x = 0$ and $\lambda_2 \beta(y) + \mu_2 y = 0$, proving the claim. Therefore, we may assume $\lambda_0 \not= 0$, w.l.o.g. $\lambda_0 = 1$. If $\mu_1 = \mu_2 = 0$ holds, then the function becomes $f \colon (x,y) \mapsto \alpha(x) \beta(y) + \lambda_1 \alpha(x) + \lambda_2 \beta(y)$, which can be made zero, by choosing zero maps for $\alpha$ and $\beta$. If exactly one of $\mu_1, \mu_2$ vanishes, $\mu_1 = 0$ say, then we can pick $\beta$ to ensure that $\lambda_2 \beta(y) + \mu_2 y = 0$, and set $\alpha(x) = 0$ to get $f = 0$.
From now on, assume that $\lambda_1, \lambda_2, \mu_1, \mu_2 \not= 0$.\\
Set $\alpha_1(x) = 0$ and $\beta_2(y) = 0$. This makes $\alpha(x) \beta(y) = 0$ for all choices of $x, y$.  It remains to pick $\alpha_2(x), \beta_1(y)$ so that $(\mu_1 x_1 + \lambda_2 \beta_1(y) + \mu_2 y_1, \lambda_1 \alpha_2(x) + \mu_1 x_2 + \mu_2 y_2)$ takes a small number of values.\\
Set $\beta_1(y) = -\lambda_2^{-1}(\mu_1\operatorname{mod}_{q,p}(y_2) + \mu_2 y_1)$ and $\alpha_2(x) = -\lambda_1^{-1}(\mu_2\operatorname{mod}_{p,q}(x_1) + \mu_1 x_2)$. Hence $f$ becomes
$$f(x,y) = (\mu_1(x_1 - \operatorname{mod}_{q,p}(y_2)), \mu_2(y_2 - \operatorname{mod}_{p,q}(x_1))).$$
Let $\Phi\colon \mathbb{Z}_p \oplus \mathbb{Z}_q \to \mathbb{Z}$ be given by $\Phi(u,v) = \iota_p (\mu_1^{-1} u) + \iota_q(\mu_2^{-1} v)$, noting that $\mu_1, \mu_2 \not=0$. Then,
$$\Phi (f(x,y)) = \iota_p(x_1 - \operatorname{mod}_{q,p}(y_2)) + \iota_q(y_2 - \operatorname{mod}_{p,q}(x_1)).$$
Fixing the set $S = \{-p, 0, p\} + \{-q,0,q\}$, from Lemma~\ref{modProps} we have 
$$\iota_p(x_1 - \operatorname{mod}_{q,p}(y_2)) + \iota_q(y_2 - \operatorname{mod}_{p,q}(x_1)) \in \iota_p(x_1) - \iota_p(\operatorname{mod}_{q,p}(y_2)) + \iota_q(y_2) - \iota_q(\operatorname{mod}_{p,q}(x_1)) + S$$
or, under our notation introduced earlier,
$$\Phi (f(x,y)) \overset{O(1)}{=} \iota_p(x_1) - \iota_p(\operatorname{mod}_{q,p}(y_2)) + \iota_q(y_2) - \iota_q(\operatorname{mod}_{p,q}(x_1)) = \iota_p(x_1) - \iota_q(\pi_q(\iota_p(x_1))) + \iota_q(y_2) - \iota_p(\pi_p(\iota_q(y_2)))$$
Lemma~\ref{modProps} also implies that $\iota_p (\pi_p(v)) \overset{O(\frac{q}{p})}{=} v$ and $\iota_q (\pi_q(v)) \overset{O(1)}{=} v$, when $|v| = O(q)$, from which we conclude that 
$$\Phi (f(x,y)) \overset{O(\frac{q}{p})}{=} \iota_p(x_1)- \iota_p(x_1) + \iota_q(y_2) - \iota_q(y_2) = 0,$$
so the image of the function $f$ is a subset of a preimage of $\Phi$ of a set of size $O(1)$. Fibers of $\Phi$ are of size at most $p$, so the claim follows.\end{proof}

%Note that in $\mathbb{Z}_p$ we have $-(y_2 \operatorname{mod} p - x_1) = x_1 - y_2 \operatorname{mod}$ (the reason why we write this triviallity explicitly is that we actually have two versions of mod $p$ in use). We now compare the values of
%$$ (x_1 - y_2 \operatorname{mod} p) \operatorname{mod} p$$
%and 
%$$(x_1 \operatorname{mod} q - y_2) \operatorname{mod} q$$
%(where once again mod $p$ has two meanings, one as a map from $\mathbb{Z}_q$, and the other for $\mathbb{Z}_p$). Let $a \in \{0, 1, \dots, p-1\}$ be such that $a \operatorname{mod} p = x_1$, and let $b \in \{0, 1, \dots, q-1\}$ be such that $b \operatorname{mod} q = y_2$.\\

%Assume firstly that $b \in \{0, 1, \dots, p-1\}$. Let $d = a - b \in \{-(p-1), \dots, p-1\}$. Then we get
%$$ (x_1 - y_2 \operatorname{mod} p) \operatorname{mod} p = d \operatorname{mod} p$$
%and
%%$$(x_1 \operatorname{mod} q - y_2) \operatorname{mod} q = d \operatorname{mod} q.$$
%Let $\iota\colon \mathbb{Z}_p \to \{0, 1, \dots, p-1\} \subset \mathbb{Z}$. Then, we have
%\begin{dmath*}\iota(d \operatorname{mod} p) - \iota(d \operatorname{mod} q) = \begin{cases}
%\hfill 0, \text{if } d \geq 0\\
%\hfill p - q, \text{otherwise}\\
%\end{cases}\label{inverserelation}\end{dmath*}
%Hence, if $b < p$, we have that $f$ takes a value $(u_1, u_2)$ such that $\iota(u_1) - \iota(-u_2)$ take one of the two fixed values. Counting these values and values coming from $b \geq p$ separately gives the required bounds.

Applying the Proposition~\ref{basicdiffmoduliadd} finishes the proof of the Theorem~\ref{1square}.\end{proof}

\subsection{Using affine maps in the case of two variables}

In this subsection, we further discuss some quadratic expressions involving two variables. A natural map we can try is an affine map $x \mapsto ax +b$, for constants $a,b$. However, if we look at expression $\alpha(x)\beta(y) + \alpha(x) + x + \beta(y) + y$, which was among the ones necessary to discuss in the proof of Theorem~\ref{1square}, it is easy to see that choosing affine maps from $\mathbb{Z}_q$ to $\mathbb{Z}_q$ for $\alpha$ and $\beta$ yields full image, for every $q$. Here we ask ourselves the question when we can use such maps to get a small image of the function defined by the expression.\\

As we shall see later in the paper, in the construction of $A$ with small $2A^2 + kA$, one of the expressions we shall consider has quadratic part of the form $\alpha_1(x_1) \alpha_2(x_2) + (\alpha_1(x_1) +c_1x_1)(\alpha_2(x_2) + c_2 x_2)$, with $c_1, c_2 \not=0$. It turns out that in this case the affine maps can be used as desired maps. We discuss these maps before the construction of $A$ with small $2A^2 + kA$, so that we can focus better on the new ideas needed for that case.

\begin{lemma}(Affine maps solution.)\label{affineMapsLemma} Let $\nu_1, \nu_2 \not= 0$ and $\lambda_1, \lambda_2, \mu_1, \mu_2$ be integers. Then, for any prime $p$ greater than absolute values of all the given integers, we can find affine maps $\alpha, \beta\colon \mathbb{Z}_p \to \mathbb{Z}_p$ such that
$$\alpha(x) \beta(y) + (\alpha(x) +\nu_1x)(\beta(y) + \nu_2 y) + \lambda_1 \alpha(x) + \mu_1 x + \lambda_2 \beta(y) + \mu_2 y$$
is constant.\end{lemma}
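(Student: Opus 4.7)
The plan is to take $\alpha(x) = ax + b$ and $\beta(y) = cy + d$ with constants $a,b,c,d \in \mathbb{Z}_p$ to be determined, expand the expression as a polynomial in $x,y$, and choose $a,b,c,d$ so that every non-constant coefficient vanishes.

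A direct expansion collects the expression into
\[(2ac + a\nu_2 + c\nu_1 + \nu_1\nu_2)\, xy + \bigl(d(2a+\nu_1) + \lambda_1 a + \mu_1\bigr)\, x + \bigl(b(2c+\nu_2) + \lambda_2 c + \mu_2\bigr)\, y + \bigl(2bd + \lambda_1 b + \lambda_2 d\bigr).\]
So I need the three bracketed quantities in front of $xy$, $x$, and $y$ to all be zero in $\mathbb{Z}_p$; the constant term is then whatever it is.

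The key identity, which makes the approach work, is that (using $p > 2$ so that $2$ is invertible in $\mathbb{Z}_p$) the coefficient of $xy$ vanishes exactly when
\[(2a+\nu_1)(2c+\nu_2) = -\nu_1\nu_2,\]
as is verified by multiplying through by $2$. Since $\nu_1,\nu_2 \neq 0$ in $\mathbb{Z}_p$ (using $p > |\nu_1|,|\nu_2|$), I would simply take $a = 0$, which forces $c = -\nu_2$, and then $2a+\nu_1 = \nu_1 \neq 0$ and $2c+\nu_2 = -\nu_2 \neq 0$. With these values of $a,c$ the $x$- and $y$-coefficient conditions become linear equations in $d$ and $b$ respectively, each with an invertible coefficient, so they determine $d = -\mu_1/\nu_1$ and $b = (\mu_2 - \lambda_2\nu_2)/(-\nu_2)$ uniquely.

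There is no real obstacle here: the only thing that could have gone wrong is the quadratic constraint $(2a+\nu_1)(2c+\nu_2) = -\nu_1\nu_2$ forcing one of the factors to be zero and thereby blocking the subsequent linear solves, but the choice $a=0$ avoids that, and the assumption $\nu_1,\nu_2 \neq 0$ is precisely what guarantees the right-hand side is a nonzero product of two nonzero factors. (The hypotheses of the lemma that exclude $\nu_1=0$ or $\nu_2=0$ are essential for this: if, say, $\nu_2=0$, then the constraint becomes $\nu_2$-independent and the coefficient of $y$ would no longer be freely solvable in $b$.)
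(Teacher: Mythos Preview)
Your proof is correct and follows essentially the same approach as the paper: write $\alpha(x)=ax+b$, $\beta(y)=cy+d$, expand, and solve the resulting three equations for the coefficients of $xy$, $x$, $y$. The paper parametrizes the solution family by $c$ (requiring only $2c+\nu_2\neq 0$), whereas you make the specific choice $a=0$, $c=-\nu_2$; both lead to the same conclusion. One small slip: with $c=-\nu_2$ the $y$-coefficient equation gives $b=(\mu_2-\lambda_2\nu_2)/\nu_2$, not $(\mu_2-\lambda_2\nu_2)/(-\nu_2)$, but this does not affect the argument since the point is only that $2c+\nu_2=-\nu_2\neq 0$ makes the equation solvable.
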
 

\begin{proof} Let $\alpha(x) := a x + b$ and $\beta(y) := c y + d$, with $a,b,c,d$ to be determined. With this choice of maps, the expression above becomes
$$ (ac + (a+\nu_1)(c + \nu_2)) x y + (2ad + d \nu_1 + \lambda_1 a + \mu_1) x + (2bc + b\nu_2 + \lambda_2 c + \mu_2) y + (2bd + \lambda_1 b + \lambda_2 d).$$
Hence, we need to make sure that 
$$2ac + \nu_2 a + \nu_1 c + \nu_1 \nu_2 = 0,$$
$$2ad + \nu_1 d + \lambda_1 a + \mu_1 = 0,$$
and
$$2bc + \nu_2 b + \lambda_2 c + \mu_2 = 0.$$
This is equivalent to
$$ b = -(\lambda_2 c + \mu_2) /(2c + \nu_2),$$
$$ a = -(\nu_1 c + \nu_1 \nu_2) / (2c + \nu_2)$$
and 
$$ d = \left(\mu_1 (2c + \nu_2) - \lambda_1 \nu_1 (c + \nu_2)\right)/\left(\nu_1 \nu_2\right).$$
Hence, we can pick $a,b,c,d$ so that affine maps make our expression equal to constant iff $\nu_1, \nu_2$ are non-zero.
\end{proof}

\section{Sets $A$ with small $2A^2 + kA$} 

This section is devoted to the proof of the case $l=2$ of the Theorem~\ref{mainResult}.

\begin{theorem}\label{2squares} For any $k \in \mathbb{N}_0$ and any $\epsilon > 0$, there is a natural number $q$, which is a product of distinct, arbitrarily large primes, and a set $A \subset \mathbb{Z}_q$ such that $A-A = \mathbb{Z}_q$, while $|2A^2 + kA| < \epsilon q$. \end{theorem}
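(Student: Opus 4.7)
The plan is to invoke Proposition~\ref{RuzsaArg} with $a_2 = 2$, $a_1 = k$ and $a_i = 0$ otherwise, which reduces the theorem to the following task: for every formal expression
\[
E = (\alpha_{i_1}(x_{i_1}) + c_1 x_{i_1})(\alpha_{i_2}(x_{i_2}) + c_2 x_{i_2}) + (\alpha_{i_3}(x_{i_3}) + c_3 x_{i_3})(\alpha_{i_4}(x_{i_4}) + c_4 x_{i_4}) + L,
\]
with $c_j \in \{0,1\}$ and $L$ a $\mathbb{Z}$-linear combination of $\alpha_j(x_j)$ and $x_j$ terms, produce a modulus $q$ (a product of arbitrarily large distinct primes) together with maps $\alpha_i$ under which $E$ takes fewer than $\epsilon q$ values. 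As in the proof of Theorem~\ref{1square}, any variable appearing only in $L$ can be killed by an affine adjustment to the corresponding $\alpha_i$, so we may assume that every variable appearing in $L$ already appears in the quadratic part. The analysis then splits by the number of distinct variables in $\{i_1,i_2,i_3,i_4\}$, which is $1$, $2$, $3$, or $4$.

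The one-variable case is the first new difficulty. Corollary~\ref{singleVarExpression} already handles polynomials in $\alpha(x)$ and $x$ once some $\alpha(x)^i$ coefficient is non-zero, but for $2A^2+kA$ the cancellations between the two quadratic terms can reduce the quadratic-in-$\alpha(x)$ coefficient to zero, leaving an expression only linear in $\alpha(x)$ but with a prescribed polynomial coefficient in $x$; moreover I will need a statement strong enough to be used as a black box inside the later multi-variable coordinate identification. I would therefore strengthen the single-variable result by defining $\alpha$ via a lift--polynomial--project construction $\alpha(x) = \pi_p(g(\iota_p(x)))$ for a carefully chosen polynomial $g$ over $\mathbb{Z}$, and invoke a finite-field analogue of Weyl's equidistribution theorem to conclude that the resulting image is close to equidistributed modulo $p$ and can be trapped in a residue class of size $o(p)$ by a further coordinate-wise tweak.

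For the two-variable case there are several sub-configurations. When both quadratic terms involve the same two variables---the shape $\alpha(x)\beta(y) + (\alpha(x)+c_1 x)(\beta(y)+c_2 y)$ with $c_1,c_2 \neq 0$---Lemma~\ref{affineMapsLemma} produces affine maps that make the quadratic part constant, after which any residual linear part is killed by an affine adjustment. When only one of the two products involves two variables and the other is a square in a single variable (e.g. $\alpha(x)^2 + \alpha(x)\beta(y)$), or when the two products overlap in a single variable, I would extend the identification-of-coordinates argument from Proposition~\ref{basicdiffmoduliadd}: split $q = q_1 q_2 \cdots q_r$ into coprime pieces, zero out one factor in each quadratic product on a designated coordinate so that the pure quadratic part vanishes there, and on the remaining coordinates use $\operatorname{mod}_{p,q}$-bridges to force the outputs into a single codimension-one subset of $\bigoplus \mathbb{Z}_{q_i}$, appealing to the Weyl-strengthened one-variable lemma to absorb any polynomial-in-$\alpha$ residues left behind. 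The three- and four-variable cases are strictly easier: some variable then appears in only one factor, so setting that map to zero eliminates the corresponding quadratic term and reduces the situation to a case with fewer variables.

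The main obstacle, explicitly announced in the paper as ``the strongest form of the identification of coordinates,'' will be expressions such as $\alpha(x)\beta(y) + (\alpha(x)+x)(\beta(y)+y) + L_1(x) + L_2(y)$ in which each $L_i$ carries both an $\alpha_i$-term and a plain-variable term and the coefficients conspire to defeat Lemma~\ref{affineMapsLemma}. Here neither the affine-map solution nor the two-coordinate version of Proposition~\ref{basicdiffmoduliadd} is enough, and I expect the argument to require three or more coprime factors of $q$, with several $\operatorname{mod}_{p,q}$-bridges coordinated simultaneously, the errors from the $\iota_p$--$\pi_p$ compositions controlled exactly as in Lemma~\ref{modProps}, and the stray polynomial-in-$\alpha$ contributions on the leftover coordinates absorbed by the Weyl-equidistribution refinement developed for the one-variable case. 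Synchronising these two mechanisms while keeping $q$ a product of arbitrarily large distinct primes is where the bulk of the technical work will live.
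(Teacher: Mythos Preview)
Your overall framework (invoke Proposition~\ref{RuzsaArg}, strip linear-only variables, split by the number of variables in the quadratic part) matches the paper exactly, and your instinct that a Weyl-type equidistribution input is the key new ingredient is correct. But several of your case assignments are off, and two of them hide genuine gaps.

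First, the one-variable case needs no new work: the quadratic part is $(\alpha+c_1x)(\alpha+c_2x)+(\alpha+c_3x)(\alpha+c_4x)$, so the coefficient of $\alpha(x)^2$ is always $2$, and Corollary~\ref{singleVarExpression} applies directly. The case you are worried about---where the $\alpha$-dependence degenerates---does not arise here.

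Second, you have entirely omitted the case $(\alpha(x)+c_1x)(\alpha(x)+c_2x)+(\beta(y)+c_3y)(\beta(y)+c_4y)+L$, two independent single-variable polynomials summed. This is precisely where the Weyl input is actually needed: knowing each summand takes \emph{few} values is useless (their sumset can be everything), so one must show each can be forced to take a \emph{small} value. The paper isolates this as Lemma~\ref{smallPolyVal}, a consequence of Weyl's inequality, and then feeds the same lemma into the strong identification-of-coordinates machinery (Proposition~\ref{strongIdent}).

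Third, your claim that the three- and four-variable cases are ``strictly easier'' by setting one map to zero is wrong. If you set $\delta(w)=-c_4w$ to kill a quadratic factor, the linear part still contains $(\mu_4-\lambda_4 c_4)w$, which you can no longer cancel since $\delta$ is spent. The paper's actual mechanism is to perform this cancellation \emph{coordinate-wise} over several primes, leaving on each coordinate a free map plus linear residues in all variables, and then invoke Proposition~\ref{strongIdent} (the ``strong identification'') to trap the coordinate-sum using Lemma~\ref{smallPolyVal}. This proposition is the technical heart of the proof and handles cases 2(a), 3(a), 3(b), 4(a) uniformly.

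Finally, the expression you flag as the ``main obstacle,'' $\alpha(x)\beta(y)+(\alpha(x)+x)(\beta(y)+y)+L_1(x)+L_2(y)$, is in fact the easiest two-variable case: Lemma~\ref{affineMapsLemma} applies for \emph{every} choice of linear part (its hypothesis is only $\nu_1,\nu_2\neq 0$), so no coefficients can ``conspire to defeat'' it.
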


\begin{proof} The approach here is similar to the one in the proof of the Theorem~\ref{1square}, however the expressions that arise in this case are more complicated and require new ideas. Once again, the proof is based on the Proposition~\ref{RuzsaArg}. As before, we split all expressions in their quadratic and linear parts, and we may assume that if a variable appears at all in an expression, it must appear in the quadratic part. Next, we consider all the possible cases for the quadratic part, and explain how to make the image of the expression small in each case separately. They are listed sorted by the support size and then by structure. We also have the freedom of renaming the variables. Again, we change the notation slightly; instead of $x_1, x_2, x_3, x_4$ and $\alpha_1, \alpha_2, \alpha_3, \alpha_4$ we use $x,y,z,w$ and $\alpha, \beta, \gamma, \delta$ respectively. The possible cases, w.l.o.g. are (all the $c_i$ are in $\{0,1\}$)  
\begin{enumerate}
\item Support of size 1.
	\begin{enumerate}
	\item The non-linear part must look like $(\alpha(x) + c_1 x)(\alpha(x) + c_2 x) + (\alpha(x) + c_3 x)(\alpha(x) + c_4 x)$.
	\end{enumerate}
\item Support of size 2. We have a few possibilities here.
	\begin{enumerate}
	\item $(\alpha(x) + c_1 x)(\alpha(x) + c_2 x) + (\alpha(x) + c_3 x)(\beta(y) + c_4 y)$
	\item $(\alpha(x) + c_1 x)(\beta(y) + c_2 y) + (\alpha(x) + c_3 x)(\beta(y) + c_4 y)$
	\item $(\alpha(x) + c_1 x)(\alpha(x) + c_2 x) + (\beta(y) + c_3 y)(\beta(y) + c_4 y)$
	\end{enumerate}
\item Support of size 3. We have a few possibilities here.
	\begin{enumerate}
	\item $(\alpha(x) + c_1 x)(\alpha(x) + c_2 x) + (\beta(y) + c_3 y)(\gamma(z) + c_4 z)$
	\item $(\alpha(x) + c_1 x)(\beta(y) + c_2 y) + (\alpha(x) + c_3 x)(\gamma(z) + c_4 z)$
	\end{enumerate}
\item Support of size 4.
	\begin{enumerate}
	\item The non-linear part must look like $(\alpha(x) + c_1 x)(\beta(y) + c_2 y) + (\gamma(z) + c_3 z)(\delta(w) + c_4 w)$.
	\end{enumerate}
\end{enumerate} 
We discuss each of these case separately. However, we use a different order than stated above and deal with easier cases first.\\

\noindent\textbf{Case 1(a).} This is immediate from Corollary~\ref{singleVarExpression}.\\

\noindent\textbf{Case 2(b).} If $c_1 = c_3$ or $c_2 = c_4$, modifying $\alpha(x)$ by adding a suitable multiple $\lambda x$ to it, and modyfing $\beta(y)$ accordingly, we may assume that the quadratic expression is exactly $2\alpha(x) \beta(y)$, which we have already done in Proposition~\ref{basicdiffmoduliadd} (notice that the condition on coefficients in that proposition is satisfied). Hence, w.l.o.g. $c_1 \not= c_3$ and $c_2 \not=c_4$. Then, (after a suitable modification of $\alpha_i$ by affine maps to make $c_1 = c_2 = 0$, $c_3, c_4 \not= 0$), we can apply the Lemma~\ref{affineMapsLemma}, to finish the proof in this case.\\

\noindent\textbf{Case 2(c).} The whole expression in this case is of the form $f_1(x) + f_2(y)$, where $f_1$ is a polynomial of degree at most 2 in $x$ and $\alpha(x)$ and $f_2$ is a polynomial of degree at most 2 in $y$ and $\beta(y)$. Note that we cannot use our arguments about single variable expressions here, as we would only get two sets $S_1, S_2 \subset \mathbb{Z}_q$ of size $o(q)$ such that $f_i$ always takes values in $S_i$, so we would only know that the whole expression takes values in $S_1 + S_2$ which could easily be the whole set of residues. Instead, we recall that the polynomials always attain a small value. This is the content of the next lemma, which is a well-known consequence of Weyl's inequality on exponential sums. Similar results appear in~\cite{Gowers}, we include a proof for completness.    

\begin{lemma}\label{smallPolyVal} Let $d$ be fixed. Then there is an absolute constant $C_d$ such that the following holds. Let $p$ be a prime, and let $a_d, a_{d-1}, \dots, a_0 \in \mathbb{Z}_p$ be given, with $a_d$ non-zero. Then the polynomial $a_dx^d + \dots + a_1 x + a_0$ attains a value in $\{-C_d p^{1-2^{-d}}, \dots, C_d p^{1-2^{-d}}\}$. \end{lemma}

Write $e_p(t)$ for the function $\exp(2\pi i t/p)$. The proof uses discrete Fourier transforms of functions $f \colon \mathbb{Z}_p \to \mathbb{C}$, which we define as $\hat{f}\colon \mathbb{Z}_p \to \mathbb{C}$ with $\hat{f}(r) = \sum_{x \in \mathbb{Z}_p} f(x) e_p(-rx)$. We refer readers to~\cite{Gowers} for more details.

\begin{proof} Write $f(x)$ for the polynomial $a_dx^d + \dots + a_1 x + a_0$. We begin by stating (a special case of) Weyl's inequality.
\begin{theorem}(Weyl's inequality.~\cite{Vaughan}) For every $\epsilon > 0$, and $d \in \mathbb{N}$, there is a constant $C_{\epsilon, d}$ such that for all primes $p$
$$\left|\sum_{x \in \mathbb{Z}_p} e_p(g(x))\right| \leq C_{\epsilon, d} p^{1 + \epsilon - 2^{1-d}}$$
holds for every polynomial $g \in \mathbb{Z}_p[X]$ of degree $d$.
\end{theorem}

Write $F(x)$ for the number of times the polynomial $f$ attains the value $x$. Hence, by Weyl's inequality, there is a constant $C$, independent of $p$ such that $|\hat{F} (r)| \leq C p^{1- 2^{-d}}$ for $r\not=0$, and $\hat{F}(0) = p$. Let $I$ be interval $\{-k, -k + 1, \dots, k\}$. Suppose that $f$ attains no value in $\{-2k, -2k + 1, \dots, 2k\}$. We have
$$\sum_x F(x) I \ast I(x) = 0.$$
Applying Parseval's formula and noting that $\hat{I}(r) \in \mathbb{R}$, we get that 
$$0 = \sum_r \hat{F}(r) \hat{I}(r)^2 = \hat{F}(0) \hat{I}(0)^2 + \sum_{r\not=0} \hat{F}(r) \hat{I}(r)^2 = p (2k+1)^2 + \sum_{r\not=0} \hat{F}(r) \hat{I}(r)^2.$$
Thus,
$$p (2k+1)^2 \leq \sum_{r\not=0} |\hat{F}(r)| \hat{I}(r)^2 \leq \left(\max_{r \not=0} |\hat{F}(r)|\right) \sum_s \hat{I}(s)^2 \leq C p^{1- 2^{-d}}p(2k+1).$$
From this we conclude that $2k+1 \leq C p^{1- 2^{-d}}$, as desired.  
 \end{proof}

Write $N$ for $C_d p^{1-2^{-d}}$. Now, consider $f_1(x)$ as a polynomial in $\alpha(x)$ for every fixed $x$. The lemma guarantees that we can define $\alpha(x)$ so that $f_1(x) \in \{-N, -N + 1, \dots , N\}$. Similarly, for every $y$, we can pick $\beta(y)$ so that $f_2(y) \in \{-N, -N+1, \dots, N\}$, hence we always have $f_1(x) + f_2(y) \in \{-2N, -2N+1, \dots, 2N\}$, as desired.\\

\noindent\textbf{Case 3(a).} We shall take $q$ of the form $q_1 q_2 q_3$, where $q_1, q_2, q_3$ are coprime, and each is a product of distinct arbitrarily large primes. As always, we identify $\mathbb{Z}_q \cong \mathbb{Z}_{q_1} \oplus \mathbb{Z}_{q_2} \oplus \mathbb{Z}_{q_3}$, and we aim to use the identification of coordinates idea. Thus, we set $\alpha_1(x) := -c_1 x_1, \alpha_2(x) := -c_2 x_2$, so that $(\alpha(x) + c_1 x)(\alpha(x) + c_2 x)$ has second and third coordinates equal to zero. We also set $\beta_1(y) := -c_3 y_1, \beta_3(y) := -c_3 y_3$ and $\gamma_2(z) := -c_4 z_2, \gamma_3(z) := -c_4 z_3$. Note that we still have freedom of choice for $\alpha_3, \beta_2, \gamma_1$. Let the linear part of the expression be $d_1 \alpha(x) + d_2 x + d_3 \beta(y) + d_4y + d_5 \gamma(z) + d_6 z$, where the coefficients $d_i$ have the property that $d_{2i} \not= 0$ implies $d_{2i-1} \not= 0$ (since the linear part comes from $\mathbb{N}$-linear combination of $\alpha(x)$ and $\alpha(x) + x$, etc.). The expression becomes
\begin{equation*}\begin{split}
&((-d_1 c_1 + d_2) x_1 + (-d_3c_3 + d_4)y_1 + d_5 \gamma_1(z) + d_6 z_1,\\
&(-d_1 c_2 + d_2) x_2 + d_3 \beta_2(y) + d_4y_2 + (-c_4 d_5 + d_6) z_2,\\
&(\alpha_3(x) + c_1 x_3) (\alpha_3(x) + c_2 x_3) + d_1 \alpha_3(x) + d_2 x_3 + (-d_3 c_3 + d_4)y_3 + ( -d_5 c_4 + d_6) z_3).\end{split}\end{equation*}

We combine the identification of coordinates idea with the fact that polynomials have relatively dense sets of values in the next proposition.

\begin{proposition}\label{strongIdent}(Strong version of the identification of coordinates) Fix $n, d \in \mathbb{N}$. Then there are constants $\epsilon, C > 0$ such that the following holds.  Let $d_1, d_2, \dots, d_n \in \mathbb{N}$ all be at most $d$. Let $2p_n > p_1 \geq p_2 \geq \dots \geq p_n$ be primes. Write $r = p_1p_2 \dots p_n$. Next, let $f_{i,j}\colon\mathbb{Z}_r \to \mathbb{Z}_{p_j}$ be arbitrary maps for every $1 \leq i,j \leq n$. Let for every $1 \leq i \leq n$, $c_i \in \mathbb{Z}_{p_i}^\times$. Finally, let $g_{i, j}\colon \mathbb{Z}_r \to \mathbb{Z}_{p_i}$ be also arbitrary functions for every $1\leq i \leq n, 1 \leq j \leq d_i-1$. Then, we can find maps $\alpha_i \colon \mathbb{Z}_r \to \mathbb{Z}_{p_i}$ such that the expression
\begin{equation*}\begin{split}
(&f_{1, 1}(x_1) + f_{2,1}(x_2) + \dots + f_{n,1}(x_n) + c_1 \alpha_1(x_1)^{d_1} + g_{1,d_1 - 1} (x_1) \alpha_1(x_1)^{d_1 - 1} + \dots + g_{1,1}(x_1) \alpha_1(x_1),\\
& f_{1, 2}(x_1) + f_{2,2}(x_2) + \dots + f_{n,2}(x_n) + c_2 \alpha_2(x_2)^{d_2} + g_{2,d_2 - 1} (x_2) \alpha_2(x_2)^{d_2 - 1} + \dots + g_{2,1}(x_2) \alpha_2(x_2),\\
& \vdots\\
& f_{1, n}(x_1) + f_{2,n}(x_2) + \dots + f_{n,n}(x_n) + c_n \alpha_n(x_n)^{d_n} + g_{n,d_n - 1} (x_n) \alpha_n(x_n)^{d_n - 1} + \dots + g_{n,1}(x_n) \alpha_n(x_n))
\end{split}\end{equation*} 
takes at most $C p_n^{-\epsilon} p_1 p_2 \dots p_n$ values as $x_1, x_2, \dots, x_{n-1}$ and $x_n$ range over all values in $\mathbb{Z}_r$. %(Here $x_{i,j}$ stands for the $j$-th coordinate of the $i$-th variable $x_i \in \mathbb{Z}_{p_1} \oplus \mathbb{Z}_{p_2} \oplus \dots \oplus \mathbb{Z}_{p_n}$.)
\end{proposition}

Throughout the paper, we will use the prime number theorem (\cite{Ivic}) without explicitly mentioning it. 

\begin{proof} %Let us start by defining maps $\operatorname{mod}_{p,p'}\colon \mathbb{Z}_{p}\to \mathbb{Z}_{p'}$ for any pair $p, p'$ of prime numbers by sending  $x \in \mathbb{Z}_{p}$ to its natural image $x' \in \{0, 1, \dots, p-1\} \subset \mathbb{Z}$, and then mapping $x'$ to its residue $y \in \mathbb{Z}_{p'}$.\\
%Firslty, each map $\operatorname{mod}_{p,p'}$ is a composition $\iota_{p} \circ \pi_{p'}$, where $\iota_p\colon \mathbb{Z}_p\to \{0,1, \dots, p-1\}\subset \mathbb{Z}$ is the natural map, and $\pi_{p'}\colon\mathbb{Z}\to\mathbb{Z}_{p'}$ is the projection map. We know that $\pi_{p'}$ is a homomorphism. Observe that $\iota_p$ is also close to being an additive homomorphism, and so is the map $\operatorname{mod}_{p,p'}$.

Write $q$ for $p_n$ (in fact any prime close to $p_1, p_2, \dots, p_n$ would work). The main idea is to pick $\alpha_1, \dots, \alpha_n$ so that every value $(v_1, v_2, \dots, v_n)$ attained by the expression satisfies $\sum_{i=1}^n\mod_{p_i, q} (v_i) \in S$, for a small subset $S \subset \mathbb{Z}_q$. Partitioning $\mathbb{Z}_{p_1} \oplus \mathbb{Z}_{p_2} \oplus \dots \oplus \mathbb{Z}_{p_n}$ into cosets of $\{0\} \times \dots \times \{0\} \times \mathbb{Z}_{p_n}$, we see the set of values of the expression can take only at most $|S|$ values on each coset, and thus a small number of values in total.\\

We use the Lemma~\ref{smallPolyVal} in order to define $\alpha_i$. Recall that the lemma gives $C', \epsilon > 0$ such that every non-constant polynomial of degree at most $d$ in $\mathbb{Z}_{p_i}$ for any $i$, takes a value in $\{0, 1, \dots, C' q^{1- \epsilon}\}$ (modify the constant coefficient if necessary). For every $i$, we define $\alpha_i$ as follows. We apply the lemma for every fixed $x_i \in \mathbb{Z}_{p_1} \oplus \mathbb{Z}_{p_2} \oplus \dots \oplus \mathbb{Z}_{p_n}$ to the polynomial
$$ c_i t^{d_i} + \sum_{j = 1} ^{d_i - 1} g_{i, j}(x_i) t^j + \sum_{j = 1} ^ n \operatorname{mod}_{p_j, p_i} (f_{i,j} (x_i)).$$
Hence, we can pick $t$, such that this expression takes value in $\{0, 1, \dots, C' q^{1- \epsilon}\} \subset \mathbb{Z}_{p_i}$. We set $\alpha_i(x_i) := t$. Therefore, we have defined $\alpha_i\colon \mathbb{Z}_{p_1} \oplus \mathbb{Z}_{p_2} \oplus \dots \oplus \mathbb{Z}_{p_n} \to \mathbb{Z}_{p_i}$, so that 
$$\operatorname{mod}_{p_i, q} \left(c_i \alpha_i(x_i)^{d_i} + \sum_{j = 1} ^{d_i - 1} g_{i, j}(x_i) \alpha_i(x_i)^j + \sum_{j = 1} ^ n \operatorname{mod}_{p_j, p_i} (f_{i,j} (x_i))\right) \in S \subset \mathbb{Z}_q,$$
where $S = \operatorname{mod}_{p_i, q} (\{0, 1, \dots, C' q^{1-\epsilon}\}) = \{0, 1, \dots, C' q^{1-\epsilon}\}$. To finish the proof, we apply the Lemma~\ref{modProps}.\\

Note that we have
\begin{equation}\begin{split}
&\sum_{i=1}^n \operatorname{mod}_{p_i,q}\left(\sum_{j=1}^n f_{j, i}(x_j) + c_i \alpha_i(x_i)^{d_i} + \sum_{j = 1}^{d_i - 1} g_{i,j} (x_i) \alpha_i(x_i)^j\right)\\
\overset{O_n(1)}{=} &\sum_{i=1}^n \left(\sum_{j=1}^n \operatorname{mod}_{p_i,q}(f_{j, i}(x_j)) + \operatorname{mod}_{p_i,q}(c_i \alpha_i(x_i)^{d_i}) + \sum_{j = 1}^{d_i - 1} \operatorname{mod}_{p_i,q}(g_{i,j} (x_i) \alpha_i(x_i)^j)\right)\\
\overset{O_n(1)}{=} &\left(\sum_{i=1}^n\sum_{j=1}^n \operatorname{mod}_{p_j,q} \circ \operatorname{mod}_{p_i,p_j}(f_{j, i}(x_j))\right) + \left(\sum_{i=1}^n\left(\operatorname{mod}_{p_i,q}(c_i \alpha_i(x_i)^{d_i}) + \sum_{j = 1}^{d_i - 1} \operatorname{mod}_{p_i,q}(g_{i,j} (x_i) \alpha_i(x_i)^j)\right)\right)\\
= &\left(\sum_{i=1}^n\sum_{j=1}^n \operatorname{mod}_{p_i,q} \circ \operatorname{mod}_{p_j,p_i}(f_{i, j}(x_i))\right) + \left(\sum_{i=1}^n\left(\operatorname{mod}_{p_i,q}(c_i \alpha_i(x_i)^{d_i}) + \sum_{j = 1}^{d_i - 1} \operatorname{mod}_{p_i,q}(g_{i,j} (x_i) \alpha_i(x_i)^j)\right)\right)\\
= &\sum_{i=1}^n \operatorname{mod}_{p_i,q}\left(\operatorname{mod}_{p_j,p_i}(f_{i, j}(x_i)) + c_i \alpha_i(x_i)^{d_i} + \sum_{j = 1}^{d_i - 1}g_{i,j} (x_i) \alpha_i(x_i)^j\right) \in nS
\end{split}\end{equation}
We conclude that values $(v_1, v_2, \dots, v_n)$ attained by the expression with the maps $\alpha_i$ defined as above satisfy 
$$\sum_{i=1}^n\operatorname{mod}_{p_i,q} (v_i) \in nS + T,$$
for a set $T$ of size at most $O_n(1)$. Since $nS = \{0, 1, \dots, nC' q^{1-\epsilon}\} \subset \mathbb{Z}_q$, the expression takes at most $O_{n,d}(p_1 p_2 \dots p_{n-1} p_n^{1-\epsilon})$ values, as desired.\end{proof}

The case 3(a) now follows from a straightforward application of the Proposition~\ref{strongIdent}.\\

We deal with the remaining cases in a similar fashion.\\

\noindent\textbf{Case 2(a).} Let the linear part of the expression be $\lambda_1 \alpha(x) + \mu_1 x + \lambda_2 \beta(y) + \mu_2 y$. We shall take $q = q_1 q_2$, for coprime $q_1$ and $q_2$, with $\mathbb{Z}_q \cong \mathbb{Z}_{q_1} \oplus \mathbb{Z}_{q_2}$. We set $\alpha_1(x) := -c_3 x_1$ and $\beta_2(y):= -c_4 y_2$. It remains to choose $\alpha_2\colon\mathbb{Z}_{q_1} \oplus \mathbb{Z}_{q_2} \to \mathbb{Z}_{q_2}$ and $\beta_1\colon\mathbb{Z}_{q_1} \oplus \mathbb{Z}_{q_2} \to \mathbb{Z}_{q_1}$ so that the expression
\begin{equation*}\begin{split}
(&(c_1 - c_3)(c_2 - c_3) x_1^2 - c_3\lambda_1 x_1 + \mu_1 x_1 + \lambda_2 \beta_1(y) + \mu_2 y_1,\\
&(\alpha_2(x) + c_1 x_2)(\alpha_2(x) + c_2 x_2) + \lambda_1 \alpha_2(x) + \mu_1 x_2 - c_4 \lambda_2 y_2 + \mu_2 y_2)\\
=(&(c_1 - c_3)(c_2 - c_3) x_1^2 + (\mu_1 - c_3 \lambda_1) x_1 + \mu_2 y_1 + \lambda_2 \beta_1(y),\\
  &c_1c_2 x_2^2 + \mu_1 x_2 + (\mu_2 - c_4 \lambda_2)y_2 + \alpha_2(x)^2 + ((c_1 + c_2) x_2 +\lambda_1) \alpha_2(x))\\
\end{split}
\end{equation*}
takes small number of values. But, recalling that $\lambda_2 = 0$ implies $\mu_2 = 0$, this follows directly from the Proposition~\ref{strongIdent}, and we may take $q_1, q_2$ to be prime.\\

\noindent\textbf{Case 3(b).} Let the linear part of the expression be $\lambda_1 \alpha(x) + \mu_1 x + \lambda_2 \beta(y) + \mu_2 y + \lambda_3 \gamma(z) + \mu_3 z$. We shall take $q = q_1 q_2 q_3$, for coprime $q_1, q_2$ and $q_3$, with $\mathbb{Z}_q \cong \mathbb{Z}_{q_1} \oplus \mathbb{Z}_{q_2} \oplus \mathbb{Z}_{q_3}$. We set $\alpha_1(x) := -c_1 x_1, \alpha_2(x) := -c_3 x_2, \beta_2(y) := -c_2y_2, \beta_3(y):= -c_2 y_3, \gamma_1(z) := -c_4 z_1$ and $\gamma_3(z):= -c_4 z_3$. It remains to choose $\alpha_3\colon\mathbb{Z}_{q_1} \oplus \mathbb{Z}_{q_2} \oplus \mathbb{Z}_{q_3} \to \mathbb{Z}_{q_3}, \beta_1\colon \mathbb{Z}_{q_1} \oplus \mathbb{Z}_{q_2} \oplus \mathbb{Z}_{q_3} \to \mathbb{Z}_{q_1}$ and $\gamma_2\colon\mathbb{Z}_{q_1} \oplus \mathbb{Z}_{q_2} \oplus \mathbb{Z}_{q_3}\to \mathbb{Z}_{q_2}$ so that the expression
\begin{equation*}\begin{split}
(&(-c_1\lambda_1 + \mu_1)x_1 + \lambda_2 \beta_1(y) + \mu_2 y_1 + (-c_4 \lambda_3 + \mu_3) z_1,\\
&(-c_3\lambda_1 + \mu_1)x_2 + (-c_2\lambda_2 + \mu_2)y_2 + \lambda_3\gamma_2(z) + \mu_3 z_2,\\
& \lambda_1 \alpha_3(x) + \mu_1 x_3 + (-c_2 \lambda_2 + \mu_2) y_3 + (-c_4 \lambda_3 + \mu_3) z_3)\end{split}\end{equation*}
takes small number of values. Once again, recalling that $\lambda_i = 0$ implies $\mu_i = 0$, this follows directly from the Proposition~\ref{strongIdent}, and we may take $q_1, q_2$ and $q_3$ to be prime.\\

\noindent\textbf{Case 4(a).} Let the linear part of the expression be $\lambda_1 \alpha(x) + \mu_1 x + \lambda_2 \beta(y) + \mu_2 y + \lambda_3 \gamma(z) + \mu_3 z + \lambda_4 \delta(w) + \mu_4w$. We shall take $q = q_1 q_2 q_3 q_4$, for coprime $q_1, q_2, q_3$ and $q_4$, with $\mathbb{Z}_q \cong \mathbb{Z}_{q_1} \oplus \mathbb{Z}_{q_2} \oplus \mathbb{Z}_{q_3} \oplus \mathbb{Z}_{q_4}$. We set
\begin{equation*}\begin{split}
&\beta_1(y):= -c_2 y_1, \gamma_1(z)\colon= -c_3 z_1, \delta_1(w) \colon= -c_4 w_1,\\
&\alpha_2(x) \colon= -c_1 x_2, \gamma_2(z)\colon= -c_3 z_2, \delta_2(w) \colon= -c_4 w_2,\\
&\alpha_3(x) \colon= -c_1 x_3, \beta_3(y):= -c_2 y_3, \delta_3(w) \colon= -c_4 w_3,\\
&\alpha_4(x) \colon= -c_1 x_4, \beta_4(y):= -c_2 y_4, \gamma_4(z)\colon= -c_3 z_4.
\end{split}\end{equation*}
We use the Proposition~\ref{strongIdent} to find $\alpha_1, \beta_2, \gamma_3, \delta_4$ so that the expression
\begin{equation*}\begin{split}
(&\lambda_1 \alpha_1(x) + \mu_1 x_1 + (-c_2 \lambda_2 + \mu_2) y_1 + (-c_3 \lambda_3 + \mu_3) z_1 + (-c_4 \lambda_4 + \mu_4) w_1,\\
&(-c_1 \lambda_1 + \mu_1) x_2 + \lambda_2 \beta_2(y) + \mu_2 y_2 + (-c_3 \lambda_3 + \mu_3) z_2 + (-c_4 \lambda_4 + \mu_4) w_2,\\
&(-c_1 \lambda_1 + \mu_1) x_3 + (-c_2 \lambda_2 + \mu_2) y_3 + \lambda_3 \gamma_3(z) + \mu_3z_3 + (-c_4 \lambda_4 + \mu_4) w_3,\\
&(-c_1 \lambda_1 + \mu_1) x_4 + (-c_2 \lambda_2 + \mu_2) y_4 + (-c_3 \lambda_3 + \mu_3) z_4 + \lambda_4 \delta_4(w)+ \mu_4 w_4)
\end{split}\end{equation*}
takes small number of values. This completes the proof of the Theorem~\ref{2squares}.\end{proof}

\subsection{Further discussion of the identification of coordinates idea}

As we have seen in the proof of the Theorem~\ref{2squares}, the Proposition~\ref{strongIdent} was used in a very similar fashion for several cases of expressions. The goal of this short subsection is to take this approach further and see what expressions can be handled using this idea.\\

We temporarily return to the notation of $x_i$ for the variables and $\alpha_i$ for the maps. The value of $x_i$ at coordinate $c$ is denoted by $x_{i,c}$. Observe that when we use Proposition~\ref{strongIdent}, we have to pick some of the maps $\alpha_{i,c}$ to cancel out the mixed quadratic terms like $\alpha_{1,c}(x_1) (\alpha_{2,c}(x_2) + x_{2,c})$. In the proof of the Theorem~\ref{2squares} in the last few cases, given an expression, we used a different coordinate $c$ for every variable $x_i$, and we picked $\alpha_{j,c}$ for $j \not= i$, so that the mixed quadratic terms dissappear. Our goal now is to put all these ideas together in a single proposition. First, we need to set up some useful definitions.\\

Fix an expression $E$ in variables $x_1, x_2, \dots, x_n$. Define a graph $G_E$ on vertices $\{x_1, x_2, \dots, x_n\}$ by adding an edge $x_i x_j$ for every term of the form $(\alpha_i(x_i) + c x_i) (\alpha_j(x_j) + d x_j)$ with $i\not=j$, with multiple edges allowed (so $x_i x_j$ appears the same number of times the relevant terms occur in $E$).

\begin{proposition}\label{acyclicIdent}(Acyclic version of the identification of the coordinates.) Let $E$ be a quadratic expression such that $G_E$ has no cycles (in particular, no repeated edges). Then there is an absolute constant $\epsilon > 0$ such that the following holds. We can find $q$, a product of distinct, arbitrarily large primes, and maps $\alpha_1, \dots, \alpha_n\colon\mathbb{Z}_q\to\mathbb{Z}_q$ such that $E$ takes at most $O(q^{1-\epsilon})$ values. \end{proposition}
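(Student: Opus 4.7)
The plan is to reduce, via a uniform construction, to the hypotheses of Proposition~\ref{strongIdent}. Set $q = p_1 p_2 \cdots p_n$ for distinct large primes identified with $\mathbb{Z}_{p_1} \oplus \cdots \oplus \mathbb{Z}_{p_n}$, and think of coordinate $i$ as the ``home'' of $x_i$, reserving $\alpha_{i,i}$ to be chosen later via Proposition~\ref{strongIdent}. The goal is to prescribe all the off-diagonal components $\alpha_{j,i}$ (with $i \neq j$) as linear maps in $x_{j,i}$ so that every mixed quadratic term in $E$ vanishes at coordinate $i$ for every $i$.

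Two kinds of mixed terms must be killed at coordinate $i$: edges $\{x_i, x_j\}$ of $G_E$ incident to $x_i$, and edges $\{x_s, x_t\}$ of $G_E$ with $s, t \neq i$. For the first kind the choice is forced: for each term $(\alpha_i + a x_i)(\alpha_j + b x_j)$ I set $\alpha_{j,i}(x_j) = -b\, x_{j,i}$, killing the second factor at coordinate $i$. For the second kind I orient each connected component of $G_E - x_i$ (itself a forest, being a subgraph of the forest $G_E$) by choosing a root among the neighbors of $x_i$ whenever the component contains one, and arbitrarily otherwise, then directing every edge toward that root; each non-root vertex $x_j$ thus has a unique out-edge $\{x_j, x_k\}$, which prescribes $\alpha_{j,i}(x_j) = -p\, x_{j,i}$, where $p$ is the coefficient of $x_j$ in the corresponding term of $E$. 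Because $G_E$ is a forest, any two distinct neighbors of $x_i$ lie in distinct components of $G_E - x_i$, so each affected component contains at most one neighbor of $x_i$, the chosen roots coincide with those neighbors, and the two prescriptions above never collide.

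With all $\alpha_{j,i}$, $j \neq i$, now fixed as linear maps in $x_{j,i}$, each mixed quadratic term acquires a vanishing factor at coordinate $i$, each single-variable quadratic term in $x_j$ with $j \neq i$ collapses to a polynomial of degree at most two in $x_{j,i}$, and each remaining linear term becomes affine in $x_{j,i}$. Thus at coordinate $i$ the expression takes precisely the shape
\[
c_i \, \alpha_{i,i}(x_i)^{d_i} + g_{i, d_i - 1}(x_i)\, \alpha_{i,i}(x_i)^{d_i - 1} + \cdots + g_{i,1}(x_i)\, \alpha_{i,i}(x_i) + \sum_{j=1}^n f_{j,i}(x_j),
\]
with $d_i \in \{1, 2\}$ and $c_i$ a fixed constant in $\mathbb{Z}_{p_i}$ that is nonzero once the primes are chosen large enough. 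Proposition~\ref{strongIdent} then supplies the free components $\alpha_{i,i}$ and delivers the bound $O(q^{1-\epsilon})$.

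The step I expect to be least routine, and the place where acyclicity is genuinely essential, is the compatibility check between the two prescriptions for $\alpha_{j,i}$: a triangle $x_1 x_2 x_3$ already obstructs the construction at coordinate $1$, since $x_2$ and $x_3$ are both neighbors of $x_1$ lying in the same component $\{x_2, x_3\}$ of $G_E - x_1$, so only one of them can serve as the root, leaving the other with two generically incompatible demands on $\alpha_{\cdot,1}$; the same obstruction propagates to every cycle. Minor degenerate cases---coordinates $i$ at which no single-variable quadratic in $x_i$ appears in $E$, so that $c_i$ vanishes---are handled either by dropping $d_i$ to $1$ when a linear $\alpha_i$-term remains, or, failing that, by fixing $\alpha_{i,i} \equiv 0$ and absorbing the coordinate into the $f_{j,i}$ bookkeeping, with no effect on the final power savings.
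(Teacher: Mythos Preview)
Your argument is correct and is essentially the paper's proof, phrased in the language of rooted forests rather than graph traversal. The paper simply runs a traversal of $G_E$ starting at $x_c$ (and from arbitrary roots in the other components), setting $\alpha_{j,c}(x_j)=-b\,x_{j,c}$ each time a new vertex $x_j$ is reached along an edge with $x_j$-coefficient $b$; acyclicity guarantees each $\alpha_{j,c}$ is assigned at most once, and then Proposition~\ref{strongIdent} is invoked. Your two-step description (edges incident to $x_i$ handled first, then an orientation of $G_E-x_i$ toward the neighbours of $x_i$) produces exactly the same assignments, and your compatibility check is the explicit version of the paper's one-line ``we reach every variable at most once''.

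One point deserves a sharper justification than ``no effect on the final power savings''. When no single-variable quadratic in $x_i$ and no linear $\alpha_i$-term is present, you have $c_i=0$; but then $x_i$ appears in $E$ only through mixed terms, and at every coordinate $j$ each such term has been made to vanish identically, so neither $\alpha_{i,j}$ nor $x_{i,j}$ survives in $E_j$. Hence $f_{i,j}\equiv 0$ for every $j$, and in the proof of Proposition~\ref{strongIdent} the quantity to be made small for index $i$ is already identically zero. This is why the degenerate coordinate costs nothing; your phrase ``absorbing the coordinate into the $f_{j,i}$ bookkeeping'' is correct but hides this observation.
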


\begin{proof} As promised, we will take $q= q_1 q_2 \dots q_n$, with $q_i$ coprime products of distinct primes, suitably chosen. As always, view $\mathbb{Z}_q$ as the direct sum $\mathbb{Z}_{q_1} \oplus \dots \oplus \mathbb{Z}_{q_n}$. Let $c \in [n]$ be an arbitrary coordinate. We start from $x_c$ and traverse the graph $G_E$. (If $G_E$ is disconnected, pick arbitrary vertices in all other components to start the traversal from. For each such starting vertex $x_i$, $i \not= c$, set $\alpha_{i,c} = 0$.) Since the graph is acyclic, we reach every variable at most once, and we visit every edge. When we move along the edge $x_i x_j$, from $x_i$ to $x_j$, that means that there is a term $(\alpha_i(x_i) + a x_i)(\alpha_j (x_j) + bx_j)$ in the expression, and we set $\alpha_{j,c}(x_j) \colon= -b x_{j,c}$, to make the term vanish. Since this is the first time we reach $x_j$, there are no issues with defining $\alpha_{j,c}$.\\
\indent After this procedure, we have defined $\alpha_{i,j}$ for $i \not= j$, so that for every coordinate $c$, the expression $E_c$ no longer has mixed quadratic terms. We still have the freedom of choosing $\alpha_{c,c}$, so we now may apply the Proposition~\ref{strongIdent} to finish the proof.\end{proof}

As we shall see later, depending on the structure of the graph $G_E$, it is not always possible to choose some of the maps $\alpha_{i,c}$ so that the mixed quadratic terms vanish, so there is no obvious way to make the Proposition~\ref{acyclicIdent} more general.

\section{Sets $A$ with small $3A^2 + kA$}

In this section we prove the final case of the main theorem.

\begin{theorem}\label{3squares} For any $k \in \mathbb{N}_0$ and any $\epsilon > 0$, there is a natural number $q$, which is a product of distinct, arbitrarily large primes, and a set $A \subset \mathbb{Z}_q$ such that $A-A = \mathbb{Z}_q$, while $|3A^2 + kA| < \epsilon q$. \end{theorem}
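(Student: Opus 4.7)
The plan is to mirror the template used for Theorems~\ref{1square} and~\ref{2squares}: invoke Proposition~\ref{RuzsaArg} to reduce to controlling finitely many formal quadratic-plus-linear expressions, absorb any linear terms whose variables do not appear in the quadratic part (by making the relevant $\alpha_i$ affine), and then deal with each possible shape of quadratic part separately. For $3A^2 + kA$ the quadratic part is a sum of three products $(\alpha_i(x_i)+cx_i)(\alpha_j(x_j)+c'x_j)$, so I would enumerate shapes by the multigraph $G_E$ on the participating variables, where each product contributes an edge (a loop if the two factors share the variable). This gives a finite list indexed by the support size (from $1$ up to $6$) and by the multigraph type.

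The bulk of the list is then dispatched mechanically. Single-variable shapes follow from Corollary~\ref{singleVarExpression}. Whenever $G_E$ is a forest (no cycles and no repeated edges), Proposition~\ref{acyclicIdent} applies directly; this covers, in particular, every shape whose support has $4$, $5$, or $6$ variables, as well as most of the $3$-variable cases. Two-variable shapes with a loop-plus-edge or two-loops-plus-edge structure reduce to the arguments of Cases~2(a) and~2(c) in the proof of Theorem~\ref{2squares}, absorbing the extra summand into the linear part after using Lemma~\ref{smallPolyVal}. A double edge between two variables, augmented by any third product, is handled by Lemma~\ref{affineMapsLemma} on one coordinate and identification of coordinates on the others.

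The genuine obstacle, flagged in Section~2, is the cyclic shape
\[(\alpha(x)+c_1x)(\beta(y)+c_2y)+(\alpha(x)+c_3x)(\beta(y)+c_4y)+(\alpha(x)+c_5x)(\gamma(z)+c_6z),\]
together with the pure triangle on $\{x,y,z\}$. Here identification of coordinates fails: the variable $x$ is linked to both $y$ and $z$ through essentially different products, so no coordinatewise choice of the other maps can simultaneously cancel all cross terms. For these shapes the plan is a probabilistic construction. Take $q=q_1q_2$ with $q_1,q_2$ coprime products of large primes. On the second coordinate use identification of coordinates as usual to handle the linear part. On the first coordinate, sample $\alpha_1,\beta_1,\gamma_1$ randomly from distributions chosen so that each individual product concentrates in a small window (exploiting Lemma~\ref{smallPolyVal} for the inner affine-like expressions); then compute the expected cardinality of the image of the full expression, and apply a second-moment or first-moment-plus-union-bound argument to show that with positive probability the image is of size at most $q^{1-\delta}$ for some absolute $\delta>0$.

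The step I expect to be the hardest is precisely this probabilistic argument: one must pick the sampling distribution finely enough that the three cross-correlated quadratic summands combine into a genuinely sublinear total image, and then control the worst case over all $(x,y,z)$ rather than the average. All remaining shapes should require only bookkeeping on top of the tools already developed in Sections~4 and~5. Combining the probabilistic case with the routine cases and feeding the result back into Proposition~\ref{RuzsaArg} yields the desired set $A$ with $A-A=\mathbb{Z}_q$ and $|3A^2+kA|<\epsilon q$, with $q$ a product of arbitrarily large distinct primes.
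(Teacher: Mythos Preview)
Your scaffold---invoke Proposition~\ref{RuzsaArg}, enumerate by the multigraph $G_E$, dispatch forests via Proposition~\ref{acyclicIdent}---matches the paper. (Minor correction: not every $4$-variable shape is a forest; the double edge $x_1x_2$ together with the disjoint edge $x_3x_4$ is not, though your separate double-edge clause does cover it.) There are, however, two genuine gaps.

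First, your two-variable case analysis misses the shape ``loop on $x$ plus non-factorizable double edge $xy$'', i.e.\ quadratic part
\[(\alpha(x)+c_1x)(\alpha(x)+c_2x)+(\alpha(x)+c_3x)(\beta(y)+c_4y)+(\alpha(x)+c_5x)(\beta(y)+c_6y)\]
with $c_3\ne c_5$ and $c_4\ne c_6$. This does not reduce to Case~2(a) of Theorem~\ref{2squares} (only two quadratic terms there), and routing it through Lemma~\ref{affineMapsLemma} fails: once $\alpha,\beta$ are forced affine to kill the double edge, the loop becomes a fixed quadratic in $x$ with no freedom left, so that coordinate is surjective; on any other coordinate one cannot kill both cross terms while keeping a map free. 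The paper handles this case by a dedicated probabilistic lemma (Lemma~\ref{2varsProb}): sample $\alpha(x)$ uniformly and show that each $\beta(y)$ can then be chosen to avoid the value $0$. The crucial structural input is that the expression is genuinely quadratic in $\alpha(x)$; if the bad event for some $y$ occurs there is an induced bijection $v\mapsto x(v)$ on $\mathbb{Z}_p$, and any fixed bijection pins $\alpha$ down to at most $2^p$ possibilities, so $p!\cdot 2^p/(p-1)^p\to 0$ wins the union bound.

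Second, your probabilistic plan for the cyclic three-variable shapes is both mis-targeted and too vague. The triangle $xy,yz,zx$ needs no randomness at all: in the generic case $c_1\ne c_6$, $c_2\ne c_3$, $c_4\ne c_5$ an explicit affine choice of $\alpha,\beta,\gamma$ makes the whole expression constant; in the degenerate cases the paper uses an extended identification over \emph{five} primes, approximating the residual product $y_3z_3$ across two auxiliary coordinates. For the shape $xy,xy,xz$, an explicit constant solution again exists unless one specific linear relation among the coefficients holds; only on that degenerate slice is probability invoked (Proposition~\ref{finalPQ} via Lemma~\ref{probLemma2}), and that argument is another ``random $\alpha$, bijection count'' refinement of Lemma~\ref{2varsProb}. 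Your proposed sampling so that each product concentrates in a small window does not control the \emph{sum} of three correlated products, and without isolating the algebraic feature that drives the bijection count (one map entering quadratically, or linearly with invertible coefficient) there is no reason a first- or second-moment bound should close.
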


\begin{proof} We proceed like in the proofs of Theorems~\ref{1square} and~\ref{2squares}, except that the details become once again more complicated and the ideas we developed so far, culminating in Proposition~\ref{acyclicIdent}, do not suffice. As usual, the proof is based on Proposition~\ref{RuzsaArg}. We split all expressions in their quadratic and linear parts, and we may assume that if a variable appears at all in an expression, it must appear in the quadratic part. In the first part of the discussion of the possible expressions, we use the notation $x_i$ for variables and $\alpha_i$ for maps, as there can be upto 6 variables involved. Later, we again switch to $x, y, z$ and $\alpha, \beta, \gamma$ notation.\\

Firstly, by Corollary~\ref{singleVarExpression}, we only need to consider expressions with at least two varaibles. Next, we use the Proposition~\ref{acyclicIdent} to treat the expressions with at least 4 variables. We look at the graph $G_E$. Note that if we have an isolated vertex $x_i$ in $G_E$, since $x_i$ appears in the quadratic part, we must have term of the form $(\alpha_i(x_i) + c_1 x_i) (\alpha_i(x_i) + c_2 x_i)$ in $E$. Hence, the number of isolated vertices $v_{is}$ plus the number of edges $e$ is at most 3, which is the number of quadratic terms in $E$.\\

\noindent\textbf{Expression $E$ with exactly 6 variables.} We look at $G_E$. It is a graph on 6 vertices, with $v_{is} + e \leq 3$. Hence, it is a perfect matching, which is acyclic, so the Proposition~\ref{acyclicIdent} applies.\\

\noindent\textbf{Expression $E$ with exactly 5 variables.} Looking at $G_E$, which is a graph on 5 vertices with $v_{is} + e \leq 3$, we see that at most one vertex can have degree greater than 1. The graph $G_E$ is acyclic, so the Proposition~\ref{acyclicIdent} applies.\\

\noindent\textbf{Expression $E$ with exactly 4 variables.} Once again, we analyse $G_E$. It is a graph on 4 vertices with $v_{is} + e \leq 3$. The only way to get a cycle is if the graph has a double edge $x_1 x_2$ and an edge $x_3 x_4$ (after a suitable renaming of variables). Thus, the quadratic part of $E$ is of the form 
$$(\alpha_1(x_1) + c_1 x_1)(\alpha_2(x_2) + c_2 x_2) + (\alpha_1(x_1) + c'_1 x_1)(\alpha_2(x_2) + c'_2 x_2) + (\alpha_3(x_3) + c_3 x_3)(\alpha_4(x_4) + c_4 x_4),$$
where $c_1, c_2, c'_1, c'_2, c_3, c_4 \in\{0,1\}$. If $c_1 = c'_1$ or $c_2 = c'_2$, we can rewrite the quadratic part as a linear combination of only two quadratic terms, so that the graph $G_E$ becomes a matching, and therefore acyclic. Thus, assume that $c_1 \not= c'_1$ and $c_2 \not=c'_2$. But, using the affine maps solution from the Lemma~\ref{affineMapsLemma} we can cancel all the terms in $E$ that involve $x_1$ and $x_2$. Then, w.l.o.g. $E$ becomes an expression with quadratic term
$$(\alpha_3(x_3) + c_3 x_3)(\alpha_4(x_4) + c_4 x_4)$$ 
which we have already done using the basic version of the identification of coordinates idea in Lemma~\ref{basicdiffmoduliadd}.\\

Hence, we may assume that the expression $E$ has either two or three variables. We treat these cases separately. From now on, we use the notation $x,y,z$ for the variables and $\alpha,\beta,\gamma$ for maps.

\subsection{$E$ has two variables $x$ and $y$}

Observe that if there is at most one mixed quadratic term $(\alpha(x) + c_1 x)( \beta(y) + c_2 y)$ in the quadratic part, then once again Proposition~\ref{acyclicIdent} applies. Hence, we may assume that there are at least two such terms in $E$. Suppose now that there all three quadratic terms are of this form, hence the quadratic part is 
$$(\alpha(x) + c_1 x)( \beta(y) + c_2 y) + (\alpha(x) + c_3 x)( \beta(y) + c_4 y) + (\alpha(x) + c_5 x)( \beta(y) + c_6 y),$$
where $c_1, c_2, \dots, c_6\in\{0,1\}$. This constraint on coefficients is crucial. By pigeonhole principle, there are at least two equal coefficients among $c_1, c_3, c_5$, w.l.o.g. $c_1 = c_3$. The quadratic part of $E$ may be written as
$$(\alpha(x) + c_1 x)(2\beta(y) + (c_2+c_4)y) + (\alpha(x) + c_5 x)( \beta(y) + c_6 y),$$
which we treat using Lemma~\ref{basicdiffmoduliadd} if this factorizes further, or using Lemma~\ref{affineMapsLemma} otherwise.\\

It remains to treat the case when there are exactly two mixed terms, so the quadratic part is w.l.o.g.
$$(\alpha(x) + c_1 x)(\alpha(x) + c_2 x) + (\alpha(x) + c_3 x)( \beta(y) + c_4 y) + (\alpha(x) + c_5 x)(\beta(y) + c_6 y).$$

However, we can no longer use the affine maps to cancel out quadratic terms to modify the expression and then apply the Proposition~\ref{acyclicIdent}. Instead, we have to use a different argument, which unfortunately gives significantly worse bounds.

\begin{lemma}\label{2varsProb} Let $E$ be a quadratic expression with quadratic part of the form
$$n_1 \alpha(x)^2 + \alpha(x) (n_2 x + n_3 \beta(y) + n_4 y) + x (n_5 x + n_6 \beta(y) + n_7 y),$$
with $n_1, n_2, \dots, n_7 \in \mathbb{Z}$ and $n_1, n_3 \not= 0$. Then, for every sufficiently large prime $p$, we can find $\alpha, \beta\colon\mathbb{Z}_p \to \mathbb{Z}_p$ such that the expression does not attain every value in $\mathbb{Z}_p$.
\end{lemma}

Immediately, we have the following corollary.

\begin{corollary} \label{2varsProbFull}Let $E$ be a quadratic expression with quadratic part of the form
$$n_1 \alpha(x)^2 + \alpha(x) (n_2 x + n_3 \beta(y) + n_4 y) + x (n_5 x + n_6 \beta(y) + n_7 y),$$
with $n_1, n_2, \dots, n_7 \in \mathbb{Z}$ and $n_1, n_3 \not= 0$. Let $\epsilon > 0$. Then, there is $q$, product of distinct, arbitrarily large primes, and maps $\alpha, \beta\colon\mathbb{Z}_q \to \mathbb{Z}_q$ such that the expression attains at most $\epsilon q$ values.\end{corollary}

\begin{proof} Let $N$ be the bound in Lemma~\ref{2varsProb} such that for all primes $p > N$ we have $\alpha^{(p)}, \beta^{(p)}\colon \mathbb{Z}_p \to \mathbb{Z}_p$ such that the expression evades one value, i.e. all values are confined to a set $S_p$ of size $p-1$. If we now take $q = p_1 p_2 \dots p_n$, a product of distinct primes greater than $N$, then, once again identifying $\mathbb{Z}_q \cong \mathbb{Z}_{p_1} \oplus \dots \oplus \mathbb{Z}_{p_n}$, and defining $\alpha,\beta\colon\mathbb{Z}_q \to \mathbb{Z}_q$ coordinatewise using $\alpha^{(p_i)}, \beta^{(p_i)}$, we have that the expression in $\mathbb{Z}_q$ attains values in $S_{p_1} \times S_{p_2} \times \dots \times S_{p_n}$. Hence, it takes at most $(p_1 - 1) \dots (p_n - 1)$ values. A standard calculation reveals that for $n$ sufficiently large, the number of values becomes $o(q)$. (The $p$ that appears in the sums and products below ranges over primes only.) Indeed,
\begin{equation*}\begin{split}
\prod_{N < p < M} \frac{p-1}{p} &= \operatorname{exp}\left( \sum_{N < p < M} \log\left(1 - \frac{1}{p}\right) \right) = \operatorname{exp}\left( \sum_{N < p < M}  - \frac{1}{p} + O\left(\frac{1}{p^2}\right) \right)\\
&= O\left( \operatorname{exp}\left(  - \sum_{N < p < M} \frac{1}{p}\right) \right) \to 0
\end{split}\end{equation*}
as $M \to \infty$, since $\sum_p \frac{1}{p} = \infty$.\end{proof}

\begin{proof}[Proof of Lemma~\ref{2varsProb}.] Let $\lambda_1 \alpha(x) + \mu_1 x + \lambda_2 \beta(y) + \mu_2 y$ be the linear part of the expression. We will define $\alpha\colon\mathbb{Z}_p \to \mathbb{Z}_p$ essentially by setting each $\alpha(y)$ uniformly independently at random (for technical reasons, for every $x$ we will forbid one value in $\mathbb{Z}_p$). Our aim is to define $\beta$ accordingly so that the expression evades zero value. Hence, for every $y$, we want to find $\beta(y)$ such that there is no $x$ with
\begin{equation}\label{expression}\beta(y) (n_3 \alpha(x) + n_6x + \lambda_2) + \alpha(x) (n_1 \alpha(x) + n_2 x + n_4 y + \lambda_1) + n_5 x^2  + n_7 xy + \mu_1 x + \mu_2 y = 0.\end{equation}
In other words, provided $n_3 \alpha(x) + n_6 x + \lambda_2 \not= 0$ always, we want a value of $\beta(y)$ such that 
\begin{equation}\label{betaEquation}\beta(y) \not= -\frac{1}{n_3 \alpha(x) + n_6 x + \lambda_2} \left(y(n_4\alpha(x) + n_7x + \mu_2) + \alpha(x) (n_1 \alpha(x) + n_2 x + \lambda_1) + n_5 x^2 + \mu_1 x\right),\end{equation}
for all $x \in \mathbb{Z}_p$. Hence, this becomes the requirement that for every fixed $y$, the set 
$$S_y \colon= \left\{ -\frac{1}{n_3 \alpha(x) + n_6 x + \lambda_2} \left(y(n_4\alpha(x) + n_7x + \mu_2) + \alpha(x) (n_1 \alpha(x) + n_2 x + \lambda_1) + n_5 x^2 + \mu_1 x\right) \colon x \in \mathbb{Z}_p\right\}$$
is not the whole set $\mathbb{Z}_p$. We now define $\alpha\colon\mathbb{Z}_p\to\mathbb{Z}_p$ by setting each $\alpha(x)$ independently to be a uniform random variable on $\mathbb{Z}_p \setminus \{-\frac{n_6 x + \lambda_2}{n_3}\}$ (which is fine, as $n_3 \not= 0$).\\

Let $B_y$ be the event that the set $S_y$ is the whole $\mathbb{Z}_p$, i.e. for every $v$ there is $x$ such that
\begin{equation}\label{vEqn} 0 = v (n_3 \alpha(x) + n_6x + \lambda_2) + \left(y(n_4\alpha(x) + n_7x + \mu_2) + \alpha(x) (n_1 \alpha(x) + n_2 x + \lambda_1) + n_5 x^2  + \mu_1 x\right).\end{equation}
Suppose that $B_y$ occurs. We cannot use the same $x$ for two values of $v$, so by counting, for every $v$, we have exactly one $x = x(v)$ such that (\ref{vEqn}) holds. Suppose that we already know this permutation $x(v) = \pi(v)$. The equation is further equivalent to
$$n_1\alpha(\pi(v))^2 + \alpha(\pi(v)) (n_2 \pi(v) + n_4 y + n_3 v + \lambda_1) + n_5 \pi(v)^2 + n_6 \pi(v) v + n_7 \pi(v) y +  \mu_1 \pi(v) + y\mu_2 + v  \lambda_2 = 0.$$
Hence, for every $v$, we know that $\alpha(\pi(v))$ must take one of the two values depending only on $v$, since $n_1 \not=0$. So, given $\pi$, there are at most $2^p$ choices for $\alpha$. Hence, the probability of $B_y$ is $\mathbb{P}(B_y) \leq p! 2^p / (p-1)^p$. By Stirling's formula,  
$$\mathbb{P}(B_y) = O\left(\sqrt{p}\left(\frac{2}{e}\right)^p\right).$$
By the union bound, the probability $\mathbb{P}(\cup_y B_y) = o(1)$, so there is a choice of $\alpha$ such that for all $y$ we have $S_y \not= \mathbb{Z}_p$. For such $\alpha$, we can define $\beta$ so that the expression does not attain every value, proving the lemma.
\end{proof}

Returning to our main argument, the case when the quadratic part is of the form
$$(\alpha(x) + c_1 x)(\alpha(x) + c_2 x) + (\alpha(x) + c_3 x)( \beta(y) + c_4 y) + (\alpha(x) + c_5 x)(\beta(y) + c_6 y).$$
follows directly from Corollary~\ref{2varsProbFull}, since $n_1 = 1, n_3 = 2$.

\subsection{$E$ has three variables}

Finally, we address the case when the quadratic part of $E$ has exactly three variables. Once again, we only need to consider the situation when $G_E$ has a cycle. We know that $G_E$ is a graph on three vertices, with $v_{is} + e \leq 3$. The only there such graphs that have cycles are $xy, xy$ (a repeated edge and an isolated vertex), $xy, xy, xz$ (a repeated edge and an additional edge) and $xy, yz, zx$ (a cycle of length 3).\\

\noindent\textbf{$G_E$ is a repeated edge.}In this case, the quadratic part of the expression is w.l.o.g.
$$(\alpha(x) + c_1 x)(\beta(y) + c_2y) + (\alpha(x) + c_3 x)(\beta(y) + c_4y) + (\gamma(z) + c_5z)(\gamma(z) + c_6z).$$
If $c_1 = c_3$ or $c_2 = c_4$, we can further factorize the expression and apply the Proposition~\ref{acyclicIdent}, to finish the proof. Thus assume that $c_1 \not= c_3$ and $c_2 \not=c_4$.\\
Let the linear part of the expression be $\lambda_1\alpha(x) + \mu_1x+\lambda_2\beta(y) + \mu_2y + \lambda_3\gamma(z) + \mu_3z$. Fix a prime $p$, and apply Lemma~\ref{affineMapsLemma} to the expression 
$$(\alpha(x) + c_1 x)(\beta(y) + c_2y) + (\alpha(x) + c_3 x)(\beta(y) + c_4y) + \lambda_1\alpha(x) + \mu_1x+\lambda_2\beta(y) + \mu_2y$$
to make it constant. Hence, it remains to pick $\gamma\colon\mathbb{Z}_p \to \mathbb{Z}_p$ so that the expression
$$(\gamma(z) + c_5z)(\gamma(z) + c_6z) + \lambda_3\gamma(z) + \mu_3z$$
attains a small number of values, which we can ensure if we apply Lemma~\ref{smallPolyVal} for each $z$ to the polynomial $\gamma(z)^2 + (c_5z +c_6z + \lambda_3) \gamma(z) + c_5c_6z^2 + \mu_3 z$. Provided $p$ is large enough, $\gamma(z)$ can be chosen so that the value of the polynomial is small. This completes the proof in this case.\\

\noindent\textbf{$G_E$ is a 3-cycle.} In this case, the quadratic part of $E$ has three mixed terms, one for each pair of variables among $x,y,z$. More precisely, it is
$$(\alpha(x) + c_1 x)(\beta(y) + c_2 y) + (\beta(y) + c_3y)(\gamma(z) + c_4z) + (\gamma(z) + c_5z)(\alpha(x) + c_6x),$$
where $c_1, \dots, c_6 \in \{0,1\}$. Let the linear part be 
$$\lambda_1 \alpha(x) + \mu_1 x + \lambda_2 \beta(y) + \mu_2y + \lambda_3 \gamma(z) + \mu_3z.$$

First, assume that no further factorization is possible, i.e. $c_1 \not= c_6, c_2 \not= c_3$ and $c_4 \not= c_5$. 
We set $\alpha(x) = -c_1 x + d_1, \beta(y) = -c_3 y + d_2, \gamma(z) = -c_5z + d_3$, so that the expression becomes
\begin{equation*}\begin{split}
d_1 ((c_2 - c_3)y + d_2) &+ d_2 ((c_4 - c_5) z + d_3) + d_3 ((c_6 - c_1) x + d_1) + (\mu_1 - c_1 \lambda_1) x + (\mu_2 - c_3 \lambda_2) y + (\mu_3 - c_5 \lambda_3) z\\
&+ (\lambda_1 d_1 + \lambda_2 d_2 + \lambda_3 d_3).\end{split}\end{equation*}
Rearranging further,
\begin{equation*}\begin{split}
& x (d_3 (c_6 - c_1) + \mu_1 - c_1 \lambda_1) + y (d_1 (c_2 - c_3) + \mu_2 - c_3 \lambda_2) + z(d_2(c_4 - c_5) + \mu_3 - c_5 \lambda_3)\\
+ & (\lambda_1 d_1 + \lambda_2 d_2 + \lambda_3 d_3 + d_1d_2 + d_2d_3 + d_3d_1).
\end{split}\end{equation*}
Setting $d_1 = \frac{\mu_2 - c_3 \lambda_2}{c_3 - c_2}$, $d_2 = \frac{\mu_3 - c_5 \lambda_3}{c_5-c_4}$ and $d_3 = \frac{\mu_1 - c_1 \lambda_1}{c_1 - c_6}$, the expression becomes constant.\\

Now, suppose that w.l.o.g. $c_1 = c_6$. Assume for now that $(c_3 - c_2)(c_4 - c_5) = 0$, we will address the case when this product does not vanish later. The expression becomes 
$$(\alpha(x) + c_1 x)(\beta(y) + c_2 y + \gamma(z) + c_5z) + (\beta(y) + c_3y)(\gamma(z) + c_4z) + \lambda_1 \alpha(x) + \mu_1 x + \lambda_2 \beta(y) + \mu_2y + \lambda_3 \gamma(z) + \mu_3z.$$
We use the identification of coordinates approach. We will take $q = p_1 p_2 p_3$, where $p_1 < p_2 < p_3 < 2p_1$ are arbitrarily large primes. Identify $\mathbb{Z}_q \cong \mathbb{Z}_{p_1} \oplus \mathbb{Z}_{p_2} \oplus \mathbb{Z}_{p_3}$. Our first step is to set
$$\alpha_1(x) = -c_1 x_1, \beta_1(y) = -c_3 y_1 + 1 - \lambda_3, \alpha_2(x) = -c_1x_2, \gamma_2(z) = -c_4 z_2 + 1 - \lambda_2.$$
This way, the quadratic terms vanish in the first two coordinates, and we still have freedom of choosing $\beta_2, \gamma_1$ to cancel the linear terms in $y, z$. We want to do the same for $\alpha_3$, so we set $\beta_3(y) = -c_2 y_3 + 1 - \lambda_1, \gamma_3(z) = -c_5 z_3$. However, with such a choice, the third coordinate of the expression is
\begin{equation*}\begin{split}
&(1-\lambda_1)(\alpha_3(x) + c_1x_3) + ((c_3 - c_2) y_3 + 1 - \lambda_1)((c_4 - c_5)z_3) + \lambda_1 \alpha_3(x) + \mu_1 x_3 + y_3(\mu_2-\lambda_2c_2) + z_3(\mu_3 - \lambda_3 c_5)\\
+ &\lambda_2(1-\lambda_1)\\
=&\alpha_3(x) + ((1-\lambda_1) c_1 + \mu_1) x_3 + (c_3 - c_2)(c_4 - c_5)y_3z_3 + (\mu_2-\lambda_2c_2) y_3 + (\mu_3 - \lambda_3 c_5+ (1 - \lambda_1)(c_4 - c_5)) z_3 + \lambda_2(1-\lambda_1). 
\end{split}\end{equation*} 

Since $(c_3 - c_2)(c_4 - c_5) = 0$, the expression becomes
\begin{equation*} \begin{split}
( & (\mu_1 - c_1\lambda_1)x_1 + (\mu_2 - c_3\lambda_2) y_1 + \gamma_1(z) + (\mu_3 + (1-\lambda_3)c_4)z_1 + \lambda_2(1-\lambda_3)\\
& (\mu_1 - c_1\lambda_1)x_2 + \beta_2(y) + (\mu_2 + c_3(1-\lambda_2))y_2 + (\mu_3 - c_4\lambda_3)z_2 + \lambda_3(1-\lambda_2)\\
& \alpha_3(x) + ((1-\lambda_1) c_1 + \mu_1) x_3 + (\mu_2-\lambda_2c_2) y_3 + (\mu_3 - \lambda_3 c_5+ 1 - \lambda_1) z_3 + \lambda_2(1-\lambda_1)).
\end{split}\end{equation*}

We may now apply the identification of coordfinates idea, using Proposition~\ref{strongIdent}, to finish the proof in this case.\\

Now assume that $(c_3 - c_2)(c_4 - c_5) \not= 0$. We shall take $q = p_1p_2p_3p_4p_5$ and use the additional fourth and fifth coordinates to cancel out the $y_3z_3$ term. Also, using the prime number theorem, we can find arbitrarily large primes such that $p_1 < \dots < p_5 < p_1 + O(\log p_i)$. In the work below it will be essential that all the primes are close in value (although it will not be important to have them this close). Writing $E$ also for the resulting map defined by $\alpha, \beta, \gamma$ and the expression, our aim is to show that 
$$\sum_{i=1}^5\operatorname{mod}_{p_i,p_3} (E_i)$$
takes few values in $\mathbb{Z}_{p_3}$.\\
We use the same choices of $\alpha_1, \alpha_2, \beta_1, \beta_3, \gamma_2, \gamma_3$ as in the case when $(c_3 - c_2)(c_4 - c_5) = 0$. Next, we set $\alpha_4(x) = -c_1 x_4, \beta_4(y) = -\operatorname{mod}_{p_3, p_4} (y_3) - c_3 y_4, \gamma_4(y) = \operatorname{mod}_{p_3, p_4} (z_3) - c_4z_4$. Observe that \begin{equation*}\begin{split}
&\operatorname{mod}_{p_4,p_3} \left((\beta_4(y) + c_3 y_4)(\gamma_4(y) + c_4z_4)\right) + \operatorname{mod}_{p_3,p_3}(y_3 z_3) = \\
&\operatorname{mod}_{p_4,p_3} (-\operatorname{mod}_{p_3, p_4} (y_3)\operatorname{mod}_{p_3, p_4} (z_3)) + y_3z_3 =\\
&\pi_{p_3} \circ \iota_{p_4} ( - \pi_{p_4} \circ \iota_{p_3} (y_3)\pi_{p_4} \circ \iota_{p_3} (z_3)) + y_3z_3\end{split}\end{equation*}

Let $\overline{y_3} = \iota_{p_3} (y_3)$ and $\overline{z_3} = \iota_{p_3}(z_3)$. Hence $\overline{y_3},\overline{z_3} \in \{0,1, \dots, p_3 - 1\}$ are integers such that $\pi_{p_3}(\overline{y_3}) = y_3$ and $\pi_{p_3}(\overline{z_3}) = z_3$ hold. We also have $\iota_{p_4} ( - \pi_{p_4} \circ \iota_{p_3} (y_3)\pi_{p_4} \circ \iota_{p_3} (z_3)) = \iota_{p_4} ( - \pi_{p_4} (\overline{y_3}) \pi_{p_4} (\overline{z_3})) = \iota_{p_4} ( \pi_{p_4} ( -\overline{y_3} \hspace{1pt} \overline{z_3})) $. But the $\iota_{p_4} ( \pi_{p_4} ( -\overline{y_3}\hspace{1pt}\overline{z_3}))$ is an integer $w \in \{0,1, \dots, p_4-1\}$ such that $\pi_{p_4}(w) = \pi_{p_4} ( -\overline{y_3}\hspace{1pt}\overline{z_3})$, thus $w = -\overline{y_3}\hspace{1pt}\overline{z_3} + p_4 t$, for $t = \lceil \frac{\overline{y_3}\hspace{1pt}\overline{z_3}}{p_4} \rceil$. Therefore, with this choice of $t$ we have
\begin{equation*}\begin{split}
&\operatorname{mod}_{p_4,p_3} ((\beta_4(y) + c_3 y_4)(\gamma_4(y) + c_4z_4)) + \operatorname{mod}_{p_3,p_3}(y_3 z_3) = \\
&\pi_{p_3} \circ \iota_{p_4} ( - \pi_{p_4} \circ \iota_{p_3} (y_3)\pi_{p_4} \circ \iota_{p_3} (z_3)) + y_3z_3 = \\
&\pi_{p_3} (-\overline{y_3}\hspace{1pt}\overline{z_3} + p_4 t) + \pi_{p_3} (\overline{y_3})\pi_{p_3} (\overline{z_3})=\\
&\pi_{p_3} (p_4 t) = \pi_{p_3}((p_4 - p_3) t)
\end{split}\end{equation*}

Proceeding further, we use the fifth coordinate to approximate $(p_4 - p_3) t$. To this end, write $M = \lfloor\sqrt{p_4}\rfloor$, $\overline{y_3} = u M + u', \overline{z_3} = vM + v'$, where $u', v' \in \{0, 1, \dots, M-1\}$, $u,v = O(M)$. Observe that $uv$ is a good approximation to $t$
\begin{equation*}\begin{split}
&\left| t - uv\right| = \left| \lceil \frac{\overline{y_3} \hspace{1pt}\overline{z_3}}{p_4} \rceil - uv\right| \leq 1 + \left| \frac{\overline{y_3} \hspace{1pt}\overline{z_3} - p_4uv}{p_4} \right| = 1 + \left| \frac{(uM + u')(vM + v') - p_4uv}{p_4} \right| \\
\leq & 1 + \left| \frac{u'vM + uv' M + u'v'}{p_4} \right| + \left| \frac{uv(p_4- M^2)}{p_4}\right| \leq C_1\sqrt{p_4}
\end{split}\end{equation*}
for some absolute constant $C_1$, since $u,v,u',v', M, |p_4 - M^2| = O(\sqrt{p_4})$. Therefore, we set $\alpha_5 = -c_1 x_5, \beta_5(y) = -\pi_{p_5}(u) - c_3 y_5, \gamma_5(z) = \pi_{p_5}(v (p_4 - p_3)) - c_4z_5$. Note that $\beta_5, \gamma_5$ are well defined, as $u$ depends on $y$ only, and $v$ depends on $z$ only. With $\beta_5$ and $\gamma_5$ so defined we have
\begin{equation*}\begin{split}
&\operatorname{mod}_{p_5,p_3}((\beta_5(y) + c_3y_5)(\gamma_5(z) + c_4z_5)) + \pi_{p_3}(t(p_4-p_3)) =\\
&\pi_{p_3}(\iota_{p_5}(-\pi_{p_5}(u)\pi_{p_5}(v (p_4 - p_3))) + t(p_4 - p_3)) =\\
&\pi_{p_3}(\iota_{p_5}(\pi_{p_5}(-uv(p_4-p_3))) + t(p_4 - p_3)) 
\end{split}\end{equation*}
We also have that $\iota_{p_5}(\pi_{p_5}(-uv(p_4-p_3)))$ is an integer $s \in \{0, 1, \dots, p_5-1\}$ such that $\pi_{p_5}(s) = \pi_{p_5}(-uv(p_4-p_3))$, thus $s = -uv(p_4-p_3) + p_5 t'$, where $t'  = \lceil \frac{uv(p_4-p_3)}{p_5} \rceil \leq C_2\log p_3$, for an absolute constant $C_2$. Therefore, 
\begin{equation*}\begin{split}
&\operatorname{mod}_{p_5,p_3}((\beta_5(y) + c_3y_5)(\gamma_5(z) + c_4z_5)) + \pi_{p_3}(t(p_4-p_3)) =\\
&\pi_{p_3}(\iota_{p_5}(\pi_{p_5}(-uv(p_4-p_3))) + t(p_4 - p_3)) =\\
&\pi_{p_3}(-uv(p_4-p_3) + p_5 t' + t(p_4-p_3)) =\\
&\pi_{p_3}((t-uv)(p_4-p_3) + p_5 t')
\end{split}\end{equation*}

Summing up the work done so far we conclude that
\begin{equation*}\begin{split}
\operatorname{mod}_{p_3,p_3}(&y_3 z_3) + \operatorname{mod}_{p_4,p_3} (\beta_4(y) + c_3 y_4)(\gamma_4(y) + c_4z_4)) + \operatorname{mod}_{p_5,p_3}((\beta_5(y) + c_3y_5)(\gamma_5(z) + c_4z_5))\\
=& y_3 z_3 + \operatorname{mod}_{p_4,p_3}(-\operatorname{mod}_{p_3,p_4}(y_3)\operatorname{mod}_{p_3,p_4}(z_3)) + \operatorname{mod}_{p_5,p_3}(\pi_{p_5}(-uv(p_4 - p_3))) \in S_1,
\end{split}\end{equation*}
where $S_1 \subset\mathbb{Z}_{p_3}$ is the set defined by $\{\pi_{p_3}(a(p_4 - p_3) + p_5 b)\colon a,b \in \mathbb{Z}, |a| \leq C_1 \sqrt{p_4}, |b| \leq C_2 \log p_3\}$. In particular $|S_1| = O(\sqrt{p_3} \log^2p_3)$. Finally, we put everything together, using the Lemma~\ref{modProps}. Recall the definitions (the maps $\beta_4, \gamma_4$ and $\gamma_5$ below are slightly modified to cancel the term $(c_3-c_2)(c_4-c_5)y_3z_3$ instead of just $y_3 z_3$) 
\begin{equation*}\begin{split}
\alpha_1(x) &= -c_1 x_1, \beta_1(y) = -c_3 y_1 + 1 - \lambda_3,\\
\alpha_2(x) &= -c_1x_2, \gamma_2(z) = -c_4 z_2 + 1 - \lambda_2,\\
\beta_3(y) &= -c_2 y_3 + 1 - \lambda_1, \gamma_3(z) = -c_5 z_3,\\
\alpha_4(x) &= -c_1 x_4, \beta_4(y) = -(c_3-c_2)\operatorname{mod}_{p_3, p_4} (y_3) - c_3 y_4, \gamma_4(y) = (c_4-c_5)\operatorname{mod}_{p_3, p_4} (z_3) - c_4z_4,\\
\alpha_5 &= -c_1 x_5, \beta_5(y) = -\pi_{p_5}(u) - c_3 y_5, \gamma_5(z) = \pi_{p_5}(v (p_4 - p_3)(c_3-c_2)(c_4-c_5)) - c_4z_5.
\end{split}\end{equation*} 
Thus,
\begin{equation*}\begin{split}
\sum_{i=1}^5& \operatorname{mod}_{p_i, p_3} (E_i) = \sum_{i=1}^5  \operatorname{mod}_{p_i, p_3}((\alpha_i(x) + c_1 x_i)(\beta_i(y) + c_2 y_i + \gamma_i(z) + c_5z_i) + (\beta_i(y) + c_3y_i)(\gamma_i(z) + c_4z_i)\\
&+\lambda_1 \alpha_i(x) + \mu_1 x_i + \lambda_2 \beta_i(y) + \mu_2y_i + \lambda_3 \gamma_i(z) + \mu_3z_i)\\
=& \operatorname{mod}_{p_1, p_3}(\gamma_1(z) + (\mu_1 - c_1\lambda_1)x_1 + (\mu_2 - c_3 \lambda_2)y_1 + (\mu_3 + c_4(1-\lambda_3))z_1 + \lambda_2(1-\lambda_3))\\
&+\operatorname{mod}_{p_2, p_3}(\beta_2(y) + (\mu_1 - c_1\lambda_1) x_2 + (\mu_2 + c_3(1-\lambda_2))y_2 + (\mu_3 - c_4\lambda_3)z_2 + \lambda_3(1-\lambda_2))\\
&+\alpha_3(x)+ y_3z_3(c_3-c_2)(c_4-c_5) + x_3(c_1(1-\lambda_1) + \mu_1)  + (\mu_2 - \lambda_2 c_2)y_3 + ((1-\lambda_1)(c_4-c_5) - c_5\lambda_3 + \mu_3)z_3\\
&+\lambda_2(1-\lambda_1)\\
&+\operatorname{mod}_{p_4,p_3}( -(c_3-c_2)(c_4-c_5)\operatorname{mod}_{p_3,p_4}(y_3)\operatorname{mod}_{p_3,p_4}(z_3) + (\mu_1 - \lambda_1c_1)x_4 - (c_3-c_2)\lambda_2 \operatorname{mod}_{p_3,p_4}(y_3)\\
&+ (\mu_2 - c_3 \lambda_2)y_4 + (c_4-c_5)\lambda_3\operatorname{mod}_{p_3,p_4}(z_3) + (\mu_3 - \lambda_3 c_4)z_4)\\
&+\operatorname{mod}_{p_5,p_3}(-\pi_{p_5}(u) \pi_{p_5}(v(p_4-p_3)(c_3-c_2)(c_4-c_5)) + (\mu_1 - \lambda_1c_1) x_5 - \lambda_2 \pi_{p_5}(u) + (\mu_2 - \lambda_2 c_3)y_5 \\
&+\lambda_3 \pi_{p_5}(v(p_4-p_3)(c_3-c_2)(c_4-c_5)) + (\mu_5 - \lambda_3 c_4)z_5)\\
\overset{O(1)}{=}&\pi_{p_3}(\iota_{p_3}(\alpha_3(x)) + (\mu_1 - c_1\lambda_1)\iota_{p_1}(x_1) + (\mu_1 - c_1\lambda_1) \iota_{p_2}(x_2) + (c_1(1-\lambda_1) + \mu_1)\iota_{p_3}(x_3) + (\mu_1 - \lambda_1c_1)\iota_{p_4}(x_4)\\
&+ (\mu_1 - \lambda_1c_1) \iota_{p_5}(x_5)\\
&+ \iota_{p_2}(\beta_2(y)) + (\mu_2 - c_3 \lambda_2)\iota_{p_1}(y_1) + (\mu_2 + c_3(1-\lambda_2))\iota_{p_2}(y_2) + (\mu_2 - \lambda_2 c_2)\iota_{p_3}(y_3) - (c_3-c_2)\lambda_2 \iota_{p_4}(\operatorname{mod}_{p_3,p_4}(y_3))\\
&+ (\mu_2 - c_3 \lambda_2)\iota_{p_4}(y_4) - \lambda_2 \iota_{p_5}(\pi_{p_5}(u)) + (\mu_2 - \lambda_2 c_3)\iota_{p_5}(y_5)\\
&+ \iota_{p_1}(\gamma_1(z)) + (\mu_3 + c_4(1-\lambda_3))\iota_{p_1}(z_1) + (\mu_3 - c_4\lambda_3)\iota_{p_2}(z_2) + ((1-\lambda_1)(c_4-c_5) - c_5\lambda_3 + \mu_3) \iota_{p_3}(z_3)\\
 &+ (c_4-c_5)\lambda_3\iota_{p_4}(\operatorname{mod}_{p_3,p_4}(z_3))+ (\mu_3 - \lambda_3 c_4)\iota_{p_4}(z_4) + \lambda_3 \iota_{p_5}(\pi_{p_5}(v(p_4-p_3)(c_3-c_2)(c_4-c_5))) + (\mu_5 - \lambda_3 c_4)\iota_{p_5}(z_5))\\
&+(c_3-c_2)(c_4-c_5) (y_3 z_3 - \operatorname{mod}_{p_4,p_3}(\operatorname{mod}_{p_3,p_4}(y_3)\operatorname{mod}_{p_3,p_4}(z_3)) - \operatorname{mod}_{p_5,p_3}(\pi_{p_5}(uv(p_4 - p_3)))).
\end{split}\end{equation*}
Finally, we set $\alpha_3, \beta_2, \gamma_1$ to cancel the linear $x,y,z$ terms respectively:
\begin{equation*}\begin{split}
\alpha_3(x) &= -\pi_{p_3}((\mu_1 - c_1\lambda_1)\iota_{p_1}(x_1) + (\mu_1 - c_1\lambda_1) \iota_{p_2}(x_2) + (c_1(1-\lambda_1) + \mu_1)\iota_{p_3}(x_3) + (\mu_1 - \lambda_1c_1)\iota_{p_4}(x_4)\\
&+ (\mu_1 - \lambda_1c_1) \iota_{p_5}(x_5))\\
\beta_2(y) &= - \pi_{p_2}((\mu_2 - c_3 \lambda_2)\iota_{p_1}(y_1) + (\mu_2 + c_3(1-\lambda_2))\iota_{p_2}(y_2) + (\mu_2 - \lambda_2 c_2)\iota_{p_3}(y_3) - (c_3-c_2)\lambda_2 \iota_{p_4}(\operatorname{mod}_{p_3,p_4}(y_3)) \\
&+ (\mu_2 - c_3 \lambda_2)\iota_{p_4}(y_4) - \lambda_2 \iota_{p_5}(\pi_{p_5}(u)) + (\mu_2 - \lambda_2 c_3)\iota_{p_5}(y_5)))\\
\gamma_1(z) &= -\pi_{p_1}((\mu_3 + c_4(1-\lambda_3))\iota_{p_1}(z_1) + (\mu_3 - c_4\lambda_3)\iota_{p_2}(z_2) + ((1-\lambda_1)(c_4-c_5) - c_5\lambda_3 + \mu_3) \iota_{p_3}(z_3)\\
&+ (c_4-c_5)\lambda_3\iota_{p_4}(\operatorname{mod}_{p_3,p_4}(z_3))+ (\mu_3 - \lambda_3 c_4)\iota_{p_4}(z_4) + \lambda_3 \iota_{p_5}(\pi_{p_5}(v(p_4-p_3)(c_3-c_2)(c_4-c_5)))\\
&+ (\mu_5 - \lambda_3 c_4)\iota_{p_5}(z_5)))
\end{split}\end{equation*}

With this choice of $\alpha,\beta,\gamma$ we have
$$\sum_{i = 1}^5 \operatorname{mod}_{p_i, p_3}(E_i) \overset{O(1)}{=} (c_3-c_2)(c_4-c_5) (y_3 z_3 - \operatorname{mod}_{p_4,p_3}(\operatorname{mod}_{p_3,p_4}(y_3)\operatorname{mod}_{p_3,p_4}(z_3)) - \operatorname{mod}_{p_5,p_3}(\pi_{p_5}(uv(p_4 - p_3))))$$
which takes small number of values.\\

\noindent\textbf{$G_E$ is has a repeated edge and another single edge.} In this case, the quadratic part of the expression is w.l.o.g.
$$(\alpha(x) + c_1 x)(\beta(y) + c_2y) + (\alpha(x) + c_3 x)(\beta(y) + c_4y) + (\alpha(x) + c_5x)(\gamma(z) + c_6z).$$
If $c_1 = c_3$ or $c_2 = c_4$, we can further factorize the expression and apply the Proposition~\ref{acyclicIdent}, to finish the proof. Thus assume that $c_1 \not= c_3$ and $c_2 \not=c_4$. Since all $c_i \in \{0,1\}$, we must have $c_5 \in \{c_1, c_3\}$, so w.l.o.g. $c_5 = c_1$.\\

We now discuss a limitation of the usual approach based on the identification of coordinates idea. Basically, we always try to cancel out the quadratic terms by taking some of the $\alpha_i, \beta_i, \gamma_i$ to be affine, while we use the rest to cancel out the linear terms in $x_i, y_i, z_i$. Let us try the same strategy here. Temporarily we work in $\mathbb{Z}_p \oplus \mathbb{Z}_p \oplus \dots \oplus \mathbb{Z}_p$ to ignore the difficulties that arise from moving from one modulus to another one. For technical reasons, we use a slightly unusual indexing of $n+2$ coordinates by $-1,0, \dots, n$. Start by using the coordinate -1 to get a free $\gamma_{-1}$ which is later used to cancel the linear terms involving $z$. Thus, we set $\alpha_{-1}(x) = -c_1 x_{-1}$ and $\beta_{-1}(y) = -c_4y_{-1}$. Similarly, try to use the coordinate 0 to get a free $\beta_0$ map. Rewriting the expression as
$$\beta(y) (2\alpha(x) + (c_1 + c_3)x) + y((c_2 + c_4)\alpha(x) + (c_1c_2 + c_3c_4)x) + (\alpha(x) + c_5x)(\gamma(z) + c_6z),$$
we see that we need to set $\alpha_0(x) = -\frac{c_1 + c_3}{2} x_0 + C$, for a constant $C$ and $\gamma_0(z) = -c_6z_0$. The issue is that we get a term $x_0y_0$ with a non-zero coefficient. The natural thing to do now is to try to cancel somehow this term. During this digression, we forget about the linear terms (in any case, we can cancel them by remaining free $\alpha_i, \beta_i, \gamma_i$).\\

The most natural thing is to set $\gamma_i(z) = -c_6 z_i$ for $i=1,2,\dots, n$ (as further mixed quadratic terms involving $z$ seem even harder to cancel). Hence, the question is whether we can find linear maps $\alpha_1, \dots, \alpha_n, \beta_1, \dots, \beta_n$, each a linear combination of $x_0, x_1, \dots, x_n$ or $y_0, y_1, \dots, y_n$ such that (w.l.o.g. $c_1 = c_2 = 0$ and $c_3 = c_4 = 1$)
\begin{equation}\label{impos1}\sum_{i=1}^n \alpha_i(x) \beta_i(x) + (\alpha_i(x) + x_i)(\beta_i(y) + y_i) = 0.\end{equation} 
Write $\alpha_i(x) = \sum_{j=0}^n A_{ij} x_j$ and $\beta_i(y) = \sum_{j = 0}^n B_{ij} y_j$. Let $\delta_{ij}$ equal 1 if $i = j$ and zero otherwise. Expanding the (\ref{impos1}) we obtain
\begin{equation}\begin{split}\label{impos2}
\sum_{i=1}^n \left( \left(\sum_{j=0}^n A_{ij} x_j\right)\left(\sum_{k=0}^n B_{ik}y_k\right) + \left(\sum_{j=0}^n (A_{ij} + \delta_{ij}) x_j\right)\left(\sum_{k=0}^n (B_{ik} + \delta_{ik})y_k\right)\right)=\\
\sum_{j=0}^n\sum_{k=0}^n \left( \sum_{i=1}^n 2A_{ij}B_{ik} + A_{ij} \delta_{ik} + \delta_{ij} B_{ik} + \delta_{ij} \delta_{ik} \right) x_jy_k.
 \end{split}\end{equation}  
Hence, we require that for every $j,k \in \{0, 1, \dots, n\}$, which are not both zero, we have $\sum_{i=1}^n 2A_{ij}B_{ik} + A_{ij} \delta_{ik} + \delta_{ij} B_{ik} + \delta_{ij} \delta_{ik} = 0$, while for $j=k=0$ this expression is non-zero (to cancel the initial $x_0y_0$ term). We now define two $(n+1) \times (n+1)$ matrices $P,Q$, with entries indexed by $\{0,1\dots, n\} \times \{0,1, \dots, n\}$, by setting $P_{ji} = A_{ij}$ when $i \geq 1$ and $P_{j0} = 0$, and $Q_{ik} = B_{ik}$ if $i \geq 1$ and $Q_{0k} = 0$. Let $I'$ be the matrix of all zeros except $I'_{ii} = 1$ for $i \geq 1$, and let $J$ be the matrix consisting of zeros only, except $J_{00} = 1$. We rewrite (\ref{impos2}) as a matrix equation
$$2PQ + P I' + Q I' + I' = \lambda J$$
for some non-zero $\lambda$. However, this is the same as
$$(2P + I')(2Q + I') = 2 \lambda J - I'.$$ 
But comparing ranks we have
$$\operatorname{rank}(2 \lambda J - I') = \operatorname{rank}((2P + I')(2Q + I')) \leq \operatorname{rank}(2P + I') \leq n < n + 1 = \operatorname{rank}(2 \lambda J - I')$$
which is a contradiction. Hence, this case requires a different approach.\\

Finally, we construct the desired maps for this expression. By adding linear terms to $\alpha, \beta,\gamma$, we may assume that the expression is 
\begin{equation}\label{finalExp}
\alpha(x) \beta(y) + (\alpha(x) + c_1x)(\beta(y) + c_2y) + \alpha(x)\gamma(z) + \lambda_1\alpha(x) + \mu_1 x + \lambda_2\beta(y) + \mu_2 y + \lambda_3 \gamma(z) + \mu_3 z
\end{equation}
for some coefficients $c_1, c_2 \in \{-1,1\}, \lambda_1, \lambda_2, \lambda_3 \in \mathbb{N}_0, \mu_1, \mu_2, \mu_3 \in \mathbb{Z}$. Let us begin by observing that in most cases there is a rather simple solution, which strangely we could not generalize to work for all choices of coefficients. Try setting $\alpha(x) = A, \beta(y) = -c_2y + B$, for some constants $A, B$ and suppose we work in $\mathbb{Z}_q$, where $q$ is a product of distinct, arbitrarily large primes (so that all the coefficients and related expressions are coprime with $q$). With these choices, the expression~(\ref{finalExp}) becomes
\begin{equation*}\begin{split}
&A(-c_2y + B) + (A + c_1 x)B + A \gamma(z) + \lambda_1 A + \mu_1 x + \lambda_2(-c_2y + B) + \mu_2 y + \lambda_3 \gamma(z) + \mu_3 z = \\
&y(-c_2A - c_2\lambda_2 + \mu_2) + x(c_1 B + \mu_1) + \gamma(z)(A + \lambda_3) + \mu_3z + (2AB + \lambda_1 A +\lambda_2 B).
\end{split}\end{equation*}
Further, set $B = -\mu_1 c_1$, (recall that $c_1, c_2 \in \{-1, 1\}$ so $c_1^{-1} = c_1, c_2^{-1} = c_2$) so that the coefficient of $x$ above vanishes. We try to pick $A$ such that coefficient of $y$ also becomes zero, setting $A = c_2 \mu_2 - \lambda_2$. If $A + \lambda_3 \not= 0$, then we can pick $\gamma_3$ to cancel the $z$ term, and the expression actually becomes constant. Otherwise, assume that $c_2 \mu_2 - \lambda_2 + \lambda_3 = 0$. In this case, we prove the following proposition, and the full result is then a consequence of a simple number-theoretic calculation.

\begin{proposition}\label{finalPQ} Let $c_1, c_2, \lambda_1, \lambda_2, \lambda_3, \mu_1, \mu_2, \mu_3 \in \mathbb{Z}$ be some fixed coefficients, such that $c_1, c_2 \in \{-1, 1\}$ and $c_2 \mu_2 - \lambda_2 + \lambda_3 - c_2 \not= 0$. Then, for all sufficiently large primes $p, q$, obeying $q < p < 2q$, we may find maps $\alpha, \beta, \gamma\colon\mathbb{Z}_{pq} \to \mathbb{Z}_{pq}$ such that the expression~(\ref{finalExp}) misses at least $p-q$ values.
\end{proposition} 

\begin{proof} As always, $\mathbb{Z}_{pq}$ is viewed as $\mathbb{Z}_p \oplus \mathbb{Z}_q$. In the first coordinate, we set $\alpha_1(x) =  c_2 \mu_2 - \lambda_2 - c_2, \beta_1(y) = -c_2y -\mu_1c_1, \gamma_1(z) = \frac{-\mu_3 z + \delta_1(z) + D}{c_2\mu_2 + \lambda_3 - \lambda_2 - c_2}$, with $\delta_1(z)$ to be chosen and a constant $D$. After a suitable choice of $D$, the first coordinate of the expression becomes
$y_1 - \delta_1(z)$.\\
On the other hand, we shall use the second coordinate to evade some of the values. To this end, we generalize the Lemma~\ref{2varsProb}, with a similar proof. 

\begin{lemma}\label{probLemma2} Let $S$ be a set, and $q$ a prime. Let $f\colon S\to \mathbb{Z}_q$ be any map, and let $c_1, c_2, \lambda_1, \lambda_2, \mu_1, \mu_2 \in \mathbb{Z}$ be any coefficients. Then, provided $|S| q^2 \cdot q! < (q-1)^q$ we may pick $\alpha, \beta_s\colon \mathbb{Z}_q \to \mathbb{Z}_q$ for all $s \in S$, such that 
\begin{equation}\alpha(x) \beta_s(y) + (\alpha(x) + c_1 x)(\beta_s(y) + c_2 y) + \lambda_1 \alpha(x) + \mu_1x  + \lambda_2 \beta_s(y)  +\mu_2 y + f(s)\label{evasionEqn}\end{equation}
never takes value zero.
\end{lemma}

\begin{proof}[Proof of Lemma~\ref{probLemma2}.] We proceed similarly as in the proof of Lemma~\ref{2varsProb}, starting by defining each $\alpha(x)$ independently, uniformly at random in $\mathbb{Z}_q \setminus \{-2^{-1}(c_1x + \lambda_2)\}$, with this single value omitted for technical reasons.\\
For each $y$ and $s \in S$, we want to pick $\beta_s(y)$, so that~(\ref{evasionEqn}) does not vanish for any $x$. Let $E_{y,s}$ be the event that we cannot do this, i.e. that, having fixed $y,s$ for every value $\beta$, we can find $x$ such that  
\begin{equation}\label{prePerm}\alpha(x) \beta + (\alpha(x) + c_1 x)(\beta + c_2 y) + \lambda_1 \alpha(x) + \mu_1x  + \lambda_2 \beta + \mu_2 y + f(s).\end{equation}
If $E_{y,s}$ occurs, observe that (\ref{prePerm}) cannot hold for distinct $\beta_1, \beta_2$ with the same choice of $x$, since this equation can be rewritten as
$$\beta(2\alpha(x) + c_1 x + \lambda_2) + y(c_2 \alpha(x) + c_1 c_2 x + \mu_2) + \lambda_1 \alpha(x) + \mu_1 x + f(s)$$
and by the choice of $\alpha$, the coefficient of $\beta$ is never zero. Hence, if $\pi\colon \mathbb{Z}_q \to \mathbb{Z}_q$ is the map that sends each $\beta$ to the corresponding value of $x$ for which the (\ref{prePerm}) vanishes, we must have $\pi$ injective, which is thus a bijection.\\
Suppose furthermore that we know $\pi$ as well. Note that in this case we can almost determine $\alpha$. Indeed, for all $\beta$ we have
\begin{equation*}\begin{split}
0=& \beta(2\alpha(\pi(\beta)) + c_1 \pi(\beta) + \lambda_2) + y(c_2 \alpha(\pi(\beta)) + c_1 c_2 \pi(\beta) + \mu_2) + \lambda_1 \alpha(\pi(\beta)) + \mu_1 \pi(\beta) + f(s)\\
 =& \alpha(\pi(\beta))(2\beta + yc_2 + \lambda_1) + \beta(c_1 \pi(\beta) + \lambda_2) + y(c_1 c_2 \pi(\beta) + \mu_2) +  \mu_1 \pi(\beta) + f(s)
\end{split}\end{equation*}
Substituting $\beta = \pi^{-1}(\beta')$, we obtain
$$\alpha(\beta')(2\pi^{-1}(\beta') + yc_2 + \lambda_1) + \pi^{-1}(\beta')(c_1 \beta' + \lambda_2) + y(c_1 c_2 \beta' + \mu_2) +  \mu_1 \beta' + f(s) = 0$$ 
for all $\beta' \in \mathbb{Z}_q$, so $\alpha(\beta')$ is uniquely determined for all $\beta'$ such that $2\pi^{-1}(\beta') + yc_2 + \lambda_1 \not=0$, i.e. for $q-1$ values. So there are at most $q$ ways to pick $\alpha$, and in conclusion, the probability of $E_{y,s}$ is $\mathbb{P}(E_{y,s}) \leq q \cdot q!/(q-1)^q$. Finally, we have
$$\mathbb{P}(\cup_{y,s} E_{y,s}) \leq \sum_{y,s} \mathbb{P}(E_{y,s}) \leq |S| q^2 \frac{q!}{(q-1)^q} < 1,$$
so it is possible to choose $\alpha$ for which all other maps can be defined so that (\ref{evasionEqn}) never vanishes.
\end{proof}

Set $\gamma_2 = 0$. Let $\overline{y_1} = \iota_p(y_1), t = \iota_q(\mu_3 z_2) \in \mathbb{Z}$. We define $\delta_1(z) = \pi_p(t)$, so the first coordinate becomes $\pi_p(\overline{y_1} - t)$. We set $f\colon \mathbb{Z}_p \to \mathbb{Z}_q$, by $f(y_1) = \pi_q(\overline{y_1})$. Apply Lemma~\ref{probLemma2} to the $\mathbb{Z}_q$, $S = \mathbb{Z}_p$, and the expression 
$$\alpha_2(x_2) \beta_{2, y_1}(y_2) + (\alpha_2(x_2) + c_1x_2)(\beta_{2,y_1}(y_2) + c_2y_2) + \lambda_1\alpha_2(x_2) + \mu_1 x_2 + \lambda_2\beta_{2,y_1}(y_2) + \mu_2 y_2 + f(y_1)$$  
to define $\alpha_2, \beta_{2,y_1}\colon \mathbb{Z}_q \to \mathbb{Z}_q$ to make it non-zero always. Note that we may apply the lemma since $p q^2 q! < (q-1)^q$, whenever $q < p < 2q$, for sufficiently large $q$. We define $\beta_2(y)$ as $\beta_{2,y_1}(y_2)$. Finally, we show that values $(\pi_p(r), -\pi_q(r)) \in \mathbb{Z}_p \oplus \mathbb{Z}_q$ are not attained for $r \in \{0,1, \dots, p-q-1\}$.\\

Suppose that $r \in \{0,1, \dots, p-q-1\}$ and suppose that the expression takes value $(\pi_p(r), -\pi_q(r))$. Thus, the first coordinate gives $\pi_p(\overline{y_1} - t) = \pi_p(r)$, so $p$ divides $\overline{y_1} - t - r$, so either $\overline{y_1} \leq t + r - p$, $\overline{y_1} = t+r$, or $\overline{y_1} \geq t + r + p$. But, $\overline{y_1} \in \{0,1, \dots, p-1\}, t \in \{0,1, \dots, q-1\}$ and $r \in \{0,1, \dots, p-q - 1\}$, so we must have $\overline{y_1} = t+r$.\\

Next, let $v$ stand for the value of 
$$\alpha_2(x_2) \beta_{2, y_1}(y_2) + (\alpha_2(x_2) + c_1x_2)(\beta_{2,y_1}(y_2) + c_2y_2) + \lambda_1\alpha_2(x_2) + \mu_1 x_2 + \lambda_2\beta_{2,y_1}(y_2) + \mu_2 y_2.$$ 
By the definition of $\alpha_2, \beta_{2, y_1}$, we always have $v + f(y_1) \not = 0$.   If the second coordinate equals $-\pi_q(r)$, then we have $0 = v + \mu_3 z_2 + \pi_q(r) = v + \pi_q(t) + \pi_q(r) = v + \pi_q(t + r) = v + \pi_q(\overline{y_1}) = v + f(y_1)$, which is impossible. \end{proof}

\begin{corollary} Let $c_1, c_2, \lambda_1, \lambda_2, \lambda_3, \mu_1, \mu_2, \mu_3 \in \mathbb{Z}$ be some fixed coefficients, such that $c_1, c_2 \in \{-1, 1\}$ and $c_2 \mu_2 - \lambda_2 + \lambda_3 - c_2 \not= 0$. Let $\epsilon > 0$ be any small real. Then, we can find $q$, a product of arbitrarily large distinct primes and maps $\alpha, \beta, \gamma\colon \mathbb{Z}_q \to \mathbb{Z}_q$ such that the expression~(\ref{finalExp}) takes at most $\epsilon q$ values in $\mathbb{Z}_q$.\end{corollary}

\begin{proof} We proceed as follows. Look at all the primes $2^k < q_1 < q_2 < \dots < q_m < (1 + \frac{1}{3})2^k$ and $(1 + \frac{2}{3})2^k < p_1 < p_2 < \dots < p_n < 2^{k+1}$. For $k$ sufficiently large, by the prime number theorem, $n, m \geq \Omega(2^k/k)$. For $k$ sufficiently large, pairs of primes $p_i, q_i$ satisfy the conditions of Proposition~\ref{finalPQ}, which we apply to obtain $\alpha_i, \beta_i, \gamma_i \colon \mathbb{Z}_{p_i q_i} \to \mathbb{Z}_{p_i q_i}$ so that the expression~(\ref{finalExp}) misses at least $p_i - q_i$ values in $\mathbb{Z}_{p_i q_i}$. In other words, the expression~(\ref{finalExp}) takes at most $(1-\frac{1}{10p_i})p_iq_i$ values in $\mathbb{Z}_{p_iq_i}$. Let $P_k = \{p_1, p_2, \dots, p_{\min\{m,n\}}\}$, and let $Q_k$ be the product of all $p_iq_i$. Viewing $\mathbb{Z}_{Q_k}$ as a direct sum of $\mathbb{Z}_{p_iq_i}$, we can therefore define $\alpha, \beta, \gamma\colon \mathbb{Z}_{Q_k} \to \mathbb{Z}_{Q_k}$ coordinatewise using $\alpha_i, \beta_i, \gamma_i$, so that the expression~(\ref{finalExp}) attains at most $\prod_{p \in P_k}(1-\frac{1}{10p})Q_k \leq \exp(-\frac{c}{k}) Q_k$ values in $\mathbb{Z}_{Q_k}$, for some positive constant $c$.\\ 
Finally, taking $\mathbb{Z}_{Q_k} \oplus \mathbb{Z}_{Q_{k+1}} \oplus \dots \oplus \mathbb{Z}_{Q_N}$, and using the maps $\alpha,\beta,\gamma$ on each $\mathbb{Z}_{Q_i}$ separately, makes the expression~(\ref{finalExp}) take at most $\prod_{i = k}^N \exp(-\frac{c}{i}) = \exp(-c \sum_{i = k}^n \frac{1}{i})$ proportion of values in $\mathbb{Z}_{Q_k} \oplus \mathbb{Z}_{Q_{k+1}} \oplus \dots \oplus \mathbb{Z}_{Q_N}$, which goes to zero as $N$ goes to infinity, as desired.\end{proof} 
This finishes the proof of the Theorem~\ref{3squares}.\end{proof}

\section{Concluding remarks}

We conclude the paper with some problems and several questions related to the intgredients used in our construction. Firstly, the main question here is still the following.

\begin{question} Suppose that $A \subset \mathbb{Z}_q$ has $A-A = \mathbb{Z}_q$ and let $a_k, a_{k-1}, \dots, a_1 \in \mathbb{N}$. How small can $a_k A^k + a_{k-1}A^k + \dots + a_1A$ be? What is the answer when $q$ is square-free/product of $O(1)$ primes/prime? When can we get a power saving, i.e. $|a_k A^k + a_{k-1}A^k + \dots + a_1A| \leq q^{1-\epsilon}$?
\end{question}

The next natural question is about the number of values attained by expressions.

\begin{question} Let $k \in \mathbb{N}$ be given. We consider expressions in variables $x_1, x_2, \dots, x_k$ and maps $\alpha_1(x_1), \alpha_2(x_2), \dots, \alpha_k(x_k)$. Let $E$ be any $\mathbb{N}$-linear combination of products of terms of the form $\alpha_i(x_i)$ or $\alpha_i(x_i) + x_i$. Is there a choice of a $q \in \mathbb{N}$ and maps $\alpha_i \colon \mathbb{Z}_q \to \mathbb{Z}_q$ such that $E$ attains only $o(q)$ values in $\mathbb{Z}_q$? Is there a choice for which we have a power-saving, i.e. $E$ attains only $O(q^{1-\epsilon})$ values? What if $q$ is square-free/product of $O(1)$ primes/prime?\end{question}

We remark that in our construction, there was a power-saving choice for most of the expressions. In fact, the only ones for which our arguments do not lead to a power-saving are
$$\alpha(x)^2 + \alpha(x)\beta(y) + (\alpha(x) + x)(\beta(y) + y) + \lambda_1 \alpha(x) + \mu_1 x + \lambda_2 \beta(y) + \mu_2 y + \lambda_3 \gamma(z) + \mu_3 z$$
and
$$\alpha(x)\gamma(z) + \alpha(x)\beta(y) + (\alpha(x) + x)(\beta(y) + y) + \lambda_1 \alpha(x) + \mu_1 x + \lambda_2 \beta(y) + \mu_2 y + \lambda_3 \gamma(z) + \mu_3 z,$$
(for a specific choice of $\lambda_i, \mu_i$).\\

Returning once again to the identification of coordinates idea, it turns out that Proposition~\ref{basicdiffmoduliadd} is nearly optimal for some expressions, provided $p$ and $q$ are close. Namely, consider expression $E = \alpha'(x) \beta'(y) + (\alpha'(x) + x) + (\beta'(y) + y) + 1$. Putting $\alpha(x) = \alpha'(x) + 1, \beta(y) = \beta'(y) + 1$, the expression becomes $E = \alpha(x) \beta(y) + x + y$. 

\begin{observation}Let $p$ and $q$ be distinct primes. Given any maps $\alpha, \beta\colon \mathbb{Z}_{pq} \to \mathbb{Z}_{pq}$, the expression $\alpha(x) \beta(y) + x + y$ attains at least $\Omega(\min\{p,q\})$ values in $\mathbb{Z}_{pq}$. \end{observation}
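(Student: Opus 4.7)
The plan is to identify $\mathbb{Z}_{pq}$ with $\mathbb{Z}_p \oplus \mathbb{Z}_q$ via the Chinese Remainder Theorem and then analyse the image by cases on the divisibility structure of the values $\beta(y)$. Write $E(x,y) = \alpha(x)\beta(y) + x + y$ and $E_y(x) = E(x,y)$; any lower bound on a single $|\operatorname{Im} E_y|$, or on the size of the joint image of a pair $(E_{y_1}, E_{y_2})$, will suffice since these images embed into $\operatorname{Im} E$ or $\operatorname{Im} E \times \operatorname{Im} E$. Assume without loss of generality $p \leq q$, so the goal is $|\operatorname{Im} E| \geq p$ (in fact the argument will give $\geq p$ on the nose).

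\textbf{Case 1: some $\beta(y^*)$ is a zero divisor or zero.} If $p \mid \beta(y^*)$ then $\pi_p(E_{y^*}(x)) = \pi_p(x) + \pi_p(y^*)$, which hits all of $\mathbb{Z}_p$ as $x$ varies, so $|\operatorname{Im} E| \geq p$. Symmetrically, if $q \mid \beta(y^*)$ we get $|\operatorname{Im} E| \geq q \geq p$. Hence from now on every $\beta(y)$ is a unit in $\mathbb{Z}_{pq}$.

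\textbf{Case 2: some pair $y_1 \neq y_2$ has $\beta(y_1) - \beta(y_2) \in \mathbb{Z}_{pq}^\times$.} The map $x \mapsto (E_{y_1}(x), E_{y_2}(x))$ is injective, because solving the $2 \times 2$ linear system in the unknowns $\alpha(x), x$ requires inverting a matrix of determinant $\beta(y_1) - \beta(y_2)$, which is invertible by assumption. Thus $pq = |\mathbb{Z}_{pq}| \leq |\operatorname{Im} E_{y_1} \times \operatorname{Im} E_{y_2}| \leq |\operatorname{Im} E|^2$ and $|\operatorname{Im} E| \geq \sqrt{pq} \geq p$.

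\textbf{Case 3: every difference $\beta(y_1) - \beta(y_2)$ is a non-unit.} Then for any two $y_1, y_2$ the images $(\pi_p \beta(y_1), \pi_q \beta(y_1))$ and $(\pi_p \beta(y_2), \pi_q \beta(y_2))$ in $\mathbb{Z}_p \times \mathbb{Z}_q$ agree in at least one coordinate. A short combinatorial lemma then forces the whole family $\{(\pi_p\beta(y), \pi_q\beta(y)) : y \in \mathbb{Z}_{pq}\}$ to be constant in one of the two coordinates: indeed, given two members agreeing (say) only in the first coordinate, any third member must match each of them in some coordinate, and the two second coordinates being distinct forces the third point to match in the first coordinate as well. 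If all $\pi_p(\beta(y))$ equal some fixed $\bar a \in \mathbb{Z}_p^\times$, then $\pi_p(E(x,y)) = \bar a \,\pi_p(\alpha(x)) + \pi_p(x) + \pi_p(y)$; fixing any $x$ and letting $y$ vary makes $\pi_p(y)$ range over all of $\mathbb{Z}_p$, so $\pi_p(\operatorname{Im} E) = \mathbb{Z}_p$ and $|\operatorname{Im} E| \geq p$. The case of constant $\pi_q\beta(y)$ is symmetric and yields $|\operatorname{Im} E| \geq q \geq p$. The only substantive step is the combinatorial lemma isolating Case 3; the remaining steps are routine CRT plus a single linear-algebra computation over $\mathbb{Z}_{pq}$.
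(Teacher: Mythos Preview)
Your proof is correct and takes a genuinely different route from the paper's. The paper assumes all $\alpha(x)$ are units (the analogue of your Case~1 for $\alpha$), then fixes $x$, partitions $\mathbb{Z}_{pq}$ into level sets $Y_i$ of $y \mapsto E(x,y)$, and double-counts ``invertible pairs'' $\{y_1,y_2\}$ (those with $y_1 - y_2$ a unit) lying in a common $Y_i$: each such pair pins down $\alpha(x)$, so summing over the values of $\alpha$ and comparing with the total number of pairs gives a contradiction if the image is too small. This needs an auxiliary split on whether $\alpha$ attains many or few values, and only yields $\Omega(\min\{p,q\})$ with an unspecified constant.

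Your argument replaces all of this with a structural dichotomy on $\beta$: either some $\beta(y_1)-\beta(y_2)$ is a unit, in which case the injectivity of $x \mapsto (E_{y_1}(x), E_{y_2}(x))$ gives $|\operatorname{Im} E| \geq \sqrt{pq}$ in one line, or the ``any two agree in a coordinate'' lemma forces $\pi_p\circ\beta$ or $\pi_q\circ\beta$ to be constant, after which a single projection is surjective. This avoids the counting entirely, is shorter, and delivers the sharp constant $|\operatorname{Im} E| \geq \min\{p,q\}$. The paper's approach, on the other hand, is more robust in the sense that the invertible-pairs count generalises more readily to expressions where the clean $2\times 2$ linear system is unavailable.
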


\begin{proof} We begin by observing that if $\alpha(x)$ is not invertible for some choice of $x$, viewing $\mathbb{Z}_{pq}$ as $\mathbb{Z}_p \oplus \mathbb{Z}_q$, for some coordinate $c \in \{1,2\}$, we have $E_c = x_c + y_c$. Letting $y_c$ vary, we obtain at least $\min\{p,q\}$ values.\\
\indent Therefore, assume that all $\alpha(x)$ are invertible in $\mathbb{Z}_{pq} \cong \mathbb{Z}_p \oplus \mathbb{Z}_q$. Fix some $x$. Consider all values $v_1, v_2, \dots, v_r$ of $E(x,y)$, (where $E(x,y)$ is evaluation of the expression for the given choice of $x,y$), as $y$ ranges over $\mathbb{Z}_{pq}$. We may assume $r \leq \frac{1}{10}\min\{p,q\}$, otherwise we are done. Hence, we obtain a partition $Y_1 \cup Y_2 \cup \dots \cup Y_r = \mathbb{Z}_{pq}$, where $E(x,y) = v_i$ if $y \in Y_i$. Call a pair $y_1, y_2$ \emph{invertible} if $y_1 - y_2$ is invertible in $\mathbb{Z}_{pq}$. Observe that in each set $Y_i$, there are at least $\max\{|Y_i| (|Y_i| - p - q + 1) / 2, 0\}$ invertible pairs. However, if $E(x,y_1) = E(x,y_2)$ for an invertible pair $y_1,y_2$, then $\alpha(x) \beta(y_1) + y_1 = \alpha(x) \beta(y_2) + y_2$, so $\beta(y_1) - \beta(y_2)$ is invertible, and $\alpha(x) = \frac{y_1 - y_2}{\beta(y_2) - \beta(y_1)}$. Thus, for every invertible pair $y_1, y_2$ there is a value $w(y_1, y_2)$ such that $E(x,y_1) = E(x,y_2)$ implies $\alpha(x) = w(y_1, y_2)$.\\
For a fixed $w$, take $x$ such that $\alpha(x) = w$, and consider the partition $Y_1 \cup \dots \cup Y_r = \mathbb{Z}_{pq}$ as above. Firstly, take $R$ to be the set of indiced $i$ such that $|Y_i| \geq 2(p+q)$. Thus, $\sum_{i \notin R} |Y_i| < r \cdot 2(p+q) \leq \frac{1}{5}\min\{p,q\} (p+q) \leq \frac{2}{5} pq$. Hence, $\sum_{i \in R} |Y_i| > \frac{3}{5}pq$. Therefore, we obtain that the number of invertible pairs $\{y_1, y_2\}$ that have value $w(y_1, y_2) = \alpha(x) = w$ is at least
$$\sum_{i=1}^r \max\{|Y_i|(|Y_i| - p - q + 1)/2, 0\} \geq \sum_{i \in R} |Y_i|(|Y_i| - p - q + 1)/2 \geq \sum_{i \in R} |Y_i| (p+q) /2 \geq \frac{3}{10} pq(p+q).$$
\indent If $\alpha$ attains at most $2(p+q)$ values, we simply consider $E(x,y)$ for fixed $y$. The expression then attains at least $pq/2(p+q)$ values, thus the claim follows, so we may assume that $\alpha$ attains more than $2(p+q)$ values. But then, for every value $w$ of $\alpha$, we have at least $\frac{3}{10} pq(p+q)$ invertible pairs $\{y_1, y_2\}$ with $w(y_1, y_2) = w$, so the total number of invertible pairs is at least $\frac{3}{10} pq(p+q) \cdot 2(p+q) > p^2q^2$, which is a contradiction.\end{proof}

It could be interesting to better understand the minimum image size for this expression. Furthermore, recall that in the case of prime modulus, we only achieved that $E$ is not surjective.

\begin{question} Let $\alpha,\beta\colon \mathbb{Z}_p \to \mathbb{Z}_p$ be maps and $p$ prime. What is the smallest number of values that the expression $\alpha(x) \beta(y) + x + y$ must attain? \end{question}

Finally, we pose the question of improving the bounds in Lemma~\ref{singleVarLemma}.

\begin{question} Suppose that $c_1, c_2, \dots, c_d$ are never simultaneously zero. How large a set $F$ can we take? \end{question}


\begin{thebibliography}{4}
\bibitem{Bcomp} J. Bourgain, The sum-product theorem in $\mathbb{Z}_q$ with $q$ arbitrary, Journal d'Analyse Math\'ematique 106.1 (2008): 1--93.
\bibitem{BKT} J. Bourgain, N. Katz, T. Tao, A sum-product estimate in finite fields, and applications, Geometric and Functional Analysis 14.1 (2004): 27--57.
\bibitem{FreimanBounds} G.A. Freiman, V.P. Pigaev, The relation between the invariants R and T (in Russian), Kalinin. Gos. Univ. Moscow, (1973), 172-174
\bibitem{Glib} A.A. Glibichuk, Sums of powers of subsets of an arbitrary finite field,  Izv. Math. 75 253, 2011
\bibitem{Gowers} W.T. Gowers, A new proof of Szemeredi's theorem, GAFA 11 (2001), 465--588
\bibitem{Haight} J.A. Haight, Difference covers which have small k-sums for any k, Mathematika 20 (1973), 109-118
\bibitem{LowerBounds} F. Hennecart, G. Robert, A. Yudin, On the number of sums and differences, Ast\'{e}risque \textbf{258}, 1999, p. 173--178
\bibitem{Ivic} A. Ivi\'{c}, \emph{The Riemann Zeta-Function. Theory and Applications. Dover Publications}, New York, 2003, reprint of the 1985 original
\bibitem{Nathanson} M.B. Nathanson, The Haight-Ruzsa Method for Sets with More Differences Than Multiple Sums, https://arxiv.org/abs/1604.03015
\bibitem{Plunnecke} H. Pl\"{u}nnecke, Eine zahlentheoretische anwendung der graphtheorie. J. Reine Angew. Math., 243:171--183, 1970
\bibitem{RuzsaBounds} I. Z. Ruzsa, On the cardinality of $A + A$ and $A - A$, in: Coll. Math. Soc. Bolyai, 18 Combinatorics (Keszthely, 1976), Akad\'{e}miai Kiad\'{o} (Budapest, 1979), pp. 933--938.
\bibitem{RuzsaIneq}  I.Z. Ruzsa, An application of graph theory to additive number theory. Scientia, Ser. A, 3:97-109, 1989
\bibitem{Ruzsa1} I.Z. Ruzsa, More Differences Than Multiple Sums, https://arxiv.org/abs/1601.04146
\bibitem{Vaughan} R.C. Vaughan, \emph{The Hardy-Littlewood Method (2nd ed.)}, Cambridge Tracts in Mathematics 125, CUP 1997
\end{thebibliography}
\end{document}